\documentclass[reqno,a4paper,11pt]{amsart}
\usepackage[utf8]{inputenc}
\usepackage{amsmath}
\usepackage{amssymb}
\usepackage{mathtools}
\usepackage{amsthm}

\usepackage{hyperref}

\usepackage{accents}

\newtheorem{theorem}{Theorem}[section]
\newtheorem{corollary}[theorem]{Corollary}
\newtheorem{lemma}[theorem]{Lemma}
\newtheorem{proposition}[theorem]{Proposition}

\theoremstyle{definition}
\newtheorem{definition}[theorem]{Definition}

\title{Tomography of $1$-forms on a gas giant}
\author{Joonas Ilmavirta}
\thanks{Department of Mathematics and Statistics, University of
  Jyv\"askyl\"a, Finland. \texttt{joonas.ilmavirta@jyu.fi}}
\author{Antti Kykkänen}
\thanks{Department of Computational Applied Mathematics and Operations Research, Rice University, Houston,
TX, USA \texttt{ak272@rice.edu}}
\author{Eetu Satukangas}
\thanks{Department of Mathematics and Statistics, University of
  Jyv\"askyl\"a, Finland. \texttt{eetu.a.e.satukangas@jyu.fi}}
\date{\today}

\newcommand{\R}{{\mathbb R}}
\newcommand{\N}{{\mathbb N}}
\newcommand{\abs}[1]{\left\lvert#1\right\rvert}

\newcommand{\der}{\mathrm{d}}
\newcommand{\Der}{\mathrm{D}}

\newcommand{\Ver}{\mathcal{V}}
\newcommand{\Hor}{\mathcal{H}}

\newcommand{\Zeta}{\mathcal{Z}}

\newcommand{\eps}{\varepsilon}

\newcommand{\ip}[2]{\left\langle#1,#2\right\rangle}
\newcommand{\teksti}[1]{\quad \text{{#1}}\quad }

\newcommand{\Order}{\mathcal{O}}

\newcommand{\grad}[1]{\overset{\mathrm{#1}}{\nabla}}
\newcommand{\gradv}{\grad{v}}
\newcommand{\gradh}{\grad{h}}

\newcommand{\Vol}{\mathrm{Vol}}

\newcommand{\norm}[1]{\Vert #1 \Vert}

\usepackage{caption} 
\begin{document}

\begin{abstract}
We show that on gas giant manifolds the geodesic X-ray transform is solenoidally injective on one-forms that are smooth up to the boundary in an appropriate smooth structure.
A gas giant manifold is a conformally blown up Riemannian manifold whose boundary singularity is milder than asymptotically hyperbolic.
The proof is based on a Pestov identity and asymptotic analysis of short geodesics.
\end{abstract}

\maketitle

\section{Introduction}
\label{sec:intro}

Let $(M,\bar g)$ a compact Riemannian manifold with boundary, and $\rho\colon M\to[0,\infty)$ a boundary defining function.
The corresponding \emph{gas giant manifold} is the same manifold $M$ equipped with the metric $g=\rho^{-1}\bar g$.

The propagation of acoustic waves on a gas giant planet is well modeled by the geodesic flow of a gas giant manifold as explained in~\cite{dHIKM2024}.
A natural question in inverse problems, the mathematics of indirect measurements, is to find the flow field of the gas from the associated Doppler effect on travel times of acoustic waves.
Linearizing this problem at zero flow leads to the ray transform of vector fields~\cite{Schuster:Doppler}.
We set out to study this problem in gas giant geometry.

We denote the geodesic X-ray transform on one-forms by $I$.
Vector fields can be identified with one-forms in the interior, but at the boundary the identification provided by the metric tensor blows up.
There are two natural choices of smooth structures on the manifold, one corresponding to boundary normal coordinates of~$\bar g$ and the other to those of~$g$.
We use the latter one, and smoothness of functions and tensor fields is understood with respect to this structure.
See section~\ref{subsec:choice-of-structure} for details.

\begin{theorem}
\label{thm:injectivity-mod-gauge}

Let~$(M,g)$ be a non-trapping\footnote{Non-trapping means that all geodesics starting from the interior reach the boundary in finite time.} gas giant with everywhere non-positive curvature. For a smooth (up to the boundary)~$1$-form~$f$ on~$M$ the following are equivalent:
\begin{enumerate}
    \item The geodesic X-ray transform of~$f$ vanishes.
    \item There is a smooth function~$p \in C^\infty(M)$ with~$p|_{\partial M}=0$ so that~$f = \der p$.
\end{enumerate}
\end{theorem}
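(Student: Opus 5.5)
The implication (2)$\Rightarrow$(1) is immediate, and I would dispose of it first. For a maximal geodesic $\gamma\colon[0,\tau]\to M$ with endpoints on $\partial M$ we have $\int_\gamma \der p = p(\gamma(\tau)) - p(\gamma(0)) = 0$, because $p|_{\partial M}=0$. Since the geodesics of $g$ reach the boundary in finite time (non-trapping), this requires only that $p$ extends continuously to $\partial M$.

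For (1)$\Rightarrow$(2) I would follow the Pestov identity approach. Set $u(x,v)=\int_0^{\tau(x,v)} f(\dot\gamma_{x,v}(t))\,\der t$ on the unit sphere bundle $SM$ of $g$. Then $Xu=-f$, viewing $f$ as a function on $SM$ that is linear in $v$, and $u|_{\partial SM}=0$ because $If=0$.

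The first key step is regularity. I would show that $u$ is smooth in the interior and has enough decay and regularity at $\partial M$, in the chosen smooth structure of section~\ref{subsec:choice-of-structure}, for the integrations by parts in the Pestov identity to produce no boundary terms. This is where the asymptotic analysis of short geodesics near $\partial M$ enters. In boundary normal coordinates of $g$, geodesics starting near the boundary in nearly tangential directions stay in a thin collar. Their length $\tau$ and the integral of $f$ along them should be expanded in the distance to the boundary and the tangential angle. Using $If=0$ and smoothness of $f$ up to the boundary, I expect $u$ and its vertical and horizontal derivatives to vanish to a sufficient order at $\partial SM$. The required order is that the terms $\ip{\gradv u}{\gradh u}$ integrated over $\partial SM$ against the volume form vanish. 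This is made harder by the volume form of $g$ blowing up like a power of $\rho$.

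I expect this to be the main obstacle. The gas giant metric is incomplete, and its boundary behaviour is neither smooth compact nor asymptotically hyperbolic. One therefore has to verify carefully, via the short-geodesic asymptotics and the decay of $\rho$-weights, that all boundary terms vanish and that every integral appearing is finite.

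Granted this, the Pestov identity applies:
\[
\norm{\gradv Xu}^2 = \norm{X\gradv u}^2 - (K\gradv u,\gradv u) + (n-1)\norm{Xu}^2 .
\]
Combining it with the decomposition of $Xu=-f$ into degree one spherical harmonics gives the standard inequality $\norm{X\gradv u}^2-(K\gradv u,\gradv u)\le 0$. Under non-positive curvature this forces $X\gradv u=0$, and with the boundary vanishing, $\gradv u=0$. Hence $u=-p(x)$ depends only on the base point. Then $Xu=-f$ gives $f=\der p$ in the interior, and $p|_{\partial M}=0$ follows from $u|_{\partial SM}=0$.

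Finally, I would show that $p$ is smooth up to the boundary. In the collar I would integrate $f=\der p$ along the normal geodesics of $g$ from the boundary, which writes $p$ as an integral of the smooth normal component of $f$. This gives smoothness in the $g$-structure and confirms $p|_{\partial M}=0$.
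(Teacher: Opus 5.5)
There is a genuine gap in your regularity step. You expect that $If=0$ together with smoothness of $f$ forces $u^f$ and its derivatives to vanish to high order at the boundary. This is false in general.

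Take $f=\der p$ with $p|_{\partial M}=0$ and $p=x$ in a collar. Then $If=0$, yet $u^f=-p$ vanishes only to first order. Moreover, $Xu^f=-\lambda \der p=-\xi$ does not vanish at all near $\partial M$. Since $\der V_g=x^{-(n-1)}\der x\,\der V_h$ and $\Vol_g(M)=\infty$, already $\norm{Xu^f}^2=\infty$, and likewise $\norm{\gradv Xu^f}^2=\infty$. So the Pestov identity carries no information for this $u^f$, quite apart from the boundary terms.

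The potential part of $f$ is invisible to $I$, and it is exactly what destroys decay. No argument based only on $If=0$ and smoothness can give you the decay you need for $u^f$ itself. To pass to the limit from the truncations $M_\varepsilon=\{x\ge\varepsilon\}$, the paper needs $f\in x^kC^\infty$ with $k\ge n+1$. This yields $u^f=\Order(x^{k+1})$, $\abs{\gradv u^f}_g^2,\abs{\gradh u^f}_g^2=\Order(x^{2k-2})$ and $\abs{\gradv Xu^f}_g^2,\abs{X\gradv u^f}_g^2=\Order(x^{2k-4})$.

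The missing idea is to fix the gauge \emph{before} applying the Pestov identity. Subtract $\der q$ with $q(x,y)=\chi(x)\int_0^x f_0(s,y)\,\der s$. Then $q|_{\partial M}=0$, $I\der q=0$, and $\tilde f=f-\der q$ has vanishing normal component near $\partial M$.

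Next, use the short geodesics issued from $(0,y)$ with covector $(s,\eta)$, $\abs{\eta}_h=1$, $s\to0$. These stay in $\{x\le Cs\}$ for a uniformly bounded parameter time. Use them to show inductively that $\partial_x^l\tilde f_i(0,y)=0$ for all $l$. If $l$ is the first non-vanishing order, the integrand of $I\tilde f$ along these geodesics is $\frac{1}{l!}x(t)^{2+l}h^{ij}(0,y(t))\,\partial_x^l\tilde f_i(0,y(t))\,\eta_j(t)+\Order(x(t)^{3+l})$. This has a fixed sign for suitable $\eta$, contradicting $I\tilde f=0$.

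Only then is $\tilde f\in x^\infty C^\infty$. Now $u^{\tilde f}$ has the decay above, the identity converges, and you get $u^{\tilde f}=p$, $\tilde f=-\der p$, hence $f=\der(q-p)$. Your last step, integrating the normal component of $f$ along normal geodesics, is essentially this gauge function $q$. But it must be used at the start of the argument, not as an afterthought.
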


The proof is based on a Pestov identity.
In order to be able to use this energy identity on a singular manifold, we need to ensure that the appropriate functions are integrable.
This is achieved by a high-order boundary determination based on asymptotic analysis of short geodesics.

\subsection{Choice of structures}
\label{subsec:choice-of-structure}

Our main results concern inverse problems in gas giant geometry.
A secondary point is the importance of the choice of structure, including a smooth structure, on our singular manifold.

There are two natural choices for the smooth structure of a gas giant manifold. One is rather obvious and gives an adequate description for the geodesic flow, and the other one, while less obvious, gives the cleanest description of the geodesic flow of the manifold on the cotangent bundle of the manifold.

The two choices of coordinates are best illustrated by considering a simple example. Let $\bar g=\der s^2 + \der y^2$ be the standard Euclidean metric in the half-plane~$\{(s,y):s>0\} \subset \R^2$ and consider the gas giant metric
\begin{equation}
\label{eqn:smooth-structure-1}
g = \frac{\der s^2 + \der y^2}{s}.
\end{equation}
The smooth coordinates~$(s,y)$ corresponds to a choice of smooth structure for the manifold (the obvious one), where~$s$ measures the~$\bar g$-distance to the boundary. However, the coordinate change~$s = x^2$ brings the metric essentially into the form
\begin{equation}
\label{eqn:smooth-structure-2}
g = \der x^2 + \frac{\der y^2}{x^2}.
\end{equation}
The mapping~$s \mapsto x^2$ is smooth, but its inverse~$x \mapsto s^{1/2}$ is singular at~$s = 0$. Hence the change of coordinates corresponds to introduction of a second smooth structure, where~$x$ measures the~$g$-distance to the boundary. This choice of smooth structure is less obvious at the first glance, but turns out to be cleaner for the analysis of the geodesic flow on the cotangent bundle.

Let the dual coordinates --- the coordinates induced on the cotangent bundle --- corresponding to~$(s,y)$ be denoted by~$(\xi_s,\eta)$ and similarly the dual coordinates for~$(x,y)$ be denoted by~$(\xi_x,\eta)$. The geodesic flow on the cotangent bundle is then given as the bicharacteristics of the Hamiltonian that is written as $H(s,y,\xi_s,\eta)=\frac{1}{2}(s\xi_s^2 +s\eta^2)$ or $H(x,y,\xi_x,\eta)=\frac{1}{2}(\xi_x^2+x^2\eta^2)$ in the corresponding choice of smooth structure. In table~\ref{tab:dot-xi} we have collected the second order evolutions of the normal component of geodesics for the two choices of the smooth structure, both in the  tangent and cotangent bundle versions of the geodesic flow. The main take away is that only in the~$g$-boundary normal structure and in the Hamiltonian formalism of the geodesic flow, the evolution is non-singular up to the boundary~$x = 0$. This indicates clearly that the best choice for the smooth structure is the one with smooth coordinates~$(x,y)$ and the geodesic flow on the cotangent bundle.

\begin{table}[h]
    \centering
    \begin{tabular}{|c|c|c|}
        \hline  & tangent bundle $TM$ & cotangent bundle $T^* M$ \\ \hline\hline
        $\bar g$-normal coordinates & $\ddot s = \frac{1}{2s}\big((\dot s)^2 + (\dot y)^2\big)$ & $\dot \xi_s = -\frac{1}{2}(\xi_s^2 + \eta^2) = -\frac{1}{s}H(s,y,\xi_s,\eta)$ \\ \hline
        $g$-normal coordinates & $\ddot x = \frac{1}{x^3}(\dot y)^2$ & $\dot \xi_x = -x\eta^2$ \\ \hline
    \end{tabular}
    \caption{The rows of the table correspond to the two possible choices of smooth structures for the gas giant. In the structure of the first row the coordinates~$(s,y)$ are smooth and in the structure of the second row the coordinates~$(x,y)$ are smooth (see~\eqref{eqn:smooth-structure-1} and~\eqref{eqn:smooth-structure-2}). On the columns, we have computed normal to the boundary components of the geodesic flow on tangent and cotangent bundle. The take away is that the only choice to have non-singular flow is to choose the smooth structure~$(x,y)$ and to consider the geodesic flow on the cotangent bundle.}
    \label{tab:dot-xi}
\end{table}

In~\cite{dHIKM2024}, the authors use the polytropic equation of state coupled with the Lane-Emden equation to derive a law that the metric on a gas giant is asymptotically proportional to~$\rho^{-1}$, where~$\rho$ represents the physical distance to the surface. Using this computation as a guideline the authors then defined a more general Riemannian model for gas giant geometry by
\begin{equation}
g = \frac{\bar g}{\rho^\alpha}
\end{equation}
where~$\alpha$ is a parameter in~$(0,2)$. This provides a more general and therefore a more flexible framework to develop the geometry in, but nevertheless the physical (predicted by the polytropic model) blow-up rate of the geometry is~$\rho^{-1}$. In the current article, we chose to take~$\alpha = 1$, which makes certain smoothness considerations simpler, while still remaining true for the physicality of the model. For example, in table~\ref{tab:dot-xi}, fractional powers of $x$ and $s$ would appear when~$\alpha$ is irrational, which leads to certain non-smoothness of the geodesic flow at the boundary in both possible choices for the smooth structure and for both tangent and cotangent bundle versions of the flow.

The proofs of the lemmas we present in section~\ref{sec:proof-of-the-main-result} are not valid as such for general $\alpha\in(0,2)$ due to singularities in the geodesic vector field and other quantities.

\subsection{Related results}

Our article continues the long tradition of studying inverse problems in integral geometry on Riemannian manifolds with a boundary.
The very origin of integral geometry is in the work of Funk~\cite{Funk1913} on the round sphere, but manifolds with boundary followed much later.
The original problem for the X-ray transform of functions and the method of Pestov identities can be traced back to the seminal work of Mukhometov~\cite{Mukhometov1975,Mukhometov1977,Mukhometov1981} and the problem for~$1$-forms at least back to~\cite{AR1997}.
The method of Pestov identities which is in use in this article too has been well-documented in several books and surveys (see e.g.~\cite{IM2019,PSU2014,PSU2023,Sharafutdinov1994}). As the state of art on modern tensor tomography on manifold with boundaries we mention~\cite{PSU2013} where it was shown that the X-ray transform of~$2$-tensor fields is solenoidally injective on simple Riemannian surfaces. The problem remains open on general simple manifolds in dimensions~$3$ and higher.

One of the more recent developments in ray tomography on manifolds has been to replace the standard Riemannian metric of the manifold with an asymptotically hyperbolic one.
In simplified terms, this is related to the setting of gas giants in that an asymptotically hyperbolic metric is of the form~$g = \rho^{-2}\bar g$ whereas a gas giant metric is~$g = \rho^{-1}\bar g$. The X-ray transform is relatively well-understood on asymptotically hyperbolic manifolds (see~\cite{Eptaminitakis2022,EG2021,EMZ2025a,EMZ2025b,GGSU2019}) alongside with several other inverse problems in asymptotically hyperbolic geometry (see e.g.~\cite{GSaB2009,Lefeuvre2020,MarxKuo2025}).
Ray transforms have also been studied on non-compact non-hyperbolic Cartan--Hadamard manifolds~\cite{LRS2018}.

The geometric setting of our article lies somewhere in between standard Riemannian geometry ($\alpha = 0$) and asymptotically hyperbolic geometry ($\alpha = 2$), but in certain regards the gas giant geometry differs drastically from the cases~$\alpha = 0$ and $\alpha = 2$. For example, there are no constant curvature gas giants as was pointed out in~\cite{dHIKM2024} which implies that our geometry~$\alpha = 1$ is not simply an interpolation between the edge cases. The gas giant setting arose in~\cite{dHIKM2024} as the geometric setting for studying wave propagation on gas giant planets. Inverse problems, particularly the X-ray transform, has not been exhaustively studied in gas giant geometry, the only available result being the injectivity for the transform of scalar function~\cite{dHIKM2024}. The analysis of the eigenfunctions of the Laplace--Beltrami operator has seen more development (see~\cite{dVDdHT2024,Dietze2025,DR2024}).

A gas giant manifold is similar to a Grushin manifold~\cite{DR2024}.
Geodesics on Grushin manifolds have been studied as sub-Riemannian geodesics~\cite{ABG2025,Borza2022,BL2013,CC2009}.
This geometry arises also in quantum mechanics~\cite{BQ2023}.
The relations between the two smooth structures have also been explored in this setting~\cite{Romney2016}.
To the best of our knowledge our results on gas giant geodesics do not follow from the Grushin literature.

\subsection*{Acknowledgements}
J.I. was supported by the Research Council of Finland
(Flagship of Advanced Mathematics for Sensing Imaging and Modelling grant 359208; Centre of Excellence of Inverse Modelling and Imaging grant 353092; and other grants 351665, 351656, 358047, 360434) and a Väisälä project grant by the Finnish Academy of Science and Letters.
E.S. was supported by the Finnish Ministry of Education and Culture’s Pilot for Doctoral Programmes (Pilot project Mathematics of Sensing, Imaging and Modeling), the Research Council of Finland
(Flagship of Advanced Mathematics for Sensing Imaging and Modelling grant 359208; Centre of Excellence of Inverse Modelling and Imaging grant 353092), and the same Väisälä project grant as the first author.
A.K. was supported by the Geo-Mathematical Imaging Group at Rice University.

\section{Proof of the main result}
\label{sec:proof-of-the-main-result}

The setting of the proof is the geodesic flow on the cotangent bundle $T^*M$.
We mostly restrict the flow to the ``unit cosphere bundle'' $S^*M\subset T^*M$, which in gas giant geometry is a submanifold but not a subbundle up to the boundary.
The generator of this flow is the geodesic vector field~$X$.
Details can be found in section~\ref{sec:preliminaries}.

We prove the main result by using the Pestov identity~\eqref{eqn:Pestov full} on the transport equation $Xu^f=-\lambda f$ where $f$ is a 1-form with $If=0$ and the integral function $u^f\colon S^* M\to \R $ is the unique solution to the transport equation given by
\begin{equation}
\label{eqn:integral_function}
    u^f (z,\zeta)
    :=
    \int_0^{\tau (z,\zeta)} \ip{f(\gamma(t))}{\dot{\gamma}(t)}
    \der t
\end{equation}
for any lifted geodesic $(\gamma(t),\dot{\gamma}(t))\in TM$ given by the initial condition $(z,\zeta)\in S^* M$; see section~\ref{sec:preliminaries} below for more details. To use the Pestov identity on a gas giant we have to assume various regularity properties which we describe as a function space $\Omega$ as follows:

\begin{definition}[Function space $\Omega$]
\label{def:niceness-of-uf}
Let $u: S^* M\to \R$ be continuous. We say that $u\in \Omega$ if all the following hold:
\begin{enumerate}
\item
$u\in x^\infty C^\infty (S^* M^\circ)$
\item
$Xu\in C^\infty(S^* M)$
\item
$\nabla_{S^* M} u\in x^\infty L^\infty (S^* M, TS^* M)$
\item
$\gradv Xu, X\gradv u\in L^2 (N )$
\item
$u|_{\partial S^* M}=\gradv u|_{\partial S^* M}=0$
\end{enumerate}
See section~\ref{sec:preliminaries} for details on bundles, sections, and gradients.
\end{definition}

We prove the main theorem~\ref{thm:injectivity-mod-gauge} using the lemmas listed below.

\begin{lemma}[proved in section~\ref{sec:Boundary determination}]
\label{keylemma:bdy-det}
Let $(M,g)$ be a gas giant and let $f$ be a smooth 1-form on $M$. If $If=0$ then there exists $q\in C^\infty(M)$, supported near $\partial M$ with $q|_{\partial M}=0$, so that  $f-\der q\in x^\infty  C^\infty(M,T^* M)$.
\end{lemma}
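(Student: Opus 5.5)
We work in $g$-boundary normal coordinates $(x,y)$ near $\partial M$, so that $g=\der x^2+x^{-2}h(x,y,\der y)$ with $h$ a smooth family of metrics on $\partial M$, and write a smooth $1$-form as $f=a(x,y)\,\der x+b(x,y)\,\der y$. The plan has two parts.

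\textbf{Gauge step.} Given $x^\infty$-vanishing of the normal component, the tangential Taylor coefficients are killed by the boundary data. We first remove the normal component. Set
\begin{equation*}
q_0(x,y)=\chi(x)\int_0^x a(t,y)\,\der t ,
\end{equation*}
with $\chi$ a cutoff near $\partial M$. Then $q_0|_{\partial M}=0$, and $f-\der q_0$ has vanishing $\der x$-component near the boundary. Replacing $f$ by $f-\der q_0$ leaves $If$ unchanged, since $I\der q_0=0$ whenever $q_0|_{\partial M}=0$.

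So we may assume $f=b(x,y)\,\der y$ near $\partial M$. It then suffices to show that $b$ vanishes to infinite order at $x=0$: in that case $f$ is already in $x^\infty C^\infty$, and we take $q=q_0$. This is not fully automatic, because a gauge by $\der q$ with $q=O(x)$ still contributes tangential terms $\der_y q$. To handle this, argue order by order. Suppose $b=x^k b_k(y)+O(x^{k+1})$. Adding $\der\bigl(x^{k+1}c(y)\bigr)$ only changes $b$ at order $x^{k+1}$, so the gauge freedom cannot kill $b_k$, and the goal becomes to show directly that $b_k=0$.

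\textbf{Short-geodesic asymptotics.} Consider geodesics that start at a boundary point $y_0$, hit it almost tangentially, and stay within height $x\le\eps$. Use the Hamiltonian $H=\tfrac12\bigl(\xi_x^2+x^2 h^{-1}(\eta,\eta)\bigr)$, which is smooth up to the boundary. For fixed $|\eta|_h$ and small $\xi_x$ at the boundary, the rescaled flow is approximately the flow of the model $\dot\xi_x=-x|\eta|^2$, $\dot x=\xi_x$. This is a harmonic oscillator in $x$: the geodesic leaves the boundary, reaches height of order $\eps$, and returns after time $T=\pi/|\eta|$, up to corrections.

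Meanwhile $\dot y=x^2h^{-1}\eta$, so the geodesic moves a tangential distance of order $\eps^2$. Along it, $\ip{f}{\dot\gamma}=b\cdot\dot y=x^{k+2}\,b_k(y)(h^{-1}\eta)+\cdots$. With $x(t)=\eps\sin(|\eta|t)/|\eta|$, the integral is
\begin{equation*}
If \approx \eps^{k+2}\, b_k(y_0)(h^{-1}\eta)\int_0^{T}\bigl(\sin(|\eta|t)/|\eta|\bigr)^{k+2}\,\der t ,
\end{equation*}
and the remaining integral is strictly positive. Setting the leading term of $If=0$ to zero gives $b_k(y_0)(h^{-1}\eta)=0$ for all $\eta$, hence $b_k=0$. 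Induct on $k$.

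Finally, the $\der x$-component of any higher-order gauge is already eliminated. To get smoothness up to the boundary, apply Borel's lemma to $q_0$ together with any corrections; in fact only $q_0$ is needed.

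\textbf{Main obstacle.} The hard part is controlling the error terms in the asymptotics rigorously. We need the geodesic flow expanded in $\eps$ uniformly, with the $x$-dependence of $h$ and the $y$-motion giving only higher-order corrections. This is where the smoothness of the Hamiltonian flow up to $x=0$ in the chosen structure is essential. I would try the rescaling $x=\eps X$, $\xi_x=\eps\Xi$ in $H$, which makes the flow a smooth perturbation in $\eps$ of the oscillator. Then Taylor-expand the integrand and use the smooth dependence of the flow on parameters.
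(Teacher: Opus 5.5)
Your proposal is correct and follows essentially the same route as the paper. You use the same gauge $q=\chi(x)\int_0^x f_0(t,y)\,\der t$ (lemma~\ref{lma:gauge correction gas giant}), and then the same induction on the tangential Taylor coefficients $\partial_x^k f_i(0,y)$. That induction uses short geodesics launched from $(0,y)$ with small normal momentum $s$ and fixed $\eta$, whose integrand has leading order $x^{k+2}$ (proposition~\ref{prop:boundary determination}).

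The only difference is in how the error terms are controlled. You rescale $x=\eps X$, $\xi=\eps\Xi$ to get a smooth perturbation of a harmonic oscillator, which gives explicit leading asymptotics. The paper instead uses cruder uniform bounds (height $x\le Cs$, exit time $\tau\le C$, $y$ and $\eta$ nearly constant) together with a sign argument. Your version makes the error control somewhat more transparent.
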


\begin{lemma}[proved in section~\ref{sec:Regularity of the integral function}]
\label{keylemma:u-regularity}
Let $(M,g)$ be a non-trapping gas giant. If $If=0$ and $f\in  x^\infty C^\infty(M,T^* M)$ then $u^f\in \Omega$ in the sense of definition~\ref{def:niceness-of-uf}.
\end{lemma}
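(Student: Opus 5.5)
We verify the five conditions of definition~\ref{def:niceness-of-uf} for $u=u^f$ one at a time, using two facts. First, $f\in x^\infty C^\infty(M,T^*M)$. Second, $If=0$, which lets us write $u^f$ as an integral over \emph{either} half of the geodesic. Let $\gamma$ be the maximal geodesic through $(z,\zeta)$ with exit times $\tau_\pm$, and write $\langle f,\dot\gamma\rangle=\langle f,\xi\rangle$ along the cotangent flow. Then $If=0$ gives
\[
u^f(z,\zeta)=\int_0^{\tau_+(z,\zeta)}\langle f,\xi\rangle\,\der t=-\int_{-\tau_-(z,\zeta)}^{0}\langle f,\xi\rangle\,\der t .
\]

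\emph{Regularity in the interior and condition (2).} In the interior, non-trapping gives a smooth flow with exit time smooth on $S^*M^\circ$ away from tangential directions. Since $f$ vanishes to infinite order at $\partial M$, the integrand is flat where tangency is an issue, so $u^f$ is smooth on $S^*M^\circ$. Differentiating along the flow gives the transport equation $Xu^f=-\lambda f$, where $\lambda f(z,\zeta)=\langle f(z),\zeta\rangle$ is the linear function on the fibres. This is smooth up to the boundary because $f$ is. Hence (2) holds.

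\emph{Decay at the boundary: conditions (1), (3), (5).} This is where the asymptotics of short geodesics enter. I would use the cotangent-bundle normal form $H=\tfrac12(\xi_x^2+x^2 h(x,y,\eta))+\dots$ from section~\ref{sec:preliminaries}. From it, for a point at height $x$ the geodesic reaches $\partial M$ within time $\Order(x)$ if $\xi_x$ points outward. If instead $\xi_x$ points inward, we use the other representation and go backward in time, which also reaches $\partial M$ within time $\Order(x)$. Along such a segment $x(t)\le Cx$ and $|\xi|$ stays bounded in the $g$-normal structure. So $|u^f|\le C x\cdot\sup_{x'\le Cx}|f|=\Order(x^N)$ for every $N$.

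For nearly tangential covectors ($\xi_x$ small), the cotangent flow is non-singular up to $x=0$ (table~\ref{tab:dot-xi}). On the other hand, $\dot\xi_x=-x\eta^2$, so the geodesic turns around and exits in time $\Order(1)$. During that time it stays in $\{x'\le Cx\}$ by a convexity/ODE comparison argument. Hence the same bound holds.

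Derivatives are treated by differentiating under the integral. Derivatives of the flow with respect to initial data grow at most polynomially in $1/x$, by Gronwall on the smooth Hamiltonian system in the $(x,y,\xi_x,\eta)$ coordinates. Derivatives of the exit time produce boundary terms $\langle f,\xi\rangle|_{\partial M}=0$. Infinite-order flatness of $f$ absorbs all polynomial losses. This gives (1), the $x^\infty L^\infty$ bound (3) for $\nabla_{S^*M}u$, and the vanishing (5) of $u$ and $\gradv u$ on $\partial S^*M$.

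\emph{Condition (4).} We have $\gradv Xu=-\gradv\lambda f$, which is bounded, hence $L^2$ on $N$ (assuming $N$ has finite volume in the relevant measure). For the other term we use the commutator $[X,\gradv]=-\gradh$ plus lower-order terms. This gives $X\gradv u=\gradv Xu-\gradh u+(\text{lower order})$. The right-hand side is $L^2$ by (2) and (3).

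\emph{Main obstacle.} The hardest step is the uniform short-geodesic estimate: every point at height $x$ is joined to $\partial M$, in one time direction, by a segment of controlled length staying within height $Cx$, together with polynomial-in-$1/x$ bounds on the derivatives of the flow and exit time. I would attack it by explicit comparison with the model $\der x^2+x^{-2}\der y^2$, whose geodesics are computable, and then a perturbation argument in the smooth Hamiltonian coordinates.
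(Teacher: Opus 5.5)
Your plan is the paper's plan. Condition (2) comes from the transport equation $Xu^f=-\lambda f$. Decay of $u^f$ comes from using $If=0$ to pass to an outward covector, whose geodesic stays below height $x$ and exits within time $\Order(x)$. First derivatives come from differentiating under the integral, with the exit-time term dropping out. Finally, $X\gradv u^f=\gradv Xu^f-\gradh u^f$. The paper proves finite-$k$ rates (proposition~\ref{prop:blow-up rates of u}) and then lets $k\to\infty$, which is cosmetic. Your reason for dropping the exit-time term, $\lambda f=0$ on $\{x=0\}$, is the robust one. The paper instead appeals to $\nabla_{S^*M}\tau=-X$, which already fails for $\der x^2+x^{-2}\der y^2$: there a unit covector at $g$-angle $\alpha$ from the outward normal exits at time $x\,\alpha/\sin\alpha$, so $\gradv\tau\neq0$.

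The step you flag as the main obstacle is, however, a genuine gap as written. Gronwall applied to $X$ in the coordinates $(x,y,\xi,\eta)$ does not give polynomial bounds on $S^*M$. Although $X$ is smooth on $T^*M$ up to $x=0$, the unit condition $\xi^2+x^2h(\eta,\eta)=1$ allows $\abs{\eta}\sim x_0^{-1}$ at height $x_0$. Then the entry $\partial_x\dot\xi\approx-h(\eta,\eta)$ of $DX$ has size $x_0^{-2}$ along the whole segment, which lasts a time comparable to $x_0$. Gronwall therefore only gives $e^{C/x_0}$, which no power $x_0^N$ absorbs.

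The paper fills this step with the Jacobi field bounds $\abs{J}_{\bar g}\le C$, $\abs{\Der_tJ}_{\bar g}\le Cx^{-1}$ of lemma~\ref{lma:Jacobi fields}, imported from the literature, via $\gradv u^f=\int f(\Der_tJ)\,\der t$ and $\gradh u^f=\int\der f(J,\dot\gamma)\,\der t$. If you prefer Hamiltonian coordinates, use homogeneity: $\phi_t(z,s\zeta)=S_s\phi_{st}(z,\zeta)$ with $S_s(z,\zeta)=(z,s\zeta)$. Fix $s=x_0$ near the initial point. Then $\phi_t=S_{1/s}\circ\phi_{t/s}\circ S_s$ turns the unit-speed segment into a flow of time $\Order(1)$ inside a fixed compact subset of $T^*M$. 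This is the short-geodesic regime of section~\ref{sec:Boundary determination}, where all derivatives of the flow are bounded, and undoing $S_s$ costs only powers of $x_0^{-1}$.

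For this you need $\tau\le Cx_0$ uniformly over \emph{all} outward unit covectors, nearly tangential ones included. Your $\Order(1)$ for that case would be too weak, but the case is already covered by your outward/inward split.

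A second, easily fixed error is in (4). $S^*M$ has infinite Sasaki volume, since $\der V_g=x^{-(n-1)}\der x\,\der V_h$ near $\partial M$ and the fibres have fixed volume, so ``bounded, hence $L^2$'' is false. Use decay instead. $\gradv\lambda f$ is the $g$-orthogonal projection of $f^\sharp$ onto $N$, so $\abs{\gradv\lambda f}_g\le\abs{f}_g=\Order(x^\infty)$. Likewise $\gradh u^f=\Order(x^\infty)$ by (3), once the derivative bounds above are repaired. Anything that is $\Order(x^N)$ with $2N>n-2$ is square integrable against $x^{-(n-1)}\der x$. With these two repairs your argument proves the lemma.
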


\begin{lemma}[proved in section~\ref{sec:Pestov identity}]
\label{keylemma:pestov}
Let $(M,g)$ be a non-trapping gas giant with non-positive curvature. If $u\in \Omega$ and $Xu(z,\zeta)$ is a polynomial of order~$1$ w.r.t.~$\zeta$, then there exists a function $p\in C^\infty(M)$ so that $u= \pi^* p$ on $S^* M$.
\end{lemma}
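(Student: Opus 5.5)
We are asked to plan the proof of Lemma~\ref{keylemma:pestov}. The plan is to run the standard Pestov argument for $1$-forms, using the conditions in $\Omega$ to justify every integration by parts on the singular space $S^*M$.

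\textbf{Step 1: the Pestov identity.} Starting from the Pestov identity~\eqref{eqn:Pestov full} applied to $u$, I would write
\begin{equation*}
\norm{\gradv X u}^2 = \norm{X\gradv u}^2 - (R\gradv u,\gradv u) + (n-1)\norm{Xu}^2 ,
\end{equation*}
with $L^2$ norms over $S^*M$ with respect to the Liouville measure, and $n=\dim M$. The derivation of this identity uses commutator formulas such as $[X,\gradv]=-\gradh$ and $\operatorname{div}^{\mathrm h}\gradv-\operatorname{div}^{\mathrm v}\gradh=(n-1)X$. It integrates by parts several times, and each time a boundary term appears on $\partial S^*M$.

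\textbf{Step 2: justifying the integrations by parts.} This is where the hypotheses in $\Omega$ enter, and I expect it to be the main obstacle. The Liouville measure degenerates or blows up at $x=0$ with some fixed power of $x$, coming from the conformal factor and the non-compact fibres of $S^*M$ near the boundary. My approach is to exhaust $M$ by the sets $\{x\ge\eps\}$ and integrate by parts there. The boundary terms on $\{x=\eps\}$ involve $u$, $\nabla_{S^*M}u$ and coefficients of polynomial size in $1/\eps$. By conditions (1) and (3), $u$ and $\nabla_{S^*M}u$ are $\Order(\eps^\infty)$ uniformly, so these boundary terms tend to zero. Condition (4) ensures that the two quantities $\gradv Xu$ and $X\gradv u$ in the identity are finite, so we can pass to the limit. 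Condition (2) gives $Xu\in L^2$. Condition (5) is the limiting statement of the boundary values.

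\textbf{Step 3: inserting the assumptions.} Since $Xu$ is linear in $\zeta$, we have $Xu=\pi_1^*\beta$ for a $1$-form $\beta$. By Step 2, $\beta$ is smooth. For such functions, a pointwise computation on each fibre gives $\norm{\gradv Xu}^2=(n-1)\norm{Xu}^2$. Substituting this into the identity cancels those two terms and leaves
\begin{equation*}
\norm{X\gradv u}^2-(R\gradv u,\gradv u)=0 .
\end{equation*}
Non-positive curvature makes both terms non-negative, so $X\gradv u=0$. Hence $\gradv u$ is constant along geodesics. By condition (5) it vanishes on $\partial S^*M$, and since $(M,g)$ is non-trapping every geodesic reaches the boundary. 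Therefore $\gradv u\equiv0$.

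\textbf{Step 4: conclusion.} Since $\gradv u=0$, $u$ is constant on each fibre, that is, $u=\pi^*p$ for some function $p$ on $M$. Smoothness of $p$ in the interior follows from condition (1). Up to the boundary, $p$ vanishes to infinite order: $p\in x^\infty C^\infty$, because $u$ is restricted to the zero section direction. In particular, $p\in C^\infty(M)$.
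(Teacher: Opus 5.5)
Your Steps 1--3 match the paper's argument: the Pestov identity on the truncations $S^*M_\varepsilon$, the boundary term killed by $\Order(\varepsilon^\infty)$ decay against the $\varepsilon^{-(n-1)}$ growth of the boundary volume, the fibrewise identity $\norm{\gradv Xu}^2=(n-1)\norm{Xu}^2$, non-positive curvature giving $X\gradv u=0$, and integration along the non-trapped flow to $\partial S^*M$, where $\gradv u=0$. One attribution there is wrong. Condition (2) does not give $Xu\in L^2$: near the boundary $\der\Sigma$ behaves like $x^{-(n-1)}\der x$ (indeed $\Vol_g(M)=\infty$), so a bounded smooth function need not be square integrable. $Xu\in L^2$ comes from (3) instead: $X$ has unit Sasaki length, so $\abs{Xu}\le\abs{\nabla_{S^*M}u}=\Order(x^\infty)$.

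The step that fails as written is Step 4, the smoothness of $p$ up to the boundary. This is part of the conclusion, and it is what the main theorem needs. Condition (1) only says that $u$ is smooth on the interior $S^*M^\circ$ and is $\Order(x^N)$ for every $N$; this is all the paper establishes for $u^f$. It gives no control of derivatives at $x=0$. So restricting $u$ to any section, in whatever direction, only yields $p\in C^\infty(M^\circ)$ with $p=\Order(x^\infty)$. It does not give $p\in C^\infty(M)$, let alone $p\in x^\infty C^\infty(M)$.

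The hypothesis that does the job is (2), as in the paper. We have $Xu=X\pi^*p=\lambda\,\der p=\partial_xp\,\xi+x^2h^{ij}\partial_{y^i}p\,\eta_j$, and this is smooth on $S^*M$ up to the boundary. Evaluate it on the smooth section $\{\xi=1,\ \eta=0\}$ of $S^*M$ near $\partial M$. This shows that $\partial_xp$ is smooth up to $x=0$. Since $p\to0$ at the boundary, $p(x,y)=\int_0^x\partial_xp(s,y)\,\der s$ is then smooth up to the boundary. This makes precise the paper's remark that a non-smooth $p$ would make $\lambda\,\der p$ non-smooth.

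If you also want infinite-order vanishing, add (3). It gives $\abs{\partial_xp}\le\abs{\der p}_g=\abs{\nabla_{S^*M}u}=\Order(x^\infty)$, so the Taylor series of the smooth function $\partial_xp$ at $x=0$ vanishes.
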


\begin{proof}[Proof of theorem~\ref{thm:injectivity-mod-gauge}]
We start by showing that $If=0$ implies $f=\der \Tilde{q}$ for some scalar function $\Tilde{q}$. Assume that $f$ is a smooth 1-form with $If=0$. Then by lemma~\ref{keylemma:bdy-det} we may use the lemma~\ref{keylemma:u-regularity} on the 1-form $\Tilde{f}:=f-\der q$. By lemma~\ref{keylemma:u-regularity} and the transport equation $Xu^{\Tilde{f}}=-\lambda \Tilde{f}$ giving an expression that is polynomial of order 1 w.r.t.\ $\zeta$ we may use lemma~\ref{keylemma:pestov} on the integral function $u^{\Tilde{f}}$ so that $u^{\Tilde{f}}(z,\zeta)=p(z)$ on $S^* M$. It follows that $p|_{\partial M}=0$. Since $u^{\Tilde{f}}$ is a solution of the transport equation $Xu^{\Tilde{f}}=-\lambda \Tilde{f}$ then we have
\begin{equation}
    Xu^{\Tilde{f}} =X\pi^* p=\lambda \der p=-\lambda \Tilde{f}
\end{equation}
Therefore $\Tilde{f}=-\der p$ and hence $f=\der (q-p)$ for $(q-p)|_{\partial M}=0$ which concludes the proof of the non-trivial direction of the claim. Note that from this same calculation we see that the function $p$ of lemma~\ref{keylemma:pestov} has to be smooth. If $p$ was not smooth then $\lambda\der p=X\pi^* p$ would also fail to be smooth which is a contradiction with the fact that $u^{\Tilde{f}}\in \Omega$.

To prove the other direction of the equivalence assume that $f=\der p$ for some $p\in C^\infty(M)$ with $p|_{\partial M}=0$. Then by $Xp=\lambda \der p$ and fundamental theorem of calculus we have $If=I\der p=0$.
\end{proof}

\subsection{Outline of the proofs of lemmas}

We prove the boundary determination lemma~\ref{keylemma:bdy-det} in section~\ref{sec:Boundary determination} where we first prove a result regarding short geodesics on gas giants in proposition~\ref{prop:short_geodesics}. We then use proposition~\ref{prop:short_geodesics} and a slightly modified version of the ideas of~\cite[proof of theorem 2.1]{LSU2003} to prove in proposition~\ref{prop:boundary determination} that on the boundary $\partial M$ the derivatives of a 1-form $f$ with $If=0$ vanish to the boundary normal direction up to a gauge described in lemma~\ref{lma:gauge correction gas giant}.
Lemma~\ref{keylemma:bdy-det} follows then from proposition~\ref{prop:boundary determination}.
Note that lemma~\ref{keylemma:bdy-det} only requires that $(M,g)$ is a gas giant so geometrically the result of this lemma exploits the fact that the boundary of a gas giant is (infinitely) strictly convex; see section~\ref{sec:analysis of exit time} for a more detailed discussion on strict convexity.

In section~\ref{sec:Regularity of the integral function} we prove lemma~\ref{keylemma:u-regularity} by proving a slightly weaker regularity result for the integral function $u^f$ in proposition~\ref{prop:blow-up rates of u}. We then show that lemma~\ref{keylemma:u-regularity} follows from proposition~\ref{prop:blow-up rates of u} and throughout section~\ref{sec:Regularity of the integral function} we prove various regularity results which we then compile to prove proposition~\ref{prop:blow-up rates of u} at the end of section~\ref{sec:Regularity of the integral function}. In lemma~\ref{keylemma:u-regularity} we have to assume that the manifold is globally non-trapping since the integral function is not well-defined without this assumption. 
This feature of the global geodesic flow does not follow from gas giant geometry.

Finally in section~\ref{sec:Pestov identity} we prove a Pestov identity for functions in the space $\Omega$. The conclusion of using the Pestov identity is in corollary~\ref{cor:corollary of Pestov} and lemma~\ref{lma:Pestov conclusion}. Then lemma~\ref{keylemma:pestov} follows directly from lemma~\ref{lma:Pestov conclusion}. To simplify the proof of lemma~\ref{keylemma:pestov} we assume that the manifold $(M,g)$ has non-positive curvature. We expect that this assumption may be replaced with the assumption that the manifold has no conjugate points, so that $(M,g)$ would then be a simple gas giant, but we choose not to pursue this level of generality due to technical difficulties caused by curvature blowing up at the boundary.

\section{Preliminaries}
\label{sec:preliminaries}

We describe the mathematical setup of our result here. Let $(M,\overline{g})$ be an $n$-dimensional smooth compact Riemannian manifold with boundary, for $n\ge 2$. As in  section~\ref{sec:intro} we equip $M$ with a gas giant metric $g=\rho^{-1}\overline{g}$ where $\rho$ is a boundary defining function and call the pair $(M,g)$ a gas giant. We denote the interior of $M$ by $M^\circ$.  Let $U\subset \partial M$ be a neighborhood of the boundary and let $x\in [0,1)$. Then near the boundary of a gas giant there exists a smooth structure $(x,y)\in [0,1)\times \partial M$ and a family of smooth metrics $h_x$ varying smoothly w.r.t.\ $x$ so that
\begin{equation}
g=\der x^2+x^{-2}h(x,y,\der y)
\end{equation}
in this smooth structure. As seen in the introduction (section~\ref{sec:intro}) we can scale the coordinate $x$ to $\rho$ and use the result~\cite[proposition 2]{dHIKM2024} to show that this smooth structure exists. All smoothness analysis up to the boundary is done w.r.t.\ the smooth structure $(x,y)$.

We use the coordinates $z=(x,y)$ with the convention that $z^0=x$ and $z^i=y^i$ for $i\geq1$.
Greek indices (e.g. $\mu,\nu$) run from $0$ to $n-1$ and Latin ones (e.g. $i,j$) from $1$ to $n-1$.
We use the Einstein summation convention.
The Christoffel symbols of the metric $g=\der x^2+x^{-2}h(x,y,\der y)$ are given by
\begin{align}
    &\Gamma^0_{00}=\Gamma^i_{00}=\Gamma^0_{i0}=0,\\
    &\Gamma^i_{j0}=\frac{1}{2}h^{ik}\partial_x h_{kj}-\delta^i_{\ j} x^{-1},\\
    &\Gamma^0_{ij}=x^{-3}h_{ij}-\frac{1}{2}x^{-2} \partial_x h_{ij},\\
    &\Gamma^i_{jk}=H^i_{jk}
\end{align}
where $H^i_{jk}$ are the Christoffel symbols of $h$. By~\cite[proposition 1]{dHIKM2024} we have $R\sim x^{-2}$ in the sectional curvature sense for a gas giant so that the curvature approaches $-\infty$ up to the boundary. The volume form of the gas giant is given by $\der V_g= x^{-(n-1)}\der x\, \der V_h$ near the boundary. 
Therefore the volume $\Vol_g(M)$ of the gas giant is infinite.

Next we will set the notion of geodesics on a gas giant as a Hamiltonian flow. Let $(z,\zeta)\in T^* M$, with $z=(x,y)$ and $\zeta=(\xi,\eta)$, and let $H(x,y,\xi,\eta):=\frac{1}{2}\xi ^2 +\frac{1}{2}x^2 h^{ij}\eta_i\eta_j$ be the Hamiltonian. The geodesic flow $\phi_t:T^* M\to T^* M$ is given by the initial condition $(z,\zeta)$ and Hamilton's equations
\begin{align}
        \dot{x}&=\frac{\partial H}{\partial \xi}=\xi,
        \\
        \dot{y}^i&=\frac{\partial H}{\partial \eta_i}=x^2 h^{ij}\eta_j, \\
        \dot{\xi}&=-\frac{\partial H}{\partial x}=-x h^{ij}\eta_i\eta_j-\frac{1}{2}x^2\frac{\partial h^{ij}}{\partial x}\eta_i\eta_j,\\
        \dot{\eta}_i&=-\frac{\partial H}{\partial y^i}=-\frac{1}{2}x^2 \frac{\partial h^{kj}}{\partial y^i}\eta_k\eta_j
\end{align}
so that $\phi_t(z,\zeta)=(x(t),y(t),\xi(t),\eta(t))$. We denote the exit time of a geodesic by $\tau(z,\zeta)$ so that $t\in [0,\tau(z,\zeta)]$. The Hamiltonian vector field of the geodesic flow is given by
\begin{equation}
X=\xi\partial_x +x^2h^{ij}\eta_j\partial_{y^i}-\left ( xh^{ij}\eta_i\eta_j-\frac{1}{2}x^2\frac{\partial h^{ij}}{\partial x}\eta_i\eta_j \right )\partial_\xi -\frac{1}{2}x^2\frac{\partial h^{kj}}{\partial y^i}\eta_k\eta_j\partial_{\eta_i}
\end{equation}
We will analyze the boundary behavior of the geodesic flow in more detail in sections~\ref{sec:analysis of exit time} and~\ref{sec:Boundary determination}. By using the canonical projection $\pi\colon T^* M\to M$ we define the geodesic with an initial condition $(z,\zeta)\in T^* M$ by $\gamma_t:=\pi (\phi_t(z,\zeta))$ for which there exists a lift to $TM$ by $(\gamma_t,\dot{\gamma}_t):= (z(t)),\zeta^\sharp (t))$, where $\zeta^\sharp =(\xi , x^2h^{ij}\eta_j)$ near the boundary. 

Let $S^* M=\{(z,\zeta)\in T^* M\colon \abs{\zeta}_g=1\}$.
Then the identification $\lambda$ of a 1-form $f$ to a function on $S^* M$ is given by
\begin{equation}
\lambda f(z,\zeta):=f_\mu(z) g^{\mu\nu}(z)\zeta_\nu
\end{equation}
so that the integral function $u^f\colon S^* M\to \R $ is defined by
\begin{equation}
u^f(z,\zeta):=\int_0^{\tau(z,\zeta)}f_\mu (z(t))g^{\mu\nu}(z(t))\zeta_\nu(t)\der t=\int_0^{\tau(z,\zeta)}f_\mu (\gamma(t))\dot{\gamma}^\mu (t) \der t.
\end{equation}
By setting $\partial_\pm S^* M=\{(0,y,\xi,\eta)\in \partial T^* M\colon \xi=\pm 1\}$ we have the restriction $u^f|_{\partial_+ S^* M}=If$ and the disjoint union $\partial S^* M =\partial_+ S^*M\sqcup \partial_- S^* M$. In a similar fashion to $\partial_\pm S^* M$ we denote $\partial_0 T^* M \cong T^* \partial M$ by $\partial_0T^* M=\{ (0,y,\xi,\eta)\in \partial T^* M\colon \xi =0\}$.

Next we will give a standard collection of results regarding the geometry of the unit cosphere bundle $S^* M$ that we will use freely in sections~\ref{sec:Regularity of the integral function} and~\ref{sec:Pestov identity}. For details cf.~\cite{Pat1999} and for local coordinate formulas of various objects described below see appendix~\ref{sec:appendix}.

For the remainder of this section let $g$ be a Riemannian metric smooth up to the boundary and denote by $\pi$ the restriction of the canonical projection of $T^* M$ to $S^* M$. Let $(z,\zeta)\in S^* M$, $\theta\in T_{(z,\zeta)}S^* M$ and $c$ be a curve in $S^* M$ with $c(0)=(z,\zeta)$ and $\dot{c}(0)=\theta$. Then the connection map $K\colon TS^* M\to TM$ is defined by $K(\theta)=\Der_t c (0)^\sharp$ where $\sharp$ denotes the musical isomorphism $T^* M\to TM$ and $\Der_t$ is the Levi-Civita connection along the curve $\pi (c(t))$ on $M$. Let $\mathcal{V}:=\ker \der \pi$ denote the vertical bundle and $\mathcal{H}:=\ker K$ the horizontal bundle. There is a splitting
\begin{equation}
    TS^* M=\R X \oplus \mathcal{H}\oplus \mathcal{V}
\end{equation}
which is orthogonal w.r.t.\ the Sasaki metric of $S^* M$ (see equation~\eqref{eqn:Sasaki metric}). The maps $\der \pi |_{\mathcal{H}}\colon\mathcal{H}\to N$ and $K|_{\mathcal{V}}\colon \mathcal{V}\to N$ are isomorphisms where $N\to S^* M$ is the normal bundle defined in equation~\eqref{eqn:normal bundle}. The Sasaki gradient of a smooth function $u\in C^\infty (S^* M)$ is given by
\begin{equation}
\nabla_{S^* M} u
=
(Xu)X
+
\gradv u
+
\gradh u
\end{equation}
where $X$ is the Hamiltonian vector field of the geodesic flow and $\gradv u, \gradh u$ are the vertical and horizontal gradients which are smooth sections of the normal bundle $N$, see equations~\eqref{eqn:vertical gradient local} and~\eqref{eqn:horizontal gradient local} for coordinate expressions of the gradients. For a section $W$ of the normal bundle $N$ we can view the Riemannian curvature tensor as a mapping of $W$ to another section of $N$ via $RW(z,\zeta)= R(W(z,\zeta),\zeta^\sharp)\zeta^\sharp$. We denote the volume form induced by the Sasaki metric on $S^* M$ by $\der \Sigma$.
Following the convention of~\cite[Proof of theorem 1.1]{GGSU2019}, the volume form of $\partial S^* M$ is given by $\der \sigma=j^* i_X \der \Sigma$, where $j\colon\partial S^* M\to S^* M$ is the inclusion and $i_X$ denotes the interior product w.r.t. the Hamiltonian vector field $X$.

\section{Analysis of the exit time}
\label{sec:analysis of exit time}

This section contains the analysis of the geodesic flow on the cotangent bundle when we choose the smooth structure to be such that $g=\der x^2+x^{-2}h(x,y,\der y)$. As a result of the analysis we prove proposition~\ref{prop:asymptotic of exit time} regarding the asymptotic behavior of the exit time of geodesics on a gas giant. Most of the arguments in this section are very similar to those in~\cite[section 2.2]{dHIKM2024} but since we are using a different smooth structure we will list some properties of the flow that will be essential in proving the main result.

To clear up notation and to give geometric intuition to the analysis of the geodesic flow we introduce the notion of second fundamental form of the boundary that is extended to the neighborhood where the boundary normal coordinates are defined by the following

\begin{proposition}
    For the metric $g = \der x^2+x^{-2}h(x,y,\der y)$ and the tangent vectors $a=\omega^\sharp$, $b=\theta
     ^\sharp$, where $\theta,\omega\in T^* M^\circ$, we have the second fundamental form of the level sets of $x$ and its dual
     \begin{align}
         I\!I(a,b)&:= x^{-3}h_{ij}a^ib^j-\frac{1}{2}x^{-2}\partial_x h_{ij}a^ib^j\\
         I\!I(\omega,\theta)&:=x h^{ij}\omega_i\theta_j +\frac{1}{2}x^2   \partial_x h^{ij} \omega_i\theta_j
     \end{align}
     in the sense that $I\!I(a,b)=I\!I(\omega,\theta)$ up to musical isomorphisms.
\end{proposition}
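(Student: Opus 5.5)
We need to verify two things: that the first expression is the second fundamental form of the level sets of $x$ (with respect to the unit normal $\partial_x$, up to the sign convention matching $\Gamma^0_{ij}$), and that lowering indices with $g$ turns it into the second expression.

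For the first claim, note that $\partial_x$ is the unit normal to $\{x=c\}$, since $g_{00}=1$ and $g_{0i}=0$. For tangential vectors $a,b$ the second fundamental form is
\[
I\!I(a,b)=g(\nabla_a b,\partial_x)\cdot(\pm1)=\pm\Gamma^0_{ij}a^ib^j .
\]
Using the Christoffel symbol $\Gamma^0_{ij}=x^{-3}h_{ij}-\frac12 x^{-2}\partial_x h_{ij}$ listed in section~\ref{sec:preliminaries}, this is exactly the first formula with the sign convention $I\!I(a,b)=\Gamma^0_{ij}a^ib^j$. It can also be read as $-\frac12\partial_x g_{ij}$, because $g_{ij}=x^{-2}h_{ij}$ gives
\[
-\tfrac12\partial_x(x^{-2}h_{ij})=x^{-3}h_{ij}-\tfrac12x^{-2}\partial_xh_{ij}.
\]
Only the tangential components $a^i,b^j$ enter, so the form is really applied to the tangential projections.

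For the duality, substitute $a=\omega^\sharp$, $b=\theta^\sharp$. Because the metric is block diagonal, the tangential components are $a^i=g^{ik}\omega_k=x^2h^{ik}\omega_k$, and likewise $b^j=x^2h^{jl}\theta_l$. Then
\[
x^{-3}h_{ij}a^ib^j=x^{-3}x^4h^{kl}\omega_k\theta_l=x\,h^{kl}\omega_k\theta_l .
\]
For the derivative term, use $\partial_x h^{kl}=-h^{ki}(\partial_xh_{ij})h^{jl}$. This gives
\[
-\tfrac12x^{-2}\partial_xh_{ij}\,x^4h^{ik}h^{jl}\omega_k\theta_l=\tfrac12x^2\partial_xh^{kl}\omega_k\theta_l .
\]
Adding the two terms yields the second formula.

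A useful consistency check is that the dual expression equals $\frac12\partial_x(x^2h^{ij})\omega_i\theta_j=\frac12\partial_x g^{ij}\omega_i\theta_j$, which is what one expects from raising indices in $-\frac12\partial_x g_{ij}$. It also agrees with the $\dot\xi$ equation of the Hamiltonian flow: there $\dot\xi=-I\!I(\eta,\eta)$, where the paper's $X$ should carry a $+$ sign on the $\partial_xh^{ij}$ term. There is no real obstacle here. The only care needed is the sign of $\partial_x h^{ij}$ when inverting and the exact powers of $x$.
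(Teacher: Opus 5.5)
Your proposal is correct and is essentially the paper's argument: the paper also writes $I\!I(a,b)=-\frac{1}{2}\partial_x g_{ij}a^ib^j$ and passes to the dual form via $x^2\partial_x h^{ij}=-x^2h^{im}h^{kj}\partial_x h_{mk}$, exactly as you do after lowering indices with the block-diagonal metric. Your side remark is also right: Hamilton's equations give $\dot\xi=-I\!I(\eta,\eta)$, so the sign in front of the $\partial_x h^{ij}$ term in the paper's displayed formula for $X$ is a typo.
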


\begin{proof}
    The claim follows from a calculation by using the two formulas
    \begin{equation}
     I\!I(a,b)
     =
     -\frac{1}{2}\partial_x g_{ij}a^ib^j \teksti{and}
     x^2 \partial_x h^{ij}
     =
     -x^2 h^{im}h^{kj}\partial_x h_{mk}.\qedhere
    \end{equation}
\end{proof}

We can then write
\begin{equation}
\dot{\xi}(t)=-I\! I(\eta,\eta)
\end{equation}
and Taylor expanding $h^{ij}(x,y)$ and $\partial_x h^{ij}(x,y)$ in $x$ at $x=0$ gives us
\begin{equation}
    \dot{\xi} (t)=-I\! I(\eta,\eta)=- x h^{ij}(0,y)\eta_i\eta_j+\Order(x^2)
\end{equation}
Since $h(0,y)$ is just the metric on $\partial M$ then $h^{ij}(0,y)\eta_i\eta_j>0$ when $\eta\not=0$. Hence for some small $\varepsilon>0$ there is a region defined by $0<x<\varepsilon$ where $\dot{\xi}\le 0$. By the same argument $I\! I(\eta,\eta)>0$ in this region so the boundary of a gas giant is strictly convex in some sense.

Fix any $0<x_0<\varepsilon$ in this region and some reference point $(y,\eta)\in T^* \partial M$ with $\eta\not=0$. Then the above argument and $\dot{x}(t)=\xi(t)$ say that for $\xi_0\le 0$ there exists an exit time $\tau(x_0,y,\xi_0,\eta)$ for which $x(\tau)=0$. To simplify the rest of the argument we will use the fact proved in~\cite{dHIKM2024} that $\tau (x_0,y,\xi_0,\eta)<\infty$. Since $\xi(t)<0$ for $t\in (0,\tau]$ then $x$ is strictly decreasing and we may use $x$ as a variable. Next observe that
\begin{equation}
\frac{\der \xi}{\der x}
=
\frac{\der \xi/\der t}{\der x/\der t}
=
-\frac{I\! I(\eta,\eta)}{\xi}.
\end{equation}
Thus
\begin{equation}
\frac{\der }{\der x}\xi(x)^2
=
-2I\! I(\eta,\eta)
=
-\partial_x (x^2 h^{ij}\eta_i\eta_j).
\end{equation}
Integrating the last expression w.r.t.\ $x$ from 0 to $x_0$ gives us
\begin{equation}
    \xi(x_0)^2-\xi(0)^2
    =
    -x_0^2 h^{ij}(x_0,y)\eta_i\eta_j
\end{equation}    
and thus
\begin{equation}
    \xi(x_0)^2 +x_0^2 h^{ij}(x_0,y)\eta_i \eta_j
    =
    \xi(0)^2.
\end{equation}
Since the Hamiltonian $H(x,y,\xi,\eta)$ is preserved along the geodesic flow, then by reversing the flow we have that the last expression says that the speed of a given maximal geodesic is characterized by the normal component $\xi>0$ of the initial condition $(0,y,\xi,\eta)\in \partial T^* M$. This also means that any initial condition $(0,y,0,\eta)$ in the codirection of the boundary will result in a geodesic with zero speed, i.e. all geodesics with initial condition cotangent to the boundary are stationary. We will explore the general case of arbitrary speeds and the zero speed limit in the boundary determination section~\ref{sec:Boundary determination} below but for now we will focus on the unit speed case $\xi(0)=1$. 

Let us go back to the geodesic where $x$ is strictly decreasing so we may use it as a variable. Then by using the Taylor expansion $(a-b)^{-1/2}=a^{-1/2}+\Order (a^{-3/2}b)$ and the fact that $\xi\le 0$ we get
\begin{equation}
\xi(x_0)^2=1-x_0^2h^{ij}(x_0,y)\eta_i\eta_j 
\end{equation}
so that
\begin{equation}
\label{eqn:asymptotic of xi}
\begin{split}
    -\frac{1}{\xi(x_0)}
    &=
    \left ( 1-x_0^2h^{ij}(x_0,y)\eta_i\eta_j \right )^{-1/2}
    \\
    &=
    1+x_0^2h^{ij}(x_0,y)\eta_i\eta_jF(x_0,y,\eta)
\end{split}
\end{equation}
where $F$ is an auxiliary function that is uniformly bounded in $x_0,y,\eta$. With this discussion we are ready to prove the following. 

\begin{proposition}\label{prop:asymptotic of exit time}
Let $(M,g)$ be a gas giant. Fix some small $\varepsilon>0$ and let $U\subset \partial M$ be some neighborhood where normal coordinates $(x,y)\in [0,\varepsilon)\times U$ are defined and we have $I\! I(\eta,\eta)>0$. Fix $x_0\in (0,\varepsilon)$ and $y\in U$, and let $(\xi_0,\eta)\in S^*_{(x_0,y)}M$ with $\xi_0\le 0$. Then we have the estimate
    \begin{equation}
    \tau(x_0,y,\xi_0,\eta)=x_0+\Order (x_0^{3})
    \end{equation}
where the remainder terms are uniformly bounded in $(y,\xi_0,\eta)$. Namely $\tau\to 0$ uniformly in $(y,\xi_0,\eta)$, with $\xi_0\le 0$, as $x_0\to 0$.
\end{proposition}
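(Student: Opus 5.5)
The plan is to change variables from $t$ to $x$ along the outgoing part of the geodesic and integrate the expansion \eqref{eqn:asymptotic of xi}. I will use the uniform boundedness of $F$ asserted there as given; the last paragraph explains why this is the delicate point.

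\emph{Monotonicity and finiteness of $\tau$.} Fix $(x_0,y,\xi_0,\eta)\in S^*M$ with $0<x_0<\varepsilon$ and $\xi_0\le 0$. By the discussion preceding the proposition, $\dot\xi=-I\!I(\eta,\eta)<0$ in $\{0<x<\varepsilon\}$ whenever $\eta\neq0$. Hence $\xi(t)<0$ for $t\in(0,\tau]$, and $x$ is strictly decreasing there. The degenerate case $\eta=0$ is trivial: then $\xi\equiv-1$, so $\tau=x_0$ exactly. Finiteness of $\tau$ is quoted from~\cite{dHIKM2024}. Because $x$ is monotone and $\dot x=\xi$, we have
\begin{equation*}
\tau(x_0,y,\xi_0,\eta)=\int_0^{x_0}\frac{\der x}{-\xi(x)}.
\end{equation*}

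\emph{Integrating the expansion.} Conservation of $H$ on $S^*M$ gives $\xi(x)^2=1-x^2h^{ij}(x,y(x))\eta_i(x)\eta_j(x)$ at every point of the curve, with the running $y$ and $\eta$. This is the identity used to derive \eqref{eqn:asymptotic of xi}. Inserting that expansion,
\begin{equation*}
\tau=x_0+\int_0^{x_0}x^2h^{ij}(x,y(x))\eta_i(x)\eta_j(x)\,F\bigl(x,y(x),\eta(x)\bigr)\,\der x .
\end{equation*}
It remains to show the integrand is $\Order(x^2)$ uniformly. Along the flow, $\dot\eta_i=-\tfrac12x^2\partial_{y^i}h^{kj}\eta_k\eta_j$. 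Since $x\le x_0$ and $\tau$ is finite, $h^{ij}\eta_i\eta_j$ changes along the segment only by a factor $1+\Order(x_0)$, and $h^{ij}(x,y)$ is uniformly comparable to $h^{ij}(0,y)$ on the compact set $[0,\varepsilon]\times\overline U$. With $|\eta|_h$ controlled and $F$ bounded, the integrand is $\le Cx^2$, so the remainder is $\le Cx_0^3/3$. That yields $\tau=x_0+\Order(x_0^3)$, and in particular $\tau\to0$ uniformly as $x_0\to0$.

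\emph{Main obstacle: uniformity.} Uniformity in $\eta$ is the step I expect to be hardest, because on $S^*_{(x_0,y)}M$ the covector $\eta$ is only bounded by $|\eta|_h\lesssim 1/x_0$. The size of $x^2|\eta|_h^2$ near $x=x_0$ is therefore controlled only by the bound $x_0^2|\eta|^2_h\le1$. The bound on $F$ in \eqref{eqn:asymptotic of xi} degenerates as $x_0^2|\eta|_h^2\to1$, i.e.\ for nearly grazing initial data with $\xi_0\to0$.

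I would first verify the model case $h$ independent of $x$. There $x^2|\eta|_h^2$ scales exactly like $x^2$ along the curve, and $\tau$ can be computed in closed form as $x_0\arcsin(s^{1/2})/s^{1/2}$ with $s=x_0^2|\eta|^2_h$. This gives $\tau=x_0+\Order(x_0^3)$ whenever $|\eta|_h$ is bounded. But at the grazing value $s=1$ it gives $\tau=\tfrac{\pi}{2}x_0$, which shows exactly where the uniform bound on $F$ is needed.

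For the general case I would integrate
\begin{equation*}
\frac{\der}{\der x}\bigl(x^2h^{ij}\eta_i\eta_j\bigr)=\frac{2}{x}\bigl(x^2h^{ij}\eta_i\eta_j\bigr)\bigl(1+\Order(x)\bigr)
\end{equation*}
to get $x^2h^{ij}\eta_i\eta_j=s\,(x/x_0)^2(1+\Order(x_0))$, reducing to the model computation. This gives at least the uniform bound $\tau\le Cx_0$, hence the uniform convergence $\tau\to0$. The sharper remainder $\Order(x_0^3)$ then holds uniformly on any set where the bound on $F$ from \eqref{eqn:asymptotic of xi} is available, which I take as established earlier in the paper.
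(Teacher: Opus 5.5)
The step you flag as the main obstacle cannot be deferred to earlier results. It is exactly where both your argument and the paper's proof fail, and it cannot be repaired, because the uniform claim is false.

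Your route is the paper's: change variables $t\mapsto x$ on the outgoing segment and integrate \eqref{eqn:asymptotic of xi}. The paper closes this with two assertions. The first is that $F$ in \eqref{eqn:asymptotic of xi} is uniformly bounded. The second is that $h^{ij}\eta_i\eta_j$ is uniformly bounded along the geodesic, citing Lemma~\ref{lma:h-norm bound}. Neither holds on $S^*M$.

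With $a=x^2h^{ij}\eta_i\eta_j$ one has $F=\bigl((1-a)^{-1/2}-1\bigr)/a$. This is unbounded as $a\to1$, and $a\to1$ at $x=x_0$ for nearly grazing data. On $S^*_{(x_0,y)}M$ one only knows $|\eta|_h\le 1/x_0$. Lemma~\ref{lma:h-norm bound} concerns speed-$s$ geodesics from $\partial M$ with $h_0(\eta,\eta)=1$; after the gnomonic rescaling to $S^*M$ these have $|\eta|_h\sim 1/s$, so that lemma gives no uniform bound.

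The same problem sits in your second paragraph. The identity $X\bigl(h^{ij}\eta_i\eta_j\bigr)=\xi\,\partial_xh^{ij}\eta_i\eta_j$ does show that $h^{ij}\eta_i\eta_j$ changes only by a factor $1+\Order(x_0)$. But that is relative to an initial value which can be of order $x_0^{-2}$, so the integrand is not $\le Cx^2$.

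Your model computation is in fact a counterexample to the proposition as stated. With $s=x_0^2|\eta|_h^2=1-\xi_0^2$ it gives $\tau=x_0\arcsin(\sqrt s)/\sqrt s$. So for each fixed $\xi_0\in(-1,0]$ the difference $\tau-x_0$ is a fixed positive multiple of $x_0$, not $\Order(x_0^3)$. At $\xi_0=0$ one gets $\tau=\frac{\pi}{2}x_0$. This matches the paper's own remark after Lemma~\ref{lma:time estimate}: short geodesics with conserved $h$-norm have length $s\pi$ and apex height $s$. Your reduction $x^2h^{ij}\eta_i\eta_j\approx s(x/x_0)^2$, made precise, gives $\tau=x_0\frac{\arcsin\sqrt s}{\sqrt s}\bigl(1+\Order(x_0)\bigr)$ in every gas giant.

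So rather than ``taking as established'' the bound on $F$, you should conclude that only two weaker statements hold.

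(a) $\tau\le Cx_0$ uniformly over $\xi_0\le0$, hence $\tau\to0$ uniformly. Your last paragraph essentially proves this, since it yields $1-x^2h^{ij}\eta_i\eta_j\ge(1-Cx_0)(1-x^2/x_0^2)$ along the segment. This bound is all that later uses such as Lemma~\ref{lma:asymptotics of integral function} need.

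(b) $\tau=x_0+\Order(x_0^3)$ uniformly on sets where $|\eta|_h$ is bounded. Your final sentence is imprecise here. Boundedness of $F$, i.e.\ $\xi_0$ bounded away from $0$, is not enough, because to leading order the remainder is $x_0s/6=x_0^3|\eta|_h^2/6$. For instance, $\xi_0=-1/\sqrt2$ gives $\tau-x_0\approx0.11\,x_0$ in the model.
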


\begin{proof}
    Let $\varepsilon>0$ be small enough so that $\xi_0\le 0$ implies $\xi(t)<0$ for $t\in (0,\tau(x_0,y,\xi_0,\eta)]$ when $0<x_0<\varepsilon$. Then $x(t)$ is strictly decreasing along the flow so we may use $x$ as a change of variables with $\der x=\xi\der t$. We will use the fact that $h^{ij}\eta_i\eta_j$ is a uniformly bounded quantity along the given geodesic, cf.\ lemma~\ref{lma:h-norm bound} and its proof below for the verification of this fact. Using the asymptotic of $-1/\xi$ in equation~\eqref{eqn:asymptotic of xi} we get
\begin{equation}
\begin{split}
    \tau (x_0,y,\xi_0,\eta)
    &=
    \int_0^{\tau (x_0,y,\xi_0,\eta)}\der t
    \\
    &=
    \int_0^{x_0}-\frac{\der x}{\xi}
    \\
    &=
    \int_0^{x_0}\left ( 1+x^2h^{ij}(x,y)\eta_i\eta_jF(x,y,\eta) \right ) \der x
    \\
    &=
    x_0+\Order (x_0^3)
\end{split}
\end{equation}
    which proves the claim.
\end{proof}

\section{Boundary determination}
\label{sec:Boundary determination}

\subsection{Existence of short geodesics}

In this section the goal is to prove that given any fixed point $y\in \partial M$ and a boundary cotangent direction $\eta\in T^* _y\partial M$ there exists a family of maximal geodesics so that the flow on the cotangent bundle corresponding to the geodesics in this family stays in an arbitrarily small neighborhood $U\subset T^* M$ of the point $(0,y,0,\eta)\in \partial T^* M$. In particular the length of the maximal geodesics in this family will become arbitrarily small hence the name short geodesics.

\begin{proposition}
\label{prop:short_geodesics}
Let $(M,g)$ be a gas giant and $\varepsilon>0$ be small. Fix $y\in \partial M$. For $0<s <\varepsilon$ we take the one parameter set of initial conditions of geodesics $\sigma(s)=(s,\eta)\in T^* _{(0,y)} M$ where $\eta\in T^* _y\partial M$ is independent of $s$ with $\abs{\eta}_h^2=1$. Let $\gamma_s\colon [0,\tau(0,y,\sigma_s)]\to T^* M$ denote the geodesic corresponding to the initial condition $\sigma_s$ and let $0<l(\gamma_s) $ denote the length of the geodesic. Then $\gamma_s(t)\to (0,y,0,\eta)$ as $s\to 0$ uniformly in $t\in [0,\tau(0,y,\sigma_s)]$ and $l(\gamma_s)\to 0$ as $s\to 0$.
\end{proposition}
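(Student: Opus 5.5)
We will use the conserved Hamiltonian together with the convexity mechanism already exploited in Section~\ref{sec:analysis of exit time}. Along $\gamma_s$ we have $2H=\xi(t)^2+x(t)^2h^{ij}(x(t),y(t))\eta_i(t)\eta_j(t)=s^2$, since at $t=0$ we have $x=0$ and $\xi=s$. The plan is to bound $x(t)$, $\xi(t)$, $y(t)-y$ and $\eta(t)-\eta$ uniformly in $t$ by quantities tending to zero with $s$, and then to bound the length.

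\textbf{The $\xi$-component and the exit time.} Conservation of $H$ gives $\abs{\xi(t)}\le s$ immediately. For $\eta$ we use the equation $\dot\eta_i=-\frac12x^2\partial_{y^i}h^{kj}\eta_k\eta_j$. As long as the curve stays in the collar $x<\varepsilon$ on which $h^{ij}$ is uniformly comparable to $h^{ij}(0,y)$, this yields $\frac{\der}{\der t}\abs{\eta}_h^2\le Cx^2\abs{\eta}_h^2$ (the time derivative of $h^{ij}(x,y)$ also produces terms with $\dot x=\xi$ and $\dot y=\Order(x^2\abs\eta)$, which we absorb). We will combine this with the first-order bound on $x$ below, essentially the content of lemma~\ref{lma:h-norm bound}. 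To control the exit time we view the geodesic as in Section~\ref{sec:analysis of exit time}. Initially $\dot\xi=-I\!I(\eta,\eta)\le0$ in the collar, so $\xi$ decreases from $s$. The geodesic rises to a maximal height $x_{\max}$ where $\xi=0$ and then descends. Energy at the turning point gives $x_{\max}^2h^{ij}\eta_i\eta_j=s^2$. Since $\abs{\eta}_h\approx1$, this gives $x_{\max}\approx s$, so $x(t)\le Cs$.

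\textbf{Bootstrap and the exit time estimate.} A bootstrap/continuity argument makes the two estimates consistent: on the maximal interval where $x\le 2Cs$ and $\abs{\eta}_h\in[\frac12,2]$, the bounds above improve themselves for $s$ small. Strict convexity then keeps the geodesic in the collar, so it must exit. For the exit time we use $x$ as a parameter on the ascending and descending branches, as in proposition~\ref{prop:asymptotic of exit time}, with $\der t=\der x/\xi$ and $\xi^2=s^2-x^2h^{ij}\eta_i\eta_j$. With the substitution $x=s w$ this becomes $\der t=\der w/\sqrt{1-w^2\abs{\eta}_h^2}$ up to small errors. This is an integrable singularity at the turning point, so $\tau(0,y,\sigma_s)=\Order(1)$ (roughly $\pi$) uniformly as $s\to0$.

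\textbf{The main obstacle: $\tau$ does not tend to zero.} This is the key difficulty. Because $\tau$ stays of order one, we cannot argue from $\tau\to0$. Instead we integrate the equations against the small right-hand sides: $\abs{\dot y}\le Cx^2\le Cs^2$ and $\abs{\dot\eta}\le Cx^2\le Cs^2$ over a bounded time interval give $\abs{y(t)-y}+\abs{\eta(t)-\eta}=\Order(s^2)$. Together with $x\le Cs$ and $\abs\xi\le s$, this gives uniform convergence $\gamma_s(t)\to(0,y,0,\eta)$.

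\textbf{The length.} The length is $l(\gamma_s)=\int_0^\tau\abs{\dot\gamma}_g\,\der t=\int_0^\tau\sqrt{2H}\,\der t=s\,\tau=\Order(s)\to0$. If making the turning-point integral rigorous is troublesome (since $\xi$ vanishes there), the fallback is to bound $\tau$ from the ODE $\ddot x=\dot\xi=-I\!I(\eta,\eta)\le -c x$ with $x(0)=0$, $\dot x(0)=s$. Sturm comparison with $\ddot x=-cx$ shows that $x$ returns to zero by time $\pi/\sqrt c$.
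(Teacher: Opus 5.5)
Your proposal is correct and follows essentially the same route as the paper. Energy conservation gives $\abs{\xi}\le s$, and $x\le Cs$ once $h(\eta,\eta)$ is controlled. The exit time is shown to be $\Order(1)$ by using $x$ as the parameter on the ascending and descending branches; the paper substitutes $u^2=x^2h(\eta,\eta)/s^2$ where you use $x=sw$ or Sturm comparison. Finally, integrating $\abs{\dot y},\abs{\dot\eta}\le Cs^2$ over this bounded time and using $l(\gamma_s)=s\tau$ finish the argument.

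Your bootstrap is a more careful packaging of the paper's three interlocking lemmas. The paper instead treats $h_0(\eta,\eta)$ as exactly conserved, whereas in general $\frac{\der}{\der t}h_0(\eta,\eta)=\Order(x^3)$, so it is only approximately conserved. One small slip: $\frac{\der}{\der t}\bigl(h^{ij}(x,y)\eta_i\eta_j\bigr)=\xi\,\partial_x h^{ij}\eta_i\eta_j$ is only $\Order(s)$ pointwise and cannot be absorbed into $Cx^2$. This is harmless, because $\xi=\dot x$ makes its time integral at most $C\cdot 2x_{\max}=\Order(s)$, so the bootstrap still closes.
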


We prove the claim of proposition~\ref{prop:short_geodesics} with the following three lemmas and after the proof of~\ref{prop:short_geodesics} we will proceed with proving the lemmas.

\begin{lemma}
\label{lma:height estimate}
    Let $\varepsilon>0$ be small and $0<s<\varepsilon$. Then for a geodesic $\gamma_s(t)=(z(t),\zeta(t))$ with the initial condition $(0,y,\sigma_s)$ we have $\dot{\xi}(t)<0$ for $t\in (0,\tau(0,y,\sigma_s))$ and
    \begin{equation}
    x(t) \le Cs\teksti{for} t\in [0,\tau(0,y,\sigma_s)]
    \end{equation}
    where the constant is uniform in $t$ and depends only on the metric $h$.
\end{lemma}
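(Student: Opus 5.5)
The plan is to use the conserved Hamiltonian together with the convexity identity $\dot\xi=-I\!I(\eta,\eta)$ from section~\ref{sec:analysis of exit time}. The initial condition is $(0,y,s,\eta)$. Since $x=0$ there, the Hamiltonian gives $2H=\xi^2+x^2h^{ij}\eta_i\eta_j=s^2$ along the whole geodesic. This will yield both claims.

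\textbf{Monotonicity of $\xi$.} A priori we only know $x(t)\ge 0$ and $\xi(0)=s>0$, so the first step is to keep the geodesic inside the collar $0<x<\varepsilon$, where $I\!I(\eta,\eta)>0$ for $\eta\neq0$. Within the collar, $\dot\xi=-xh^{ij}(0,y)\eta_i\eta_j+\Order(x^2)$, so $\dot\xi<0$ whenever $x>0$ and $\eta\ne 0$.

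We also need $\eta(t)\neq 0$. The equation $\dot\eta=\Order(x^2\abs{\eta}^2)$ gives a Gronwall-type bound that keeps $\abs{\eta(t)}_h$ comparable to $\abs{\eta(0)}_h=1$ for bounded $x$ and short times; this is the content of lemma~\ref{lma:h-norm bound}, which the paper already invokes in proposition~\ref{prop:asymptotic of exit time}. Interior points with $t\in(0,\tau)$ have $x>0$, since $\dot x(0)=s>0$ and $\tau$ is the first return to $x=0$. Hence $\dot\xi<0$ there.

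\textbf{Height bound.} Because $\xi$ is strictly decreasing, $x$ increases while $\xi>0$ and decreases afterwards. The maximum height $x_{\max}$ is therefore attained at the unique time where $\xi=0$. At that time the conserved energy gives
\begin{equation*}
x_{\max}^2\,h^{ij}(x_{\max},y(t))\eta_i\eta_j=s^2 .
\end{equation*}
Using $h^{ij}\eta_i\eta_j\ge c\abs{\eta}_{h(0)}^2\ge c'>0$ (from uniform equivalence of $h_x$ with $h_0$ on the compact collar, and the lower bound on $\abs{\eta}$), we get $x(t)\le x_{\max}\le s/\sqrt{c'}=Cs$. The constant depends only on $h$.

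\textbf{Closing the argument (main obstacle).} The apparent circularity is that staying in the collar and keeping $\eta$ bounded away from $0$ are used to prove the height bound, which in turn keeps the geodesic in the collar. I would resolve this with a continuity (bootstrap) argument. Let $T$ be the supremum of times at which $x\le 2Cs<\varepsilon$ and $\tfrac12\le\abs{\eta}_h^2\le 2$ hold. On $[0,T]$ the estimates above give the strict improvements $x\le Cs$ and, since $\eta$ drifts by only $\Order(s^2\,T)$, the bound $\abs{\eta}_h^2\in(\tfrac34,\tfrac43)$.

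The time bound needed here comes from the monotonicity of $\xi$ together with the explicit computation of proposition~\ref{prop:asymptotic of exit time} applied on the descending half, where $x_0\le Cs$ gives $\tau=\Order(s)$. The ascending half is symmetric by time reversal. With these improved bounds, continuity forces $T=\tau$, which proves the lemma.
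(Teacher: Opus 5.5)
Your core mechanism is the paper's. You use strict convexity $\dot\xi=-I\!I(\eta,\eta)<0$ in the collar together with conservation of $H$, which at the turning point gives $x(T)^2h(\eta(T),\eta(T))=s^2$. Your bootstrap is the right way to remove the circularity that the paper leaves informal: the paper simply picks $\varepsilon$ so that $\dot\xi<0$ and takes boundedness of $\eta$ for granted.

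The gap is in the step that closes the bootstrap.

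First, for $\gamma_s$ the exit time is of order one, not $\Order(s)$. If $h$ does not depend on $x$, then $h(\eta,\eta)\equiv1$ by lemma~\ref{lma:constant of motion}, so $\ddot x=-x$, $x(t)=s\sin t$ and $\tau=\pi$. In general only the length $l(\gamma_s)=s\tau$ is $\Order(s)$; compare lemma~\ref{lma:time estimate}.

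Second, proposition~\ref{prop:asymptotic of exit time} cannot supply even $\tau\le C$ here. It concerns unit covectors, so you would have to apply it to the rescaled covector $(1,\eta/s)$, whose $h$-norm is of size $s^{-2}\sim x_0^{-2}$. Then its expansion $-1/\xi=1+x^2h(\eta,\eta)F$ with $F$ bounded breaks down near the turning point. In the model above, descending at unit speed from rest at height $x_0$ takes $\frac{\pi}{2}x_0$, not $x_0+\Order(x_0^3)$. Moreover, its proof rests on lemma~\ref{lma:h-norm bound}, which is derived from the very lemma you are proving. So citing either result inside your bootstrap is circular.

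Your drift estimate $\Order(s^2\tau)$ still closes once $\tau\le C$ is known, but that bound has to be proved directly in the bootstrap region. There $-x\dot\xi\ge c\,x^2$ with $c>0$ independent of $s$. Hence the angle $\varphi$ of the vector $(\xi,\sqrt c\,x)$ satisfies $\dot\varphi=\sqrt c\,(\xi^2-x\dot\xi)/(\xi^2+cx^2)\ge\sqrt c$. Since $\varphi$ runs from $0$ to $\pi$ between the two boundary hits, $\tau\le\pi/\sqrt c$, which also shows $\tau<\infty$.

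Alternatively, you can avoid time bounds altogether. By lemma~\ref{lma:constant of motion}, $\frac{\der}{\der t}h(\eta,\eta)=\xi\,\partial_xh^{ij}\eta_i\eta_j=\dot x\,\partial_xh^{ij}\eta_i\eta_j$, so $\abs{\frac{\der}{\der t}\log h(\eta,\eta)}\le C\abs{\dot x}$. Because $x$ is unimodal ($\xi$ strictly decreases from $s$), the total variation of $\log h(\eta,\eta)$ is at most $2C\max x=\Order(s)$.

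Finally, your height bound does not need the turning point at all: $x(t)^2h(\eta(t),\eta(t))=s^2-\xi(t)^2\le s^2$ for every $t$.
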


\begin{lemma}
\label{lma:time estimate}
Let $(0,y,\sigma_s)$ be the initial condition of a geodesic as above. Then we have the uniform estimate for the exit time
\begin{equation}
0<\tau(0,y,\sigma_s)\le C
\end{equation}
and for the length of the geodesic $\gamma_s$
\begin{equation}
0<l(\gamma_s)=s\tau(0,y,\sigma_s)\le Cs
\end{equation}
where the constant $C$ only depends on the metric $h$.
\end{lemma}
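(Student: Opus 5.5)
The plan is to exploit conservation of the Hamiltonian together with the harmonic-oscillator-like structure of the normal dynamics near the boundary. At the initial point $x=0$, so
\begin{equation*}
H(0,y,\sigma_s)=\tfrac12 s^2,
\end{equation*}
and this is conserved along the flow. Hence $\abs{\zeta(t)}_g=s$ for all $t$, and the length is $l(\gamma_s)=\int_0^{\tau}\abs{\dot\gamma}_g\,\der t=s\,\tau(0,y,\sigma_s)$. So the second claim follows from the first, and positivity of $\tau$ is immediate because $\dot x(0)=\xi(0)=s>0$. The work is therefore the uniform upper bound $\tau\le C$.

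First, set up a bootstrap for the tangential codirection. Fix a time $T$ (a constant depending only on $h$, to be chosen below). Since $\dot\eta_i=\Order(x^2\abs{\eta}^2)$ and $\dot y^i=\Order(x^2\abs{\eta})$, and lemma~\ref{lma:height estimate} gives $x\le Cs$, a continuity argument shows that on $[0,\min(T,\tau)]$ we have $\eta(t)=\eta+\Order(s^2T)$ and $y(t)=y+\Order(s^2T)$. In particular $\tfrac12\le\abs{\eta(t)}_{h(x(t),y(t))}^2\le 2$ for $s<\varepsilon$ small. Combining this with the expansion $\dot\xi=-I\!I(\eta,\eta)=-xh^{ij}(0,y)\eta_i\eta_j+\Order(x^2)$ yields
\begin{equation*}
-C_1x\le\dot\xi\le -c\,x \quad\text{on } [0,\min(T,\tau)],
\end{equation*}
with constants depending only on $h$.

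Next, split the trajectory into three phases and bound each duration by a constant, so that any $T$ exceeding their sum is admissible.
\begin{itemize}
\item[(i)] While $\xi\ge s/2$, we have $x(t)\ge st/2$, so $\dot\xi\le -cst/2$. Hence $\xi$ falls below $s/2$ before time $t_1=\sqrt{2/c}$.
\item[(ii)] Once $\xi\le s/2$, energy conservation $\xi^2+x^2\abs{\eta}_h^2=s^2$ forces $x\ge c's$. Since $\xi$ is decreasing by lemma~\ref{lma:height estimate}, this lower bound persists as long as $\abs{\xi}\le s/2$. Therefore $\dot\xi\le -cc's$ there, and $\xi$ reaches $-s/2$ within time $1/(cc')$.
\item[(iii)] After that, $\xi\le -s/2$ for the rest of the flow, again by monotonicity of $\xi$. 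So $\dot x\le -s/2$, while $x\le Cs$ by lemma~\ref{lma:height estimate}. The geodesic thus hits $x=0$ within further time $2C$.
\end{itemize}
Summing gives $\tau\le t_1+1/(cc')+2C$, a constant depending only on $h$.

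The main obstacle is making the bootstrap in the first step consistent with the phase estimates. The bounds on $\abs{\eta}_h$ are needed to get $\dot\xi\le -cx$, but these bounds are only valid on a time interval whose length we are trying to control. I would handle this by fixing $T:=t_1+1/(cc')+2C+1$ first, using only constants from $h$. Then I shrink $\varepsilon$ so that the $\Order(s^2T)$ drift keeps $\abs{\eta}_h^2\in[\tfrac12,2]$ on $[0,\min(T,\tau)]$. Running the three-phase argument on this interval then shows $\tau<T$, which closes the argument.
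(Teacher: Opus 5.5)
Your argument is correct, but it takes a different route from the paper. The paper starts from the turning time $T$ with $\xi(T)=0$ and splits $\gamma_s$ there into two descending branches. It writes each half-time as $\int_0^{x(T)}\der x/\sqrt{s^2-x^2h(\eta,\eta)}$ and substitutes $u=x\,h(\eta,\eta)^{1/2}/s$. Along the trajectory $\der(u^2)=2s^{-2}I\!I(\eta,\eta)\,\der x$, so with the bounds on $h(\eta,\eta)$ from Lemma~\ref{lma:h-norm bound} each half becomes an arcsine integral, and $\tau\le C_h\pi$. This quadrature gives near-sharp information: when $\partial_x h=0$ it yields exactly $\tau=\pi$ and $l(\gamma_s)=s\pi$, the harmonic-oscillator picture the paper remarks on after the proof. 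Your three-phase comparison (acceleration until $\xi\le s/2$, a turning phase in which energy forces $x\ge c's$, a descent with $\dot x\le -s/2$) gives only a crude constant. In exchange, it proves rather than presupposes that the turning point is reached and that $\tau<\infty$. Your fixed-window bootstrap also resolves the circularity you identified: controlling $\eta$ along the whole geodesic needs a time bound, and the time bound needs control of $\eta$. The paper sidesteps this by treating $h_0(\eta,\eta)$ as an exact constant of motion. However, a direct computation from Hamilton's equations gives $X h_0(\eta,\eta)=x^2\bigl(h^{ij}\eta_j\partial_{y^i}h_0^{ml}\eta_m\eta_l-h_0^{il}\eta_l\partial_{y^i}h^{kj}\eta_k\eta_j\bigr)=\Order(x^3\abs{\eta}^3)$, which does not vanish in general, so your route is the more robust one. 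You can even drop the appeal to Lemma~\ref{lma:height estimate}, whose proof relies on that same conservation. On your window, $x^2\abs{\eta}_h^2\le s^2$ together with $\abs{\eta}_h^2\ge\tfrac12$ already gives $x\le\sqrt2\,s$. That is all the drift estimate and phase (iii) require, so the continuity argument closes on its own.
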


\begin{lemma}
\label{lma:h-norm bound}
Let us denote $h^{ij}(x,y)\eta_i\eta_j:=h(\eta,\eta)$ and $h^{ij}(0,y)\eta_i\eta_j:=h_0(\eta,\eta)$. Fix $h_0(\eta,\eta)=1$ at the initial condition of the geodesic $\gamma_s$. Then there exists a constant $c>0$ which depends only on the metric $h$ so that $\frac{1}{2}\le h(\eta(t),\eta(t))\le \frac{3}{2}$ along the geodesic $\gamma_s$ whenever $s\le c$.
\end{lemma}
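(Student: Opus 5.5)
We have to prove Lemma \ref{lma:h-norm bound}: along $\gamma_s$, with $h_0(\eta(0),\eta(0))=1$, we have $\tfrac12\le h(\eta(t),\eta(t))\le\tfrac32$ whenever $s\le c$.

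The plan is a continuity (bootstrap) argument in $t$. Compare $h(\eta(t),\eta(t))$ with its initial value, using the Hamilton equations to control its time derivative. At $t=0$ we have $x=0$, so $h(\eta(0),\eta(0))=h_0(\eta(0),\eta(0))=1$. Differentiating along the flow gives
\begin{equation}
\frac{\der}{\der t}h^{ij}(x,y)\eta_i\eta_j
=\partial_x h^{ij}\,\dot x\,\eta_i\eta_j+\partial_{y^k}h^{ij}\,\dot y^k\eta_i\eta_j+2h^{ij}\dot\eta_i\eta_j .
\end{equation}
We substitute the Hamilton equations: $\dot x=\xi$, $\dot y^k=x^2h^{kl}\eta_l$, and $\dot\eta_i=-\tfrac12 x^2\partial_{y^i}h^{kl}\eta_k\eta_l$. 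This yields a bound of the form
\begin{equation}
\Bigl|\frac{\der}{\der t}h(\eta,\eta)\Bigr|\le C\bigl(|\xi|\,|\eta|^2+x^2|\eta|^3\bigr).
\end{equation}
Here $|\eta|$ is measured in the fixed metric $h_0$, and $C$ depends only on $C^1$ bounds of $h^{ij}$ on the compact collar $[0,\varepsilon]\times\partial M$.

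Next we set up the bootstrap. Let $T^*$ be the supremum of the times $t'\le\tau$ such that $h(\eta,\eta)\in[\tfrac12,\tfrac32]$ on $[0,t']$. On $[0,T^*]$ the norms $h$ and $h_0$ are uniformly equivalent, so $|\eta|^2\le C$. By conservation of $H$ we have $\xi^2+x^2h(\eta,\eta)=s^2$, so $|\xi|\le s$ and $x\,h(\eta,\eta)^{1/2}\le s$; in particular $x\le\sqrt2\, s$. The derivative is therefore bounded by $C(s+s^2)\le Cs$.

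We then need a bound on the time span. This is the main obstacle: we must avoid circularity, since Lemma \ref{lma:time estimate} may itself use the present lemma. To handle this, we prove the exit-time bound inside the bootstrap interval directly. On $[0,T^*]$ we have $\dot\xi=-I\!I(\eta,\eta)=-xh(\eta,\eta)+\Order(x^2)\le -\tfrac14 x$ for $s$ small, and $\dot x=\xi$. Rescale by $x=s\tilde x$, $\xi=s\tilde\xi$; the equations become $\dot{\tilde x}=\tilde\xi$ and $\dot{\tilde\xi}\le -\tfrac14\tilde x$, with $\tilde x(0)=0$ and $\tilde\xi(0)=1$. This differential inequality is a comparison with the harmonic oscillator $\tilde x''=-\tfrac14\tilde x$, which returns to $0$ by time $2\pi$. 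By a Sturm-type comparison, $x$ must return to $0$ within a time $T_0$ independent of $s$. Hence $T^*\le\min(\tau,T_0)$.

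Integrating the derivative bound then gives $|h(\eta(t),\eta(t))-1|\le C T_0 s<\tfrac12$ for $s\le c:=1/(2CT_0)$. The bound is strict, so by continuity the interval cannot terminate before $\tau$, and hence $T^*=\tau$.

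A subtlety remains: the bootstrap must also stay inside the collar $x<\varepsilon$. This is ensured by $x\le\sqrt2\,s$ once $s$ is small, so the constants are uniform and depend only on $h$.
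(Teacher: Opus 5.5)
Your bootstrap argument is correct, and it takes a genuinely different route from the paper.

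\textbf{The paper's route.} The proof there is a two-line Taylor argument. It writes $h(\eta,\eta)=h_0(\eta,\eta)+xP(x,y,\eta)$ with $|P|\le C_1$. It then uses $h_0(\eta(t),\eta(t))\equiv 1$ along the whole geodesic, from the corollary to Lemma~\ref{lma:constant of motion}, and inserts the height bound $x\le C_2 s$ from Lemma~\ref{lma:height estimate}.

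\textbf{Your route.} You use only three ingredients, all inside the bootstrap interval:
\begin{itemize}
\item conservation of $H$, which gives $|\xi|\le s$ and $x\le\sqrt2\,s$ directly;
\item uniform equivalence of $h(x,\cdot)$ and $h_0$ on the compact collar, which bounds $\eta$;
\item a Sturm comparison, which bounds the time span by $2\pi$.
\end{itemize}
So you recover the content of Lemmas~\ref{lma:height estimate} and~\ref{lma:time estimate} along the way, without appealing to any of the other lemmas.

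\textbf{What each buys.} The paper's route is shorter once those lemmas are in hand. Yours is more robust, because the paper relies on exact conservation of $h_0(\eta,\eta)$, here and, via boundedness of $\eta$, in Lemma~\ref{lma:height estimate}. That conservation fails in general. In the proof of Lemma~\ref{lma:constant of motion}, the $y$-derivative terms cancel only when the quadratic form is the same $h$ that appears in $H$. For $h_0$ one instead gets
$\frac{\der}{\der t}h_0(\eta,\eta)=x^2\bigl(h^{ij}\partial_{y^i}h_0^{kl}-h_0^{ij}\partial_{y^i}h^{kl}\bigr)\eta_j\eta_k\eta_l$.
For instance, when $n=2$ and $h^{11}=1+x^2\cos y$, this equals $x^4\eta_1^3\sin y$. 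The drift is small, but exploiting that requires exactly the a priori control of $x$, $\eta$ and the time span that your bootstrap supplies.

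\textbf{A simplification.} Lemma~\ref{lma:constant of motion} gives exactly $\frac{\der}{\der t}h(\eta,\eta)=\dot x\,\partial_x h^{ij}\eta_i\eta_j$. Since $\dot\xi\le-\tfrac14 x$ on your bootstrap interval, $x$ rises once and then falls once. Hence $|h(\eta,\eta)-1|\le C\int|\dot x|\,\der t\le 2\sqrt2\,Cs$ with no time bound at all. The Sturm step is then needed only if you also want $\tau\le 2\pi$.
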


From the lemmas we observe some immediate geometric facts about the short geodesics $\gamma_s$. For $0<s<1$ the projections $\pi(\gamma_s(t))\in M$ are pregeodesics in the sense that $\gamma_s$ are solutions of Hamilton's equations but the paths $\pi(\gamma_s(t))$ are not isometric embeddings of the parameter interval onto the manifold. If one wishes to change the mapping to the isometric embedding $\gamma_s\colon[0,l(\gamma_s)]\to M$ then from the length formula in lemma~\ref{lma:time estimate}, which is an elementary formula of type ''distance = speed $\times$ time'', we see that one has to scale $s\mapsto 1$. Since we can think of $T^*_{(0,y)}M$ as a vector space then this reparametrization corresponds to mapping the initial condition with the gnomonic projection $\sigma(s)\mapsto \sigma(s)/s=(1,\eta/s)\in S^*_{(0,y)} M$. It is more popular to state and use results similar to proposition~\ref{prop:short_geodesics} in the unit (co)sphere bundle~\cite{dHIKM2024,GGSU2019,LSU2003,PSU2023} due to the analysis of geodesics being more straight-forward but now we see that on gas giants in the limit $s\to 0$ the reparametrized initial condition $(1,\eta/s)$ on $S^* M$ will blow up and so on gas giants it is more natural to change the speed of geodesics instead of analyzing the unit speed geodesics. We proceed by proving the short geodesic result of proposition~\ref{prop:short_geodesics}.

\begin{proof}[Proof of proposition \ref{prop:short_geodesics}]
    Let $\varepsilon>0$ be small enough so that the all of the lemmas~\ref{lma:height estimate},~\ref{lma:time estimate} and~\ref{lma:h-norm bound} hold. Throughout the proof we denote any uniform constant by $C>0$. Since the $h$-metric is smooth then by lemmas~\ref{lma:height estimate},~\ref{lma:h-norm bound} and Hamilton's equations we get for all $i=1,\dots ,n-1$ that
    \begin{equation}
    \abs{\dot{y}^i(t)}\le C x(t)^2 \le Cs^2\teksti{and}\abs{\dot{\eta}_i(t)}\le Cs^2.
    \end{equation}
    Hence by mean value theorem and lemma~\ref{lma:time estimate}
    \begin{align}
        &\abs{y^i(t)-y^i(0)}\le Cs^2t\le Cs^2\tau\le Cs^{2}\\
        &\abs{\eta_i(t)-\eta_i(0)}\le Cs^2t\le Cs^2\tau\le Cs^{2}
    \end{align}
    for all $t\in [0,\tau(0,y,\sigma_s)]$ and $i=1,\dots ,n-1$. By lemma~\ref{lma:height estimate} we know that $\xi(t)$ is strictly decreasing along the geodesic. By combining this with the constant speed condition of the geodesic $\gamma_s$ we get $-s\le \xi(t)\le s$ so
    \begin{equation}
    \abs{\xi(t)-\xi(0)}\le 2s
    \end{equation}
    for all $t\in [0,\tau(0,y,\sigma_s)]$. By lemma~\ref{lma:height estimate} we also have
    \begin{equation}
    x(t)=\abs{x(t)-x(0)}\le Cs
    \end{equation}
    for all $t\in [0,\tau(0,y,\sigma_s)]$. Hence taking the limit $s\to 0$ and using the uniform estimates above give us the claim of uniform convergence. From lemma~\ref{lma:time estimate} we get $l(\gamma_s)\to 0$ as $s\to 0$.
\end{proof}

We will now prove lemmas~\ref{lma:height estimate} and~\ref{lma:h-norm bound} by analyzing the flow for small enough $s$. From Hamilton's equations we see that if we choose $s$ small enough then the geodesic stays in a region where $\dot{\xi}(t)<0$ for $t\in (0,\tau(0,y,\sigma_s))$. We will give a more precise argument for this property below. Whenever the geodesic stays in such a region there exists a time $T\in (0,\tau(0,y,\sigma_s))$ so that $0<x(t)\le x(T)$ for $t\in (0,\tau(0,y,\sigma_s))$ and $\xi(T)=0$. In particular $0<T<\tau(0,y,\sigma_s)$ so each of the geodesics $\gamma_s$ has positive length. The goal is to analyze the constant speed condition at this time $T$ and to do that we need to estimate the $h$-norm. We prove the following useful lemma.
\begin{lemma}
\label{lma:constant of motion}
    Let $h(\eta,\eta)=h^{ij}(x,y)\eta_i\eta_j$ and $\eta\not=0$. We have $Xh(\eta,\eta)=0$ along a geodesic near the boundary if and only if $\partial_x h=0$. If this condition holds then the $h$-norm is a constant of motion for the geodesic flow.
\end{lemma}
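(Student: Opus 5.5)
We compute $Xh(\eta,\eta)$ directly from the coordinate form of $X$ given in section~\ref{sec:preliminaries}, where $h(\eta,\eta)=h^{ij}(x,y)\eta_i\eta_j$ is viewed as a function on $T^*M$ near the boundary. The terms of $X$ are $\xi\partial_x$, $x^2h^{ij}\eta_j\partial_{y^i}$, a $\partial_\xi$-term, and $-\frac12 x^2\partial_{y^i}h^{kj}\eta_k\eta_j\partial_{\eta_i}$. Since $h(\eta,\eta)$ does not depend on $\xi$, the $\partial_\xi$-term drops out. We are left with
\begin{equation*}
Xh(\eta,\eta)=\xi\,\partial_x h^{ij}\eta_i\eta_j+x^2h^{kl}\eta_l\,\partial_{y^k}h^{ij}\eta_i\eta_j-x^2 h^{ij}\eta_j\,\partial_{y^i}h^{kl}\eta_k\eta_l .
\end{equation*}
The second and third terms cancel after relabelling indices, using the symmetry of $h^{ij}$. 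This cancellation is just the statement that $h(\eta,\eta)$ Poisson commutes with the $y$-part of the Hamiltonian, i.e.\ $\{h^{ij}\eta_i\eta_j,h^{kl}\eta_k\eta_l\}=0$, with the common factor $x^2$ carried along. Hence
\begin{equation*}
Xh(\eta,\eta)=\xi\,(\partial_x h^{ij})(x,y)\,\eta_i\eta_j .
\end{equation*}

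The ``if'' direction is then immediate: if $\partial_x h=0$ then $\partial_x h^{ij}=-h^{im}h^{jk}\partial_x h_{mk}=0$, so $Xh(\eta,\eta)=0$. Because $X$ generates the flow, $\frac{\der}{\der t}h(\eta(t),\eta(t))=Xh(\eta,\eta)(\gamma(t))=0$, and the $h$-norm is a constant of motion.

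For the ``only if'' direction, assume $Xh(\eta,\eta)=0$ along geodesics near the boundary for all initial data with $\eta\neq0$. Then $\xi\,\partial_x h^{ij}\eta_i\eta_j=0$ at every point of the flow near the boundary. Through any point $(x,y)$ with $x>0$ small and any $\eta\neq0$ we may choose $\xi\neq0$, using $\xi$ as a free initial variable of $T^*M$, or of $S^*M$ after fixing $\xi^2=1-x^2h(\eta,\eta)>0$ for small $x$. This gives $\partial_x h^{ij}(x,y)\eta_i\eta_j=0$ for all $\eta$. Polarizing the quadratic form yields $\partial_x h^{ij}=0$, and therefore $\partial_x h_{ij}=0$.

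The only step needing care is the index bookkeeping in the cancellation of the two $y$-derivative terms, together with the interpretation of ``along a geodesic'' in the converse. The converse holds only if the identity is assumed for all geodesics, or at least for those with $\xi\neq0$ through each point, so that the factor $\xi$ can be divided out. Geodesics with initial data $(0,y,0,\eta)$ are stationary and would give no information, so they must be excluded.
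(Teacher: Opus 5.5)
Your proof is correct and follows the paper's argument. The paper likewise applies the coordinate form of $X$ to $h^{ij}\eta_i\eta_j$, lets the two $y$-derivative terms cancel to get $Xh(\eta,\eta)=\xi\,\partial_x h^{ij}\eta_i\eta_j$, and stops there; your explicit handling of the converse (assuming the identity for all geodesics with $\xi\neq0$ through each point, then polarizing) fills in a step the paper leaves implicit.
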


\begin{proof}
    By Liouville's theorem
    \begin{equation}
    \begin{split}
        \frac{\der }{\der t} h(\eta,\eta)
        &=
        Xh(\eta,\eta)
        \\
        &=
        \xi \partial_xh^{ij}\eta_i\eta_j
        +
        x^2 h^{ij}\eta_j\partial_{y^i}h^{ml}\eta_m\eta_l
        \\
        &\quad
        -
        \frac{1}{2}x^2\partial_{y^i}h^{kj} \eta_k\eta_jh^{ml}(\delta^i_{\ m}\eta_l+\eta_m\delta^i_{\ l})\\
        &=\xi\partial_x h^{ij}\eta_i\eta_j. \qedhere
    \end{split}
    \end{equation}
\end{proof}

From the lemma and its proof we observe the immediate corollary regarding the boundary $h_0$-norm.
\begin{corollary}
    Let $h_0(\eta,\eta)=h^{ij}(0,y)\eta_i\eta_j$. Then $h_0(\eta,\eta)$ is a constant of motion for the geodesic flow.
\end{corollary}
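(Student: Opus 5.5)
The plan is to read the corollary as the special case of lemma~\ref{lma:constant of motion} in which the tensor in the Hamiltonian has no $x$-dependence. I will make this explicit, and I will also say precisely what holds for the original flow, where I do not expect exact conservation in general.

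\emph{Step 1 (set-up).} Regard $h_0(\eta,\eta)=h^{ij}(0,y)\eta_i\eta_j$ as a function on the whole collar $[0,\varepsilon)\times T^*\partial M$, extended constantly in $x$ and $\xi$. Consider the gas giant metric $g_0=\der x^2+x^{-2}h(0,y,\der y)$ on the collar. Its Hamiltonian is $H_0=\frac12\xi^2+\frac12x^2h^{ij}(0,y)\eta_i\eta_j$, and its Hamiltonian vector field $X_0$ is the expression for $X$ with $h^{ij}(x,y)$ replaced by $h^{ij}(0,y)$. Since $\partial_x h_0=0$, lemma~\ref{lma:constant of motion} applied to $g_0$ gives $X_0h_0(\eta,\eta)=0$. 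So $h_0(\eta,\eta)$ is a constant of motion for this flow.

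\emph{Step 2 (the cancellation).} For completeness I would redo the computation from the proof of the lemma with the frozen tensor. The term $\xi\,\partial_x h_0^{ij}\eta_i\eta_j$ vanishes identically. The $\partial_y$-term coming from $\dot y^i=x^2h_0^{ij}\eta_j$ is cancelled by the term coming from $\dot\eta_i=-\frac12x^2\partial_{y^i}h_0^{kj}\eta_k\eta_j$: after symmetrizing $\delta^i_{\ m}\eta_l+\eta_m\delta^i_{\ l}$ the two contributions are equal and of opposite sign. This is exactly the identity $\{H_0,h_0(\eta,\eta)\}=x^2\{h_0(\eta,\eta),h_0(\eta,\eta)\}_{T^*\partial M}/2=0$.

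\emph{Step 3 (the original flow $X$).} Here I would compute $Xh_0(\eta,\eta)$ directly. The $\xi\partial_x$ part vanishes, and what remains is
\[
x^2\bigl(h^{ij}(x,y)\eta_j\,\partial_{y^i}h_0^{ml}\eta_m\eta_l-\partial_{y^i}h^{kj}(x,y)\eta_k\eta_j\,h_0^{il}\eta_l\bigr).
\]
This vanishes at $x=0$; indeed on $\{x=0\}$ the field reduces to $X=\xi\partial_x$, so $y$ and $\eta$ are frozen there. Taylor expanding $h(x,y)=h_0(y)+\Order(x)$ shows the bracket is $\Order(x)$, so $Xh_0(\eta,\eta)=\Order(x^3\abs{\eta}^2)$ uniformly.

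\emph{Main obstacle.} The only genuinely delicate point is Step 3. For a general $x$-dependent $h$ the identity is exact only for the model flow $X_0$ or on the boundary itself. For the true flow it holds up to an $\Order(x^3)$ error, which is what the applications in lemmas~\ref{lma:height estimate}--\ref{lma:h-norm bound} need along short geodesics, where $x\le Cs$. I would state the corollary in the form of Step 1, that is, exactly for the flow with $\partial_xh=0$ as in the lemma's hypothesis. I would then record the Step 3 estimate, which makes rigorous the ``immediate'' passage from the lemma to the corollary.
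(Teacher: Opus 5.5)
You don't prove the corollary as written, and you are right not to try. For the true geodesic flow of a general gas giant the statement is false, and the paper's justification (``immediate from the lemma and its proof'') does not hold up. The cancellation in the proof of lemma~\ref{lma:constant of motion} needs the same tensor in the Hamiltonian and in the observed quadratic form. Keep $h^{ij}(x,y)$ in $H$ but observe $h_0(\eta,\eta)$. Hamilton's equations then give exactly
\[
\frac{\der}{\der t}h_0(\eta,\eta)=\frac{x^2}{2}\bigl\{h^{ij}(x,y)\eta_i\eta_j,\;h^{kl}(0,y)\eta_k\eta_l\bigr\}_{T^*\partial M},
\]
with $x$ frozen inside the bracket and $\{F,G\}=\partial_{\eta_i}F\,\partial_{y^i}G-\partial_{y^i}F\,\partial_{\eta_i}G$. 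This vanishes identically only under a compatibility condition such as $\partial_xh=0$ or $h^{ij}(x,y)=c(x)h^{ij}(0,y)$.

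Here is a genuine gas giant where it fails. Take $n=2$, $\bar g=\der s^2+(1+s\cos y)\,\der y^2$ and $\rho=s$ near the boundary. Substituting $s=x^2/4$ gives $g=\der x^2+x^{-2}(4+x^2\cos y)\,\der y^2$, so $h_0(\eta,\eta)=\eta^2/4$. Then $\dot\eta=-\frac12x^2\partial_yh^{11}\eta^2$ yields
\[
\frac{\der}{\der t}\frac{\eta^2}{4}=-\frac{x^4\sin y\,\eta^3}{4(4+x^2\cos y)^2},
\]
which is nonzero whenever $x>0$, $\eta\neq0$ and $\sin y\neq0$. So your Step~1 (exact conservation under the lemma's hypothesis $\partial_xh=0$) and Step~3 (approximate conservation for the true flow) are the correct content. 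The paper's corollary claims more than is true.

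Step~3 needs two corrections.
\begin{enumerate}
\item The bracket is cubic in $\eta$, so the bound is $\Order(x^3\abs{\eta}^3)$, not $\Order(x^3\abs{\eta}^2)$.
\item More importantly, do not import $x\le Cs$ from lemma~\ref{lma:height estimate}. In the paper that lemma relies on the boundedness of $\eta$, which is taken from this very corollary, so the reasoning would be circular. Use the energy instead. For initial data $(0,y,s,\eta)$ one has $x^2h^{ij}(x,y)\eta_i\eta_j\le s^2$, hence $x\abs{\eta}\le Cs$ in the collar. Your estimate then becomes $\abs{\frac{\der}{\der t}h_0(\eta,\eta)}\le Cs^3$ with no further input.
\end{enumerate}

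To recover the downstream lemmas, note that $\abs{\dot x}=\abs{\xi}\le s$ keeps the geodesic in the collar up to any fixed time once $s$ is small. Then run a continuity argument maintaining $h_0(\eta,\eta)\in[\frac12,\frac32]$. This gives $\ddot x\le -cx$, hence exit before a fixed time by Sturm comparison. That recovers lemmas~\ref{lma:height estimate}--\ref{lma:h-norm bound} with $h_0(\eta,\eta)=1+\Order(s^3)$ in place of $h_0(\eta,\eta)\equiv1$.
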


This means that if we fix the initial condition $(0,y,s,\eta)$ with $h(\eta,\eta)=h_0(\eta,\eta)=1$ then $h_0(\eta(t),\eta(t))\equiv 1$ along the geodesic. This also means that the components of $\eta$ have to be bounded uniformly along the flow since $h_0$ is smooth. It follows that if we reparametrize the initial condition $(s,\eta)\mapsto (1,\eta/s)$ of the short geodesic $\gamma_s$ of proposition~\ref{prop:short_geodesics} then along the unit speed version of the geodesic flow on the cotangent bundle we have 
\begin{equation}
\frac{\der \xi(\Tilde{t})}{\der \Tilde{t}}=-I\! I \big (\eta(\Tilde{t})/s,\eta(\Tilde{t})/s\big)=-I\! I\big (\eta(\Tilde{t}),\eta(\Tilde{t})\big )/s^2
\end{equation}
where $\eta( \Tilde{t})$ is bounded along the flow and $\Tilde{t}=st$ is the parameter along the unit speed version of the geodesic $\gamma_s$. By choosing $s>0$ small we see that $\xi(\Tilde{t})$ decreases faster near the boundary where $I\! I(\eta,\eta)>0$ and hence for small enough $s$ the distance from the boundary $x(\Tilde{t})$ reaches its maximum in a region where $\der \xi (\Tilde{t})/\der \Tilde{t}<0$. Since $\der \xi (\Tilde{t})/\der \Tilde{t}<0$ holds along the unit speed version of $\gamma_s$ then also $\dot{\xi}< 0$ has to hold in the case where we have speed $s$ along the geodesic $\gamma_s$ whenever we choose $s$ small enough. With this discussion we are now ready to prove lemma~\ref{lma:height estimate}.

\begin{proof}[Proof of lemma~\ref{lma:height estimate}]
Let $\varepsilon>0$ be small enough so that for $0<s<\varepsilon$ we have $\dot{\xi}(t)<0$ for $t\in (0,\tau (0,y,\sigma_s))$. Fix $h(\eta,\eta)=1$ for the initial condition. Then we have a time $T$ for which $x(T)$ is maximum of $x$ along the flow and $\xi(T)=0$. Taylor approximating the functions $h^{ij}(x(T),y(T))$ w.r.t.\ $x(T)$ at $x(T)=0$ gives the following form for the constant speed condition of the geodesic
\begin{equation}
\begin{split}
    0
    &=
    \xi(T)^2
    \\
    &=
    s^2-x(T)^2 h(\eta(T),\eta(T))
    \\
    &=
    s^2-x(T)^2 h_0(\eta(T),\eta(T))+\Order(x(T)^{3})
    \\
    &=
    s^2-x(T)^2 +\Order(x(T)^{3})
\end{split}
\end{equation}
By the smoothness of $h$ and boundedness of $\eta$ the remainder term $\Order(x(T)^{3})$ is bounded with a uniform constant that only depends on the metric $h$. We see that $x(T)^2$ is comparable to $s^2$. By iteration on the remainder terms we get
\begin{equation}
    x(T)^2 =s^2+\Order(s^{3})\le s^2 +Cs^{3}=s^2(1+Cs)\le s^2(1+C)
\end{equation}
where $C>0$ denotes any uniform constant that only depends on the metric $h$ and we used an arbitrary bound $s\le 1$. 
\end{proof}
With the same ideas we find the proof of lemma~\ref{lma:h-norm bound}

\begin{proof}[Proof of lemma~\ref{lma:h-norm bound}]
Fix $h_0(\eta,\eta)\equiv 1$. Then by Taylor expanding at any point along the geodesic flow we have
\begin{equation}
    h(\eta,\eta)=h_0(\eta,\eta)+xP(x,y,\eta)=1+xP(x,y,\eta)
\end{equation}
where $\abs{P(x,y,\eta)}\le C_1$ so that $C_1>0$ is a uniform constant that only depends on the metric $h$. By lemma~\ref{lma:height estimate} we have $x\le C_2s$ for $C_2>0$ so that
\begin{equation}
    h(\eta,\eta)\le 1+C_1C_2 s\le \frac{3}{2}
\end{equation}
holds when
\begin{equation}
s\le \frac{1}{2C_2 C_1}
\end{equation}  
A similar argument gives the lower bound for $h(\eta(t),\eta(t))$ in the claim.
\end{proof}

Finally we will prove lemma~\ref{lma:time estimate} by making the proof of proposition~\ref{prop:asymptotic of exit time} more precise and using the lemmas~\ref{lma:height estimate} and~\ref{lma:h-norm bound}.

\begin{proof}[Proof of lemma~\ref{lma:time estimate}]
Let $\gamma_s$ be the geodesic with the initial condition $(0,y,\sigma_s)$. Then there exists a time $T$ for which $\xi(T)=0$. We may identify the geodesic $\gamma_s$ as the combination of two geodesics $\gamma_\pm$ so that $\gamma_+$ is the geodesic with the initial condition $(x(T),y(T),0,\eta(T))$ and $\gamma_-$ is the geodesic with the initial condition $(x(T),y(T),0,-\eta(T))$. Denote the exit time of $\gamma_+$ by $\tau_+$ and the exit time of $\gamma_-$ by $\tau_-$. Then $\tau(0,y,\sigma_s)=\tau_+ +\tau_-$ and we use the integral from the proof of proposition~\ref{prop:asymptotic of exit time} to estimate $\tau_\pm$. Note that along the geodesics $\gamma_\pm$ we have $\xi<0$ for small $s>0$ so we may change the integration variable in the same way as in the proof of proposition~\ref{prop:asymptotic of exit time}. We have
\begin{equation}
\begin{split}
    \tau_\pm
    &=
    \int_0^{\tau_\pm}\der t
    \\
    &=
    \int_0^{x(T)} -\frac{1}{\xi}\der x
    \\
    &=
    \int_0^{x(T)} \frac{1}{\sqrt{s^2-x^2 h(\eta,\eta)}}\der x
    \\
    &=
    \int_0^{x(T)}\frac{1}{s\sqrt{1-\frac{x^2}{s^2}h(\eta,\eta)}}\der x
\end{split}
\end{equation}
Observe that in the last integral the function $x^2 h(\eta,\eta)=s^2-\xi^2$ is strictly increasing w.r.t. $x$ from $x=0$ to $x=x(T)$ since by choosing $s$ small we know that $\xi$ is strictly decreasing along this interval. Hence we may introduce the following change of variables
\begin{equation}
    u^2=\frac{x^2}{s^2}h(\eta,\eta)\teksti{with} \der u^2=2u\der u=\frac{2}{s^2}I\! I(\eta,\eta)\der x
\end{equation}
so
\begin{equation}
    \der x=\frac{s^2 u}{I\! I(\eta,\eta)}\der u=\frac{2s xh(\eta,\eta)^{1/2}}{2xh(\eta,\eta)+x^2\partial_x h(\eta,\eta)}\der u.
\end{equation}
For $x>0$ and $s$ small enough we have by lemmas~\ref{lma:height estimate} and~\ref{lma:h-norm bound} that also $x$ is small and $h(\eta,\eta)$ is bounded and strictly positive so
\begin{equation}
    \frac{2 xh(\eta,\eta)^{1/2}}{2xh(\eta,\eta)+x^2\partial_x h(\eta,\eta)}=\frac{2h(\eta,\eta)^{1/2}}{2h(\eta,\eta)+x\partial_x h(\eta,\eta)}\le C_h
\end{equation}
where $C_h$ is a uniform constant that only depends on the metric $h$. Therefore the change of variables is well-defined and we obtain
\begin{equation}
\begin{split}
    \tau_\pm
    &=
    \int_0^{x(T)}\frac{1}{s\sqrt{1-\frac{x^2}{s^2}h(\eta,\eta)}}\der x
    \\
    &\le
    \int_0^1 \frac{1}{\sqrt{1-u^2}}C_h\der u
    \\
    &=
    C_h\arcsin (1)
    \\
    &=
    \frac{C_h\pi}{2}
\end{split}
\end{equation}
and by $\tau=\tau_+ + \tau_-$ we get a similar estimate for the exit time of $\gamma_s$. The estimate for the length of the geodesic follows from
\begin{equation}
l(\gamma_s)
=
\int_0^{\tau(0,y,\sigma_s)}s \der t
=
s\tau(0,y,\sigma_s).
\qedhere
\end{equation}
\end{proof}

Observe that from the proof we have the stronger result that if the $h$-norm $h(\eta,\eta)$ is a constant of motion for the geodesic flow as in lemma~\ref{lma:constant of motion}, which holds when e.g.\ the background metric $\overline{g}$ of the gas giant is Euclidean, then lemma~\ref{lma:time estimate} can be improved to say that $\tau(0,y,\sigma_s)=\pi$ and the length of the short geodesics are given by $l(\gamma_s)=s\pi$. In the case when $\overline{g}$ is Euclidean and the dimension of the gas giant is two one can solve Hamilton's equations as in~\cite[chapter 11]{CC2009} and recover these same observations about the exit time $\tau=\pi$ and the length of the geodesics $l(\gamma_s)=s\pi$ so we have essentially generalized these observations in lemma~\ref{lma:time estimate}.

\subsection{Boundary determination of 1-forms}

Here we prove a boundary determination result for smooth 1-forms which have a vanishing geodesic X-ray transform over maximal geodesics. We will then use this result to prove lemma~\ref{keylemma:bdy-det}. We start by applying a gauge correction to the smooth 1-form as in the following lemma.

\begin{lemma} [Gauge correction]
\label{lma:gauge correction gas giant}
     Let $(M,g)$ be a gas giant.
     Let $f$ be a smooth 1-form with $If =0$. Then near the boundary $\partial M$ there exists a scalar function $q$ so that $q|_{\partial M}=0$ and for $\Tilde{f}:=f-\der q$ we have $I\Tilde{f}=0$ and $\Tilde{f}_0\equiv 0$ in any collar neighborhood of the boundary.
\end{lemma}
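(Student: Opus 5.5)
We construct $q$ by integrating the normal component of $f$ along the normal geodesics $t\mapsto(t,y)$, which in the coordinates $(x,y)$ are the $x$-lines. In the collar $[0,\varepsilon)\times\partial M$ we write $f=f_0\,\der x+f_i\,\der y^i$ and set
\begin{equation}
q(x,y):=\int_0^x f_0(t,y)\,\der t .
\end{equation}
Since $f_0$ is smooth up to the boundary in the $(x,y)$ structure, $q$ is smooth on $[0,\varepsilon)\times\partial M$, satisfies $q(0,y)=0$, and $\partial_x q=f_0$. Consequently $\Tilde f:=f-\der q$ has components
\begin{equation}
\Tilde f_0=f_0-\partial_x q=0,\qquad \Tilde f_i=f_i-\partial_{y^i}q ,
\end{equation}
and both are smooth in the collar. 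To get a globally defined function for lemma~\ref{keylemma:bdy-det}, we multiply $q$ by a cutoff $\chi(x)$ equal to $1$ near $x=0$ and supported in the collar. This keeps $q|_{\partial M}=0$, keeps $q$ supported near $\partial M$, and gives $\Tilde f_0\equiv0$ on the smaller collar where $\chi\equiv1$. That is the sense of ``in any collar neighborhood'': for any prescribed collar width below the width of the normal coordinate chart, the cutoff can be chosen accordingly.

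It remains to check $I\Tilde f=0$, which reduces to showing $I(\der q)=0$. For a maximal geodesic $\gamma\colon[0,\tau]\to M$ from $\partial M$ to $\partial M$, the fundamental theorem of calculus gives
\begin{equation}
\int_0^\tau \ip{\der q(\gamma(t))}{\dot\gamma(t)}\,\der t=q(\gamma(\tau))-q(\gamma(0))=0,
\end{equation}
because $q$ vanishes on $\partial M$. Equivalently, $X\pi^*q=\lambda\,\der q$ on $S^*M$ and we integrate along the flow. Linearity of $I$ then gives $I\Tilde f=If-I\der q=0$.

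The only delicate point is the endpoint behaviour in this fundamental theorem of calculus. Geodesics reach $\partial M$ in finite time, by proposition~\ref{prop:asymptotic of exit time} near the boundary and by the non-trapping assumption globally. Moreover $q\circ\gamma$ is continuous on the closed interval $[0,\tau]$, since $q$ is smooth up to the boundary in the $(x,y)$ structure and $\gamma(t)=\pi(\phi_t)$ is continuous up to the exit time in that structure, the Hamiltonian flow being smooth up to $x=0$. So the computation is legitimate.

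I would also note that $\der q$ pairs with $\dot\gamma=\zeta^\sharp=(\xi,x^2h^{ij}\eta_j)$ to give a function that is smooth up to $\partial S^*M$. Hence $\lambda\Tilde f$ retains the same regularity as $\lambda f$, which is what the subsequent boundary determination will need.
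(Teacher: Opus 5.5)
Your proposal is correct and follows the paper's proof essentially verbatim. It uses the same primitive $q(x,y)=\int_0^x f_0(s,y)\,\der s$, the same cutoff $\chi(x)$, and the same fundamental-theorem-of-calculus argument for $I(\der q)=0$ from $q|_{\partial M}=0$. Your extra endpoint-continuity remark is a harmless supplement, though non-trapping is not actually needed: the lemma does not assume it, and the argument applies to each maximal geodesic separately.
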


\begin{proof}
    We will use similar ideas as in~\cite[proof of lemma 4]{SU2005}. For a neighborhood $U\subset \partial M$ let 
    \begin{align}
        q(x,y)=\int_0^x f_0(s,y)\der s\teksti{for} (x,y)\in [0,1)\times U
    \end{align}
    By smoothness of $f$ we have $q(0,y)\equiv 0$. Clearly $\lambda \der q= g^{-1}(\der q,\zeta)=Xq$ for any scalar function $q$ so by fundamental theorem of calculus $I(\der q)=0$. By construction we have $\partial_x q=f_0$. Modify $q$ with a cut-off function $\chi(x)$ so that $\chi(x)=0$ when $x>1$ and $\chi(x)=1$ when $x\le 1/2$ and denote this modified function $q \chi$ also by $q$. Setting $\Tilde{f}:=f-\der q$ and using everything we have shown above proves the claim.
\end{proof}

From here on we will identify any smooth 1-form $f$ with $I f=0$ to the gauge corrected 1-form $\Tilde{f}=f-\der q$ of lemma~\ref{lma:gauge correction gas giant}. Next we will show a high-order boundary determination result for which the proof follows the same ideas as~\cite[proof of theorem 2.1]{LSU2003}.

\begin{proposition}[Boundary determination]
\label{prop:boundary determination}
Let $(M,g)$ be a gas giant.
    Let $f$ be a smooth 1-form so that $I f=0$ and assume we have applied the gauge correction of lemma~\ref{lma:gauge correction gas giant} to $f$. Then in boundary normal coordinates $(x,y)\in [0,1)\times \partial M$ we have
    \begin{equation}
    \frac{\partial^k f_i}{\partial x^k}(0,y)=0 \teksti{for} 0\le k\in \N.
    \end{equation}
\end{proposition}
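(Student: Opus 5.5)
We argue by induction on $k$, following the scheme of \cite[proof of theorem 2.1]{LSU2003} with the short geodesics of proposition~\ref{prop:short_geodesics} in place of the usual unit-speed short geodesics. After the gauge correction of lemma~\ref{lma:gauge correction gas giant} we have $f=f_i(x,y)\,\der y^i$ in the collar. Along the speed-$s$ geodesic $\gamma_s$ with initial condition $(0,y,s,\eta)$, $\abs{\eta}_h=1$, the integrand of the X-ray transform is
\begin{equation}
f_\mu\dot\gamma^\mu = f_i(x(t),y(t))\,x(t)^2h^{ij}(x(t),y(t))\eta_j(t).
\end{equation}
Since $If=0$ and $I$ is unchanged by reparametrization to constant speed, we get
\begin{equation}
0=\int_0^{\tau(0,y,\sigma_s)} f_i(x(t),y(t))\,x(t)^2h^{ij}\eta_j\,\der t .
\end{equation}

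For the inductive step, suppose $\partial_x^l f_i(0,\cdot)=0$ for $l<k$. Taylor's theorem then gives
\begin{equation}
f_i(x,y)=\tfrac{x^k}{k!}\,\partial_x^k f_i(0,y)+\Order(x^{k+1}).
\end{equation}
By proposition~\ref{prop:short_geodesics} and the estimates in its proof, $y(t)=y+\Order(s^2)$ and $\eta(t)=\eta+\Order(s^2)$ uniformly. Lemma~\ref{lma:height estimate} gives $x(t)\le Cs$, and lemma~\ref{lma:time estimate} gives $\tau\le C$. Substituting these yields
\begin{equation}
0=\tfrac{1}{k!}\,\partial_x^kf_i(0,y)\,h_0^{ij}(y)\eta_j\int_0^{\tau}x(t)^{k+2}\,\der t+\Order(s^{k+3}).
\end{equation}
We then need a lower bound $\int_0^\tau x^{k+2}\,\der t\ge c\,s^{k+2}$ with $c>0$ uniform in $s$. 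To obtain it, rescale $x=s\tilde x$. Along $\gamma_s$ we have $\xi^2+x^2h(\eta,\eta)=s^2$ with $h(\eta,\eta)=1+\Order(s)$, so $\tilde x$ solves, up to $\Order(s)$ errors, the harmonic oscillator $\tilde x'' = -\tilde x$ with $\tilde x(0)=0$ and $\tilde x'(0)=1$. This is the model case with $\tau=\pi$ noted after lemma~\ref{lma:time estimate}. Hence $\tilde x(t)\to\sin t$ and $\tau\to\pi$ as $s\to0$. As an alternative to the convergence argument, one can use the changes of variables $x\mapsto u$ from the proof of lemma~\ref{lma:time estimate}, where the Jacobian is bounded below as well as above. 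Either way,
\begin{equation}
s^{-(k+2)}\int_0^\tau x^{k+2}\,\der t\to\int_0^\pi \sin^{k+2}t\,\der t>0.
\end{equation}
Dividing the identity above by $s^{k+2}$ and letting $s\to0$ gives $\partial_x^kf_i(0,y)\,h_0^{ij}(y)\eta_j=0$ for every unit $\eta\in T^*_y\partial M$. Since $h_0$ is nondegenerate, $\partial_x^kf_i(0,y)=0$. The base case $k=0$ is the same argument with no inductive hypothesis.

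The main obstacle is making the error terms rigorous. The Taylor remainder $\Order(x^{k+1})$ contributes $\Order(s^{k+3})$ after integration, which requires the bound $\tau\le C$. Freezing $(y,\eta)$ costs $\Order(s^2)\cdot s^{k+2}$. These bounds are routine from the lemmas. The delicate point is the uniform positive lower bound on $s^{-(k+2)}\int_0^\tau x^{k+2}\,\der t$, and I would handle it first via the change of variables in lemma~\ref{lma:time estimate}, splitting $\gamma_s$ into the two monotone pieces $\gamma_\pm$.
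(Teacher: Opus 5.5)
Your proof is correct and follows the same scheme as the paper's: the gauge-corrected integrand $x^2h^{ij}f_i\eta_j$ along the speed-$s$ short geodesics of proposition~\ref{prop:short_geodesics}, a Taylor expansion in $x$, and induction on $k$. The only difference is the closing step. The paper argues by contradiction from $h^{ij}(0,y)\partial_x^kf_i(0,y)\eta_j>0$ on a neighborhood of $(y,\eta)$, and since $x(t)\le Cs$ makes every remainder a pointwise relative $\Order(s)$ correction of $x(t)^{k+2}$, the integrand is then strictly positive on $(0,\tau)$ for small $s$. So the uniform lower bound on $s^{-(k+2)}\int_0^\tau x^{k+2}\,\der t$, which you single out as the delicate point and correctly establish via the harmonic-oscillator limit, is valid but not actually needed.
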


\begin{proof}
    Fix the point $y\in \partial M$. We begin by proving the case $k=0$. Suppose to the contrary that $f_i(0,y)\not=0$ for $i=1,\dots,n-1$. Then without loss of generality we can assume that there is $\eta\in T^*_y \partial M$ with $\abs{\eta}_h=1$ so that 
    \begin{equation}
        h^{ij}(0,y) f_i(0,y) \eta_j>0.
    \end{equation}
    By continuity there is a neighborhood $U\subset T^* \partial M$ of the point $(y,\eta)$ where we also have
    \begin{align}
    \label{eqn:positivity in boundary determination}
        h^{ij}(0,y) f_i(0,y) \eta_j>0\teksti{for} (y,\eta)\in U.
    \end{align}
    Let $\gamma_s$ be the short geodesic with the initial condition $(0,y,\sigma_s)$ as in proposition~\ref{prop:short_geodesics}. Then for small $\varepsilon>0$ and for any $s\in (0,\varepsilon )$ the geodesic $\gamma_s$ has non-zero length and $\gamma_s$ is maximal. Hence $(I f)(0,y,\sigma_s)=0$. By the gauge correction of lemma~\ref{lma:gauge correction gas giant} we have
    \begin{equation}
    \begin{split}
        I(f)(0,y,\sigma_s)
        &=
        \int_0^{\tau(0,y,\sigma_s)} g^{\mu\nu}(z(t))f_\mu (z(t))\zeta_\mu (t)\der t
        \\
        &=
        \int_0^{\tau(0,y,\sigma_s)}x(t)^2 h^{ij}(z(t))f_i(z(t))\eta_j(t)\der t.
    \end{split}
    \end{equation}
    We Taylor expand $f$ in $x$ at $x=0$ and then do the same for the components of the metric $h$ to get
    \begin{equation}
    \begin{split}
    \label{eqn:Taylored integral}
        I(f)(0,y,\sigma_s)
        &=
        \int_0^{\tau(0,y,\sigma_s)} \left [x(t)^{2}h^{ij}(z(t)) f_i(0,y(t))\eta_j(t)+x(t)^{3} F(x(t),y(t),\eta (t)) \right]\der t
        \\
        &=
        \int_0^{\tau(0,y,\sigma_s)} \left [x(t)^{2}h^{ij}(0,y(t)) f_i(0,y(t))\eta_j(t)+x(t)^{3}P(x(t),y(t),\eta(t)) \right]\der t
    \end{split}
    \end{equation}
    By smoothness of $f$ and $h$ the functions $F$ and $P$ are uniformly bounded in $x, y$ and $\eta$. Taking $s$ small enough ensures that $(y(t),\eta(t))\in U$ by proposition~\ref{prop:short_geodesics} and by lemma~\ref{lma:height estimate} also the values $x(t)$ will stay small. Hence the value of the integral will be determined by the leading term of order $x(t)^2$ and for small enough $s$ we have by the estimate~\eqref{eqn:positivity in boundary determination} that
    \begin{equation}
        \int_0^{\tau(0,y,\sigma_s)} x(t)^{2}h^{ij}(0,y(t)) f_i(0,y(t))\eta_j(t)\der t>0
    \end{equation}
    from which we obtain $(I f)(0,y,\sigma_s)>0$ which is a contradiction.

    For $k\ge 1$ we prove the claim by induction via a contradiction in a similar way as above. Suppose $\partial^k_x f_i(0,y)=0$ holds for all $0\le k<l$ but $\partial^l_x f_i(0,y)\not=0$ for $i=1,\dots ,n-1$. Without loss of generality we can assume that there is $\eta\in T^* _y\partial M$ with $\abs{\eta}_h=1$ so that
    \begin{align}
    \label{eqn:contrary assumption}
        h^{ij}(0,y)\partial_x^l f_i(0,y) \eta_j>0.
    \end{align}
    We then proceed as in the case $k=0$ to show that $(If)(0,y,\sigma_s)>0$. The only difference is that in the step~\eqref{eqn:Taylored integral} where we take the Taylor expansion of $f$ the leading order term is of order $x(t)^{2+l}$ and contains the term~\eqref{eqn:contrary assumption} modulo a positive constant $1/l!$. Hence taking $s$ small enough we still arrive at the contradiction $(I f)(0,y,\sigma_s)>0$ which proves the claim by induction principle.
\end{proof}

We collect everything in this section to prove lemma~\ref{keylemma:bdy-det}

\begin{proof}[Proof of lemma~\ref{keylemma:bdy-det}]
    Let $f$ be a smooth 1-form with $If=0$ on a gas giant $(M,g)$. By lemma~\ref{lma:gauge correction gas giant} and proposition~\ref{prop:boundary determination} there exists $q\in C^\infty (M)$, supported near $\partial M$ with $q|_{\partial M} =0$, so that $f-\der q$ vanishes up to the boundary at polynomial order $k$ for any integer $k>0$. Hence by definition $f-\der q\in x^\infty C^\infty (M,T^* M)$.
\end{proof}

\section{Regularity of the integral function}
\label{sec:Regularity of the integral function}

In this section we prove the lemma~\ref{keylemma:u-regularity}. Throughout the proof we assume that $If=0$ and that we have done the boundary determination and gauge correction so that $f\in x^\infty  C^\infty(M,T^* M)$. To show that $u^f\in \Omega$ we prove the following weaker but more precise claim.

\begin{proposition}
\label{prop:blow-up rates of u}
Let $(M,g)$ be a non-trapping gas giant.
Let $k\in \N$, $f\in x^k C^\infty(M,T^* M)$ and $If=0$. Then $u^f\in x^{k+1} C^\infty (S^* M^\circ)$, $Xu^f=-\lambda f\in  x^k C^\infty (S^* M)$ and
\begin{align}
    \abs{\gradv u^f}_g^2&=\Order(x^{2k-2}),\quad &\abs{\gradh u^f}_g^2&=\Order(x^{2k-2}),\\
    \abs{\gradv X u^f}^2_g&=\Order(x^{2k-4}),\quad &\abs{X\gradv u^f}^2_g&=\Order(x^{2k-4}) 
\end{align}
in the interior $S^* M^\circ$ and $\gradv u^f|_{\partial S^* M}=0$.
\end{proposition}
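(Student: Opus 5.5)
The plan is to exploit the fact that, since $If=0$, $u^f$ can be computed either forward or backward along the geodesic. Near the boundary we always choose the direction in which the geodesic reaches $\partial M$ quickly. Concretely, for $(z,\zeta)\in S^*M^\circ$ with $x$ small we use
\[
u^f(z,\zeta)=\int_0^{\tau(z,\zeta)}\lambda f(\phi_t(z,\zeta))\,\der t=-\int_0^{\tau(z,-\zeta)}\lambda f(\phi_{-t}(z,\zeta))\,\der t .
\]
We take whichever representation has $\xi\le 0$ in its direction of travel. Proposition~\ref{prop:asymptotic of exit time} then gives an exit time $\tau=x+\Order(x^3)$, and along that segment the height is at most $x$, since $x(t)$ is monotone. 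For the integrand we have
\[
\abs{\lambda f}=\abs{f_0\xi+x^2h^{ij}f_i\eta_j}\le \abs{f}_g\abs{\zeta}_g .
\]
Since $f\in x^kC^\infty$, the coefficients satisfy $f_0=\Order(x^k)$ and $x^2h^{ij}f_i\eta_j=\Order(x^{k+1})$, the latter because $x\eta$ is bounded on $S^*M$. Hence $\lambda f=\Order(x^k)$, and integrating over an interval of length about $x$ gives $u^f=\Order(x^{k+1})$. The transport equation $Xu^f=-\lambda f$ follows by differentiating along the flow. Smoothness of $\lambda f=f_0\xi+x^2h^{ij}f_i\eta_j$ up to the boundary of $S^*M$ in the $(x,y,\xi,\eta)$ structure is immediate, as is membership in $x^kC^\infty(S^*M)$. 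Interior smoothness of $u^f$ comes from smoothness of $\tau$ away from tangential exits, which is guaranteed by non-trapping together with the strict convexity of the boundary established in Section~\ref{sec:analysis of exit time}. To handle glancing directions we use the switch between forward and backward integration: $\xi\le0$ or $\xi\ge0$ always holds, so one representation is always available.

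For the gradients we differentiate the integral representation. We write $\gradv u^f$ and $\gradh u^f$ through the variational (Jacobi-field) derivative of $\phi_t$ with respect to $(z,\zeta)$, plus a boundary term from the derivative of $\tau$. The boundary term vanishes, or is controlled, because $\lambda f$ vanishes to order $x^k$ at the exit point. In the coordinate formulas of the appendix, vertical derivatives carry factors of $x$ and $x^{-1}$ from the metric $g^{\mu\nu}=\mathrm{diag}(1,x^2h^{ij})$. The key estimate we aim for is a bound on $\der\phi_t$ in the coordinate frame $\partial_x,\partial_y,\partial_\xi,\partial_\eta$ for $t\le\tau=\Order(x)$, uniform up to the boundary. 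This should follow from Gronwall applied to the linearized Hamilton equations, whose right-hand side is smooth in the chosen structure (Table~\ref{tab:dot-xi}). Combining this with $\partial(\lambda f)=\Order(x^{k-1})$ and an integration interval of length $\Order(x)$ gives coordinate derivatives of $u^f$ of size $\Order(x^{k})$. Converting to $g$-norms of sections of $N$ then costs at most one factor of $x^{-1}$, from the $\eta$-direction weights $\abs{\partial_{\eta}}\sim x^{-1}$, and this yields $\abs{\gradv u^f}^2_g,\abs{\gradh u^f}^2_g=\Order(x^{2k-2})$.

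For $\gradv Xu^f=-\gradv\lambda f$ we compute directly from the appendix formula. A second loss of $x^{-1}$ arises from the $\eta$-component weights and from differentiating $x^2h^{ij}f_i\eta_j$ in $\eta$, which gives $\Order(x^{2k-4})$. For $X\gradv u^f$ we use the commutator $[X,\gradv]=-\gradh$ (the cotangent analogue of the standard identity), so that
\[
X\gradv u^f=\gradv Xu^f-\gradh u^f .
\]
This reduces the bound to ones already obtained. Finally, $\gradv u^f|_{\partial S^*M}=0$ follows from the bound $\abs{\gradv u^f}_g=\Order(x^{k-1})$ whenever $k\ge2$. In the borderline case we use instead the coordinate bound $\Order(x^k)$ together with the vanishing of $u^f$ on $\partial S^*M$, which comes from $If=0$ on $\partial_+$ and trivially on $\partial_-$.

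The main obstacle is making the derivative estimates for $\der\phi_t$ and $\der\tau$ uniform near the glancing set $\partial_0T^*M$, where $\abs{\eta}\sim x^{-1}$ on $S^*M$ and the unit sphere bundle is not a subbundle. There I would rescale $(\xi,\eta)\mapsto(\xi,x\eta)$, as in Proposition~\ref{prop:short_geodesics}, and use Lemmas~\ref{lma:height estimate}--\ref{lma:h-norm bound} to obtain uniform bounds in the rescaled variables.
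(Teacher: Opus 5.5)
\textbf{Verdict: the route is viable and genuinely differs from the paper's for the gradient bounds, but the central flow estimate is misstated as written and needs to be redone in weighted form.}

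\textbf{Where you agree with the paper, and where you differ.} Most of your outline matches the paper: $u^f=\Order(x^{k+1})$ via $u^f(z,\zeta)=-u^f(z,-\zeta)$ and a short exit time, $Xu^f=-\lambda f$, the coordinate bound on $\gradv\lambda f$, the commutator $[X,\gradv]=-\gradh$, and $u^f|_{\partial S^*M}=0$ for the boundary value. The genuine difference is in bounding $\gradv u^f$ and $\gradh u^f$. The paper differentiates under the integral and drops the endpoint term using $\nabla_{S^* M}\tau=-X$ (Proposition~\ref{prop:Sasaki gradient of exit time}). It then bounds the integrand with the quoted Jacobi-field estimates of Lemma~\ref{lma:Jacobi fields}. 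You instead drop the endpoint term because $\lambda f=f_0\xi$ vanishes at $x=0$ when $k\ge1$, and you derive the variational bounds from Hamilton's equations yourself. Your choice is the more robust one. The identity $\nabla_{S^*M}\tau=-X$ cannot hold on an open set, because $X^\flat$ is the contact form $\zeta_\mu\der z^\mu$, which is not closed on $S^*M$. Concretely, for $g=\der x^2+x^{-2}\der y^2$ one has $\tau(x,y,0,1/x)=\pi x/2$, and this changes along the horizontal lift $\partial_x-x^{-1}\eta\partial_\eta$. The price of your route is a uniform linearized-flow estimate near $\partial M$, and that is where the proposal goes wrong.

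\textbf{The flow estimate as stated fails.} Gronwall in the raw frame $\partial_x,\partial_y,\partial_\xi,\partial_\eta$ gives no uniform bound, since $\partial_x\dot\xi=-h(\eta,\eta)+\dots\sim x^{-2}$ on $S^*M$. In the model above, $\partial\xi(t)/\partial x_0=-\abs{\eta}\sin(\abs{\eta}t)\sim x_0^{-1}$ for $t\sim x_0$. Rescaling by the running height does not help either. The derivative $\frac{\der}{\der t}(x\eta)$ contains the term $(\xi/x)(x\eta)$, and $\int\abs{\xi}/x\,\der t=\int\der x/x$ diverges at the exit.

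\textbf{The fix: rescale by the fixed base height $x_0$.} Set $\hat x=x/x_0$, $\hat\eta=x_0\eta$, $s=t/x_0$. The equations become $\hat x'=\xi$, $\xi'=-\hat x\,h^{ij}(x_0\hat x,y)\hat\eta_i\hat\eta_j+\Order(x_0)$, and $y',\hat\eta'=\Order(x_0^2)$.
\begin{itemize}
\item This system is smooth in all variables and in $x_0\in[0,\eps)$.
\item It has bounded $s$-time and transversal exit $\xi=-1$, provided $\xi\le\frac12$ at the base point; your forward/backward switch always allows this.
\item Gronwall then gives the \emph{weighted} estimate $\abs{\delta x(t)}/x_0+\abs{\delta y(t)}+\abs{\delta\xi(t)}+x_0\abs{\delta\eta(t)}\le C$ times its value at $t=0$.
\item It also gives $\tau\le Cx$, which is all you should use. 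The expansion $\tau=x+\Order(x^3)$ from Proposition~\ref{prop:asymptotic of exit time} is not uniform as $\xi\to0$; the model gives $\tau=\pi x/2$ at $\xi=0$.
\end{itemize}

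\textbf{The conversion to $g$-norms must keep direction dependence.} The horizontal lift of a $g$-unit $w\in N$ has $\partial_\xi$-component of size $x^{-1}$ and $\partial_\eta$-component of size $x^{-2}$. These come from $\Gamma^i_{0l}\eta_i$ and from $\Gamma^0_{jl}\sim x^{-3}$. A uniform $\Order(x^k)$ bound on all coordinate derivatives of $u^f$ therefore only yields $\abs{\gradh u^f}_g=\Order(x^{k-2})$. Your claim that the conversion costs ``at most one factor of $x^{-1}$'' is true only for $\gradv$.

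The remedy is to evaluate $\der u^f$ directly on the vertical and horizontal lifts. Use the weighted estimate together with $\abs{\partial_x\lambda f}\lesssim x^{k-1}$, $\abs{\partial_y\lambda f},\abs{\partial_\xi\lambda f}\lesssim x^k$ and $\abs{\partial_\eta\lambda f}\lesssim x^{k+2}$. This gives the stated rates, and in fact better ones: $\abs{\gradv u^f}_g=\Order(x^{k+1})$ and $\abs{\gradh u^f}_g=\Order(x^{k})$.
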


We will first show that lemma~\ref{keylemma:u-regularity} follows directly from proposition~\ref{prop:blow-up rates of u}.

\begin{proof}[Proof of lemma~\ref{keylemma:u-regularity}]
    Since the claim of proposition~\ref{prop:blow-up rates of u} holds for all $k$ then we may assume $If=0$ and $f\in x^\infty C^\infty (M,T^* M)$ on a non-trapping gas giant $(M,g)$ and adapt the results of~\ref{prop:blow-up rates of u} to this setting. Clearly also $u^f\in x^\infty C^\infty(S^* M^\circ)$ and $Xu^f\in C^\infty (S^* M)$. The rest of the quantities containing derivatives in proposition~\ref{prop:blow-up rates of u} are used to compute the corresponding $L^2$ norms of $\gradv u^f,\gradh u^f,\gradv Xu^f$ and $X\gradv u^f$ respectively. Since the estimates hold for all $k$ then we also have $\nabla_{S^* M} u^f\in x^\infty L^2(S^* M,TS^* M)$ and $\gradv Xu^f ,X\gradv u^f\in L^2(N)$. Since the gas giant metric $g$ degenerates these estimates up to the boundary then clearly also similar estimates hold without the $g$-norm. Hence also $\nabla_{S^* M} u^f\in x^\infty L^\infty (S^* M,TS^* M)$. Clearly by definition of the integral function and by proposition~\ref{prop:blow-up rates of u} $u^f|_{\partial S^* M}=\gradv u^f|_{\partial S^* M}=0$. Hence $u^f\in \Omega$ and the proof of lemma~\ref{keylemma:u-regularity} is concluded.
\end{proof}

We continue by proving various results that we will then compile at the end of the section to prove proposition~\ref{prop:blow-up rates of u}. We begin with analyzing the smoothness of the integrand $\lambda f$ of $u^f$ and the smoothness of the exit time of geodesics $\tau(z,\zeta)$. We have the following lemma.

\begin{lemma}
\label{lma:regularity of function}
Let $(M,g)$ be a non-trapping gas giant.
    Let $k\in \N$, $f\in x^k C^\infty(M,T^* M)$ and $If=0$. We identify the 1-form $f$ to a function on $S^* M$ by
    \begin{equation}
    \lambda f(z,\zeta)=g^{\mu\nu }(z)\zeta_\mu f_\nu(z) \teksti{for all} (z,\zeta)\in S^* M.
    \end{equation}
    Then $\lambda f\in C^\infty(S^* M)$ and for all $(z,\zeta)\in S^* M$ near the boundary we have $|\lambda f(z,\zeta)|\le Cx^k$ and $|\der\,  \lambda f(z,\zeta)|_e\le Cx^{k-1}$. 
\end{lemma}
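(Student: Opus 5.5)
Since everything is local, I will work in the collar $[0,1)\times\partial M$ with coordinates $(x,y,\xi,\eta)$ on $T^*M$. The plan is to write $\lambda f$ explicitly as a polynomial in $\zeta$ with coefficients smooth in $z$, and then use the unit-cosphere constraint to bound the momentum components. Using $g^{00}=1$, $g^{0i}=0$ and $g^{ij}=x^2h^{ij}$, we get
\begin{equation}
\lambda f(x,y,\xi,\eta)=f_0(x,y)\,\xi+x^2h^{ij}(x,y)f_i(x,y)\eta_j .
\end{equation}
Every factor here is smooth on $T^*M$ up to $x=0$ in the $(x,y)$ smooth structure, because $h^{ij}$ and $f_\mu$ are smooth there. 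Hence $\lambda f\in C^\infty(T^*M)$. Restricting to the submanifold $S^*M$ preserves smoothness, so $\lambda f\in C^\infty(S^*M)$. Away from the collar, $g$ is a smooth metric and this part is standard.

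For the pointwise bound, I will use the constraint $\xi^2+x^2h^{ij}\eta_i\eta_j=1$ on $S^*M$. Since $h$ is uniformly positive definite near the boundary, this gives $|\xi|\le1$ and $x|\eta|_e\le C$. The momentum $\eta$ itself is unbounded as $x\to0$, so this is the key point: every occurrence of $\eta$ must be paired with a factor of $x$. Writing $f_\mu=x^k\tilde f_\mu$ with $\tilde f_\mu$ smooth, the two terms satisfy
\begin{equation}
|f_0\xi|\le Cx^k,\qquad |x^2h^{ij}f_i\eta_j|\le Cx^{k+1}\,(x|\eta|_e)\le Cx^{k+1},
\end{equation}
so $|\lambda f|\le Cx^k$.

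For the derivative bound, I will differentiate in the coordinates $(x,y,\xi,\eta)$ and measure the result with the Euclidean norm $|\cdot|_e$ in these coordinates. The natural approach is to compute the differential on $T^*M$ and restrict it to $TS^*M$; the restricted covector has Euclidean norm at most that of the ambient one. The pieces are bounded as follows.
\begin{itemize}
\item $\partial_\xi\lambda f=f_0=\Order(x^k)$.
\item $\partial_{\eta_j}\lambda f=x^2h^{ij}f_i=\Order(x^{k+2})$.
\item $\partial_{y}\lambda f=\partial_yf_0\,\xi+x^2\partial_y(h^{ij}f_i)\eta_j=\Order(x^k)+\Order(x^{k+1})$, using $x|\eta|\le C$ again.
\item $\partial_x\lambda f=\partial_xf_0\,\xi+\partial_x(x^2h^{ij}f_i)\eta_j$. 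Here $\partial_xf_0=\Order(x^{k-1})$ because $f_0=x^k\tilde f_0$. Also $\partial_x(x^{k+2}h^{ij}\tilde f_i)=\Order(x^{k+1})$, and multiplying by $|\eta|\le C/x$ gives $\Order(x^k)$.
\end{itemize}
The worst term is therefore $\Order(x^{k-1})$, which gives $|\der\,\lambda f|_e\le Cx^{k-1}$. When $k=0$ the stated bound is $x^{-1}$, which still holds trivially since the derivative is then actually bounded. If the gauge correction has already been applied, so that $f_0\equiv0$, the bound improves further, but that is not needed.

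The main obstacle is the bookkeeping of the unbounded momentum $\eta$ on the non-compact fibres of $S^*M$ near the boundary. Every $\eta$ factor must be absorbed by a power of $x$ through the constraint $x|\eta|\le C$, and the estimates must be checked to be uniform in the fibre, not only pointwise. I would also double-check that $S^*M$ is a smooth embedded submanifold up to the boundary, so that smoothness of the restriction follows. This holds because $\der H=\xi\,\der\xi+\dots$ and $\partial_\eta H=x^2h\eta$ cannot both vanish on $\{H=1/2\}$: if $\xi=0$ then $x^2h(\eta,\eta)=1$, which forces $x\eta\neq0$.
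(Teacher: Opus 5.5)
Your proposal is correct and follows the paper's proof: you write $\lambda f=f_0\xi+x^2h^{ij}f_i\eta_j$ in boundary normal coordinates, read off smoothness, and bound the partial derivatives term by term, with $\partial_x f_0\,\xi=\Order(x^{k-1})$ as the leading term. Your explicit use of the cosphere constraint $x|\eta|_e\le C$ to control the unbounded fibre variable $\eta$ makes rigorous a step the paper leaves implicit when it asserts that the constant $C$ is independent of $\eta$.
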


\begin{proof}
    It is clear from the definition of $\lambda f$ that the function is smooth in the interior. Hence it remains to show that the function is smooth in any neighborhood that contains the structure of boundary normal coordinates. In the coordinates $(x,y,\xi,\eta)$ we have
    \begin{equation}
        \lambda f(z,\zeta)=\lambda f(x,y,\xi,\eta)=f_0(x,y)\xi+x^2 h^{ij}(x,y)f_i(x,y)\eta_j
    \end{equation}
By definition the expression is smooth w.r.t.\ all the variables $x,y,\xi$ and $\eta$. The leading order vanishing rate to the boundary is given by the term $|f_0\xi|\le Cx^k$ where $C$ is independent of $x,y,\xi$ and $\eta$. To prove the claim $|\der\,  \lambda f(z,\zeta)|_e\le Cx^{k-1}$ we compute the partial derivatives of $\lambda f$:
\begin{align}
    \partial_x\lambda f(z,\zeta)&=\partial_x f_0\, \xi+2xh^{ij}f_i\eta_j+x^2\partial_x h^{ij} f_i\eta_j+x^2h^{ij}\partial_x f_i\, \eta_j,\\
    \partial_{y^k}\lambda f(z,\zeta)&=\partial_{y^k} f_0\, \xi +x^2\partial_{y^k}h^{ij} f_i\eta_j+x^2h^{ij}\partial_{y^k}f_i\, \eta_j,\\
    \partial_\xi \lambda f(z,\zeta)&=f_0,\\
    \partial_{\eta_k}\lambda f(z,\zeta)&=x^2h^{ik}f_i
\end{align}
from which we see that if we write $f_0=x^k\bar{f}_0$ for $k\ge 1$, where $\bar{f}_0\in C^\infty (M)$ is independent of $x$, then the leading order behavior is given by $\partial_x f_0 \xi =kx^{k-1} \bar{f}_0\xi +\Order(x^k)$.
\end{proof}

\begin{corollary}
\label{cor:regularity of lambda f}Let $(M,g)$ be a non-trapping gas giant.
    Let $\lambda f$ be as in lemma~\ref{lma:regularity of function}. Then $X u^f\in x^kC^\infty (S^* M)$ and
    \begin{equation}
    \abs{\gradv X u^f}^2_g=\abs{\gradv \lambda f}_g^2=\Order(x^{2k-4}).
    \end{equation}
\end{corollary}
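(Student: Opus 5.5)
The first claim is immediate. By the transport equation $Xu^f=-\lambda f$, together with lemma~\ref{lma:regularity of function}, we get $Xu^f\in C^\infty(S^*M)$ with $|Xu^f|\le Cx^k$. The coordinate expression $\lambda f=f_0\xi+x^2h^{ij}f_i\eta_j$, with $f_0,f_i\in x^kC^\infty(M)$, shows that $\lambda f$ is $x^k$ times a function smooth up to the boundary on $S^*M$. Hence $Xu^f\in x^kC^\infty(S^*M)$.

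For the vertical gradient I will use the local coordinate formula~\eqref{eqn:vertical gradient local} from the appendix. For a function $w$ on $S^*M$, $\gradv w$ is essentially the projection of $g^{-1}$-type contractions of $\partial_{\zeta}w$ onto $N$. In the worst case one has $(\gradv w)^\sharp$-components $g_{\mu\nu}$-weighted by $\partial_{\zeta_\nu}w$, up to a projection removing the $\zeta$-component. The norm is then
\begin{equation}
\abs{\gradv w}_g^2\lesssim g^{\mu\nu}\,\partial_{\zeta_\mu}w\,\partial_{\zeta_\nu}w
=(\partial_\xi w)^2+x^2h^{ij}\partial_{\eta_i}w\,\partial_{\eta_j}w ,
\end{equation}
or the analogous expression with $g_{\mu\nu}$. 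The latter carries the weight $x^{-2}h_{ij}$ and is the source of the loss of powers of $x$. I will apply this to $w=\lambda f$, using the derivatives computed in the proof of lemma~\ref{lma:regularity of function}: $\partial_\xi\lambda f=f_0=\Order(x^k)$ and $\partial_{\eta_k}\lambda f=x^2h^{ik}f_i=\Order(x^{k+2})$.

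In either convention, the key point is to track how the vertical derivatives on $S^*M$ are expressed. Because $S^*M$ is cut out by $\xi^2+x^2h^{ij}\eta_i\eta_j=1$, the $\eta$-fibre directions have Sasaki length of order $x^{-1}$, possibly $x^{-2}$ after the projection. I expect the most pessimistic weight to be $x^{-4}$ in the squared norm, coming from $g_{ij}g_{kl}$-type terms or from differentiating the constraint. Combined with $|\der\lambda f|_e\le Cx^{k}$ in the fibre directions, this gives $\abs{\gradv\lambda f}_g^2=\Order(x^{2k-4})$.

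The main obstacle is being honest about the exact form of $\gradv$ on the (non-subbundle) unit cosphere bundle near $x=0$, and hence about which power of $x$ the metric contributes. I will write $\gradv w=\big(g^{\mu\nu}\partial_{\zeta_\nu}w-(\zeta^\sharp)^\mu\,\zeta_\nu g^{\nu\rho}\partial_{\zeta_\rho}w\big)$, or its covector version, and bound each component using $|\xi|\le1$ and $|\eta|\le Cx^{-1}$ on $S^*M$. The factor $|\eta|\lesssim x^{-1}$ appears in the projection term, and squared with the musical weights it produces at most $x^{-4}$ relative to $(x^k)^2$. This yields the stated $\Order(x^{2k-4})$, uniformly near the boundary. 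In the interior the bound is trivial by smoothness and compactness away from $\partial M$.
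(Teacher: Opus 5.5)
Your argument for $Xu^f\in x^kC^\infty(S^*M)$ is the paper's. For the vertical gradient you take the same route as the paper: the appendix formula~\eqref{eqn:vertical gradient local} plus the fibre derivatives of $\lambda f$ from Lemma~\ref{lma:regularity of function}. As written, however, the second estimate is not proved. The loss $x^{-4}$ is only ``expected'' as a ``most pessimistic weight'' and is never derived.

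There are further problems in that paragraph. The formula you finally commit to, $g^{\mu\nu}\partial_{\zeta_\nu}w-(\zeta^\sharp)^\mu\zeta_\nu g^{\nu\rho}\partial_{\zeta_\rho}w$, is not the vertical gradient. The component $\partial_{\zeta_\nu}w$ already carries an upper index, so both contractions with $g^{\mu\nu}$ pair two upper indices. Also, the $\eta$-directions are short, not long. The vertical part of the Sasaki metric~\eqref{eqn:Sasaki metric} is $g^{\mu\nu}\Zeta_\mu\tilde{\Zeta}_\nu$, so $\abs{\partial_{\eta_i}}^2=x^2h^{ii}$. What has size $x^{-1}$ is the coordinate extent $\abs{\eta}$ of the fibre, not a Sasaki length. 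So the step you yourself flagged as the main obstacle is exactly where the argument stops.

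The missing step is a one-line computation, and it gives more than needed. By the appendix, $\gradv w=\partial_{\zeta_\mu}(w\circ p)\,\partial_{z^\mu}$. Hence $\abs{\gradv w}_g^2=g_{\mu\nu}\partial_{\zeta_\mu}(w\circ p)\,\partial_{\zeta_\nu}(w\circ p)$, and the only singular weight is $g_{ij}=x^{-2}h_{ij}$: a loss of $x^{-2}$, not $x^{-4}$.

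Now take $w=\lambda f$, which is linear in $\zeta$. The degree-zero extension gives $\gradv\lambda f=f^\sharp-(\lambda f)\zeta^\sharp$. The projection coefficient $\zeta_\rho\partial_{\zeta_\rho}\lambda f$, where your factor $\abs{\eta}\lesssim x^{-1}$ enters, is by Euler's identity just $\lambda f=\Order(x^k)$, and $\abs{\zeta^\sharp}_g=1$. Hence
\begin{equation*}
\abs{\gradv\lambda f}_g^2=\abs{f}_g^2-(\lambda f)^2\le f_0^2+x^2h^{ij}f_if_j=\Order(x^{2k}).
\end{equation*}

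The paper instead uses the crude bound $\abs{\gradv\lambda f}_g^2\le Cx^{-2}\abs{\der\lambda f}_e^2$. It combines this with the full differential estimate $\abs{\der\lambda f}_e\le Cx^{k-1}$, which includes the $\partial_x$-derivative that plays no role here, and this lands exactly on $x^{2k-4}$. Even your own observation that the fibre derivatives are $\Order(x^k)$, with the correct weight $x^{-2}$, would already give $\Order(x^{2k-2})$.
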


\begin{proof}
    Since the geodesic flow on the cotangent bundle is smooth and $f$ vanishes up to the boundary then by fundamental theorem of calculus $Xu^f=-\lambda f$ and by lemma~\ref{lma:regularity of function} $-\lambda f\in x^kC^\infty (S^* M)$. To prove the second claim we restrict to a neighborhood containing boundary normal coordinates and use the local coordinate formula of the vertical gradient given by equation~\eqref{eqn:vertical gradient local} found in the appendix to get
    \begin{equation}
        \abs{\gradv \lambda f}_g^2=g_{\mu\nu}\partial_{\zeta_\mu}\lambda f\, \partial_{\zeta_\nu} \lambda f\le C x^{-2}\abs{\der\, \lambda f}_e^2\le Cx^{2(k-1)-2}=Cx^{2k-4}.
    \end{equation}
\end{proof}

In the following proofs we need some estimates for the normal Jacobi fields for which we will state the following lemma.

\begin{lemma}[{\cite[Lemma 26]{dHIKM2024}}]
    \label{lma:Jacobi fields}
    Let $(M,g)$ be a gas giant.
    Let $J(t)$ be a Jacobi field everywhere normal to a bicharacteristic curve $(z(t),\zeta(t))$ with $x(0)<\varepsilon$ and $\xi(0)\le 0$. Then $\abs{J(t)}_{\overline{g}}\le C$ and $\abs{\Der_t J(t)}_{\overline{g}}\le C x(t)^{-1}$ for all $t\in [0,\tau (z(0),\zeta(0))]$.
\end{lemma}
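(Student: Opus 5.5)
This lemma is quoted from \cite[Lemma 26]{dHIKM2024}, so the cleanest route is to check that its hypotheses and conclusion transfer to the present smooth structure. The plan is to rederive the estimates directly from the Jacobi equation written in the $g$-normal coordinates $(x,y)$.

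\textbf{Setting up the Jacobi equation.} Write a normal Jacobi field as $J=J^0\partial_x+J^i\partial_{y^i}$. Note that $\abs{J}_{\overline g}^2\simeq x\bigl((J^0)^2+x^{-2}\abs{J^\top}_h^2\bigr)$, since $\overline g=\rho g$ and $\rho\sim x^2$. Using the Christoffel symbols listed in section~\ref{sec:preliminaries}, write $\Der_t^2J+R(J,\dot\gamma)\dot\gamma=0$ as a first-order linear ODE system. Its unknowns are $J$ and $\Der_tJ$, expressed in a frame adapted to $\overline g$, for instance $\sqrt{x}\,\partial_x$ and $x^{-1/2}\cdot x\,e_i$ with $e_i$ $h$-orthonormal. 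In this frame the norms in the claim are Euclidean norms of the coefficient vectors.

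\textbf{Reparametrizing by $x$.} By proposition~\ref{prop:asymptotic of exit time}, the assumption $\xi(0)\le0$, $x(0)<\varepsilon$ gives $\xi<0$ along the curve, so $x$ is monotone and $\der t=-\der x/\xi\approx \der x$. The coefficient matrix of the system then has entries of size $\Order(x^{-1})$ (from $\Gamma^i_{j0}$, $\Gamma^0_{ij}$ weighted by $\dot y\sim x^2\eta$) and curvature entries of size $\Order(x^{-2})$ by \cite[proposition 1]{dHIKM2024}. The key point is that the $\Order(x^{-1})$ part has an explicit Euler-type structure. Its leading part is $\partial_x$ plus $x^{-1}$ times a constant matrix, which becomes a constant-coefficient system in the variable $\log x$. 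Its eigenvalues, computed from $R\sim -x^{-2}$, determine the admissible growth.

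\textbf{Gronwall estimate.} Conjugate by the diagonal weights $D=\mathrm{diag}(1,x)$ acting on $(J,\Der_tJ)$. This is exactly the scaling in the claim: $J$ is bounded and $\Der_tJ$ is $\Order(x^{-1})$. After this conjugation the system should become $\frac{\der}{\der x}W=(x^{-1}A_0+B(x))W$ with $B$ bounded and with $A_0$ having no eigenvalues of negative real part in the relevant direction. An energy or Gronwall argument over the interval $[0,x(0)]$ then bounds $W$ uniformly in terms of its data at $x(0)$, which is controlled by the normalisation of $J$ there. This gives $\abs{J}_{\overline g}\le C$ and $\abs{\Der_tJ}_{\overline g}\le Cx^{-1}$.

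\textbf{Main obstacle.} The hard step is the spectral sign of $A_0$. The boundary curvature blows up like $x^{-2}$, which is borderline. Without the exact leading structure, a naive Gronwall bound only gives power-law growth $x^{-c}$. I would first compute the model case $\partial_xh=0$, where lemma~\ref{lma:constant of motion} makes $h(\eta,\eta)$ conserved, and solve the Euler system explicitly. I would then treat the general $h$ as an $\Order(1)$ perturbation $B$.
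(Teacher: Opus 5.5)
The paper gives no proof of this lemma. It quotes \cite[Lemma 26]{dHIKM2024} and asserts that the argument carries over to the $(x,y)$ structure, which is essentially your first sentence. Your direct rederivation is a genuinely different, self-contained route. As written, however, it breaks exactly at the step you call the main obstacle, and it is not uniform over the stated hypotheses.

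\textbf{The weights decide the spectral sign, and yours are wrong.} With $\rho\sim x^2$ one has $\abs{J}_{\overline g}^2\simeq x^2(J^0)^2+\abs{J^\top}_h^2$, not $x\bigl((J^0)^2+x^{-2}\abs{J^\top}_h^2\bigr)$. So a $\overline g$-orthonormal frame is $x^{-1}\partial_x$, $e_i$, not $\sqrt{x}\,\partial_x$, $\sqrt{x}\,e_i$. This matters because the lemma is sharp.

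Since $\overline g=\rho g$ is conformal, it is cleanest to use a $g$-parallel orthonormal frame along $\gamma$, so no Christoffel symbols appear. The Jacobi equation becomes $\ddot{\mathbf j}+\mathcal R(t)\mathbf j=0$, and the claim becomes $\abs{\mathbf j}\le Cx^{-1}$, $\abs{\dot{\mathbf j}}\le Cx^{-2}$. Along $x(t)=x(0)-t$ one has $\mathcal R=-2x^{-2}I+\Order(x^{-1})$, so $\mathbf j$ behaves like $x^2$ or $x^{-1}$. Thus $\abs{J}_{\overline g}\sim x\abs{\mathbf j}$ stays bounded only because the root $-1$ is exactly cancelled by $\rho^{1/2}\sim x$.

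In the variables $W=(x\mathbf j,x^2\partial_x\mathbf j)$ the Euler matrix is $A_0=\begin{pmatrix}I&I\\2I&2I\end{pmatrix}$, with semisimple eigenvalues $0$ and $3$. That is exactly what a variation-of-constants argument toward $x=0$ needs. With your weights, the $x^{-1}$ mode appears as an $x^{-1/2}$ blow-up of the coefficients, i.e.\ an eigenvalue $-1/2$, and the conclusion fails. For the same reason, knowing only ``$R\sim x^{-2}$'' is not enough: you need the exact leading constant. With $\mathcal R=-\kappa x^{-2}I$ and $\kappa>2$, $\abs{J}_{\overline g}$ would be unbounded.

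\textbf{Uniformity.} The hypotheses allow $\xi(0)=0$, i.e.\ $x(0)^2h(\eta,\eta)=1$. Then $-1/\xi$ is singular at $x=x(0)$, and $\dot\gamma$ starts tangent to the level set. So in the variable $x$ the system is not of the form $x^{-1}A_0+B$ with $B$ bounded. Proposition~\ref{prop:asymptotic of exit time} cannot supply $\der t\approx\der x$ uniformly: for $h$ independent of $(x,y)$ and $\xi(0)=0$ one gets $x(t)=x(0)\cos(t/x(0))$ and $\tau=\pi x(0)/2$.

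You need to split the curve:
\begin{itemize}
\item On $x\in[x(0)/2,x(0)]$, work in the rescaled time $t/x(0)$. There the coefficients are $\Order(1)$ on an interval of length $\Order(1)$, so the amplification is bounded.
\item On $[0,x(0)/2]$ one has $\abs{\xi}\ge c$. The deviation from the Euler system (coming from $-1/\xi\neq1$, $\dot\xi\neq0$ and the tilt of $\dot\gamma$) is $\Order(x(0)^{-1})$, and its integral over this range is bounded.
\end{itemize}

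Finally, by linearity the constant in the statement cannot be absolute. What this argument gives is the bound $C\bigl(\abs{J(0)}_{\overline g}+x(0)\abs{\Der_tJ(0)}_{\overline g}\bigr)$ with $C$ uniform, and that is the normalisation your conjugation produces at $x=x(0)$.
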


The proof of the lemma can be found in~\cite{dHIKM2024} in a different smooth structure but it is easy to check that the proof and hence also the result will be the same in the boundary normal coordinate smooth structure that we use. Next we prove the following proposition for the exit time of the geodesic flow:

\begin{proposition}
    \label{prop:Sasaki gradient of exit time}
    Let $(M,g)$ be a non-trapping gas giant.
    The Sasaki gradient of the exit time is given by $\nabla_{S^* M}\tau =-X$. In particular $\tau\in C^\infty(S^* M)$.
\end{proposition}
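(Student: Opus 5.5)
The plan is to compute $\der\tau$ directly, using the Sasaki splitting $TS^*M=\R X\oplus\Hor\oplus\Ver$, and to show that $\der\tau(\theta)=0$ for every $\theta\in\Hor\oplus\Ver$ while $X\tau=-1$. Orthogonality of the splitting then gives $\nabla_{S^*M}\tau=(X\tau)X+\gradv\tau+\gradh\tau=-X$.

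\emph{The flow direction.} Non-trapping gives $\tau(\phi_t(z,\zeta))=\tau(z,\zeta)-t$ for $t\in[0,\tau(z,\zeta)]$. Differentiating at $t=0$ yields $X\tau=-1$ on $S^*M^\circ$, and by continuity up to $\partial_+S^*M$.

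\emph{The normal directions.} Fix $(z,\zeta)\in S^*M^\circ$ and $\theta\in\Hor\oplus\Ver$, and take a curve $c(r)$ in $S^*M$ with $c(0)=(z,\zeta)$ and $\dot c(0)=\theta$. The exit times $\tau(c(r))$ are defined implicitly by
\begin{equation}
x\big(\phi_{\tau(c(r))}(c(r))\big)=0 .
\end{equation}
Differentiating in $r$ at $r=0$ gives
\begin{equation}
\xi(\tau)\,\der\tau(\theta)+\der x\big(J(\tau)\big)=0,
\end{equation}
where $J(t)=\der\pi\,\der\phi_t(\theta)$ is the Jacobi field along $\gamma$ with $J(0)=\der\pi(\theta)$ and $\Der_tJ(0)=K(\theta)$. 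Since $\theta\in\Hor\oplus\Ver$, both initial data lie in $N$, that is, they are $g$-orthogonal to $\zeta^\sharp$. Hence $J$ stays normal to $\dot\gamma$ for all $t$.

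The key geometric input comes from the conservation law derived in section~\ref{sec:analysis of exit time}: $\xi^2+x^2h^{ij}\eta_i\eta_j=1$. Evaluated at $x=0$, this shows that every unit-speed geodesic exits with $\xi(\tau)=-1$. The term $x^2h^{ij}\eta_i\eta_j$ vanishes there, and $\eta$ stays bounded by Lemma~\ref{lma:h-norm bound} and the constancy of $h_0(\eta,\eta)$. So the exit covector is exactly $-\der x$, and $\dot\gamma(\tau)=-\partial_x$: geodesics hit the boundary orthogonally.

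Now $J(t)\perp_g\dot\gamma(t)$, and near the boundary $g=\der x^2+x^{-2}h$, so the $\partial_x$-component of $J(t)$ equals $-\xi(t)^{-1}g(J,\dot\gamma)$ minus a correction of the form $\xi^{-1}x^2h^{ij}\eta_j\cdot(\text{tangential part of }J)_i$ with the $g$-lowered index. I would write this explicitly as
\begin{equation}
\der x(J)=-\xi^{-1}x^2h^{ij}\eta_j\,x^{-2}h_{ik}J^k=-\xi^{-1}\eta_kJ^k .
\end{equation}
Lemma~\ref{lma:Jacobi fields} gives $|J(t)|_{\overline g}\le C$, so the tangential components $J^k$ are bounded, but $\eta_k$ need not vanish at the exit. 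This is therefore the delicate step. What has to be shown is that $\eta_kJ^k\to0$ as $t\to\tau$.

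I would get this from the conserved quantity $g(J,\dot\gamma)=\xi J^0+\eta_kJ^k=0$ together with the bound on $\Der_tJ$. Writing $J^0=\der x(J)$, the quantity $\der x(J)$ satisfies an ODE along $\gamma$ driven by $\Der_tJ$. Using $|\Der_tJ|_{\overline g}\le Cx^{-1}$ from Lemma~\ref{lma:Jacobi fields}, the asymptotics $\tau=x_0+\Order(x_0^3)$ of Proposition~\ref{prop:asymptotic of exit time}, and $\der x=\xi\,\der t$, I would show that $J^0(t)$ has a limit and that this limit equals zero at $t=\tau$. The mechanism is that $J^0$ is forced to behave like $x\cdot(\text{bounded})$ near the exit, because the $x^{-3}h$ term in $\Gamma^0_{ij}$ pushes normal Jacobi fields tangent to the level sets. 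I expect this vanishing of $\der x(J(\tau))$ to be the main obstacle. If the ODE estimate proves awkward, my first fallback is to pass to the unit-speed reparametrisation used before Lemma~\ref{lma:height estimate} and compare with the model metric $\der x^2+x^{-2}h_0$, where the Jacobi equation can be integrated explicitly.

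Granting $\der x(J(\tau))=0$ and $\xi(\tau)=-1\neq0$, the differentiated exit identity gives $\der\tau(\theta)=0$ for all $\theta\in\Hor\oplus\Ver$, and hence $\nabla_{S^*M}\tau=-X$. For smoothness, $\tau$ is a root of $x(\phi_t(z,\zeta))=0$, and the $t$-derivative of this function at the root is $\xi(\tau)=-1\neq0$. The Hamiltonian flow is smooth up to the boundary in the $(x,y)$ structure, as shown in Table~\ref{tab:dot-xi}. The implicit function theorem therefore gives $\tau\in C^\infty(S^*M)$, with smoothness at $\partial_-S^*M$ coming from extending the flow slightly past $x=0$.
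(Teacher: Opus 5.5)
Your route is the same as the paper's: differentiate $x(\phi_{\tau(c(r))}(c(r)))=0$, use the transversal exit $\xi(\tau)=-1$, and reduce $\der\tau(\theta)=0$ to $\der x(J(\tau))=0$. The paper settles that last step in one line ("$J_\theta$ is normal to $\dot\gamma_f=(-1,0,\dots,0)$, hence $J^0_\theta(\tau)=0$"). You rightly see that this does not follow. The $x^{-2}$ in $g$ cancels the $x^2$ in $\dot\gamma^k$, so $g(J,\dot\gamma)=\xi J^0+\eta_kJ^k$, and normality only gives $J^0(\tau)=\eta_k(\tau)J^k(\tau)$.

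\textbf{The gap.} You leave $\eta_kJ^k\to0$ unproved, and it cannot be proved: it is false, and so is $\nabla_{S^*M}\tau=-X$. Suppose $\gradv\tau$ vanished on $S^*M^\circ$. Then $\tau$ would be constant on each fibre $S^*_zM\cong S^{n-1}$, which is connected since $n\ge2$. So $\tau=\pi^*p$, and $X\tau=\lambda\,\der p$ would be odd in $\zeta$. This contradicts $X\tau\equiv-1$, which you did prove correctly. Equivalently, $\Hor\oplus\Ver$ is the kernel of the contact form $\alpha=\zeta_\mu\der z^\mu|_{S^*M}$, and $\der\tau=-\alpha$ would force $\der\alpha=0$.

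\textbf{A concrete counterexample to the key step.} Take the local model $g=\der x^2+x^{-2}\der y^2$, so $h=\der y^2$.
\begin{itemize}
\item The momentum $\eta$ is conserved, and $x(t)=\eta^{-1}\sin(\eta t+\psi)$ with $\xi_0=\cos\psi$ and $x_0\eta=\sin\psi$.
\item Hence $\tau=x_0(\pi-\psi)/\sin\psi$. It decreases from $\pi x_0/2$ to $x_0$ as the covector turns from tangential ($\psi=\pi/2$) to straight down ($\psi\to\pi$).
\item For the fibre variation at $\psi=\pi/2$, the Jacobi field is $J^0(t)=-x_0\sin(t/x_0)$, so $J^0(\tau)=-x_0\neq0$, which matches $\partial_\psi\tau=-x_0$.
\end{itemize}
So the mechanism you hoped for, with the $x^{-3}h$ term in $\Gamma^0_{ij}$ forcing $J^0=\Order(x)$, does not occur. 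The same example shows that the expansion $\tau=x_0+\Order(x_0^3)$ you planned to invoke fails for directions where $x_0^2h^{ij}\eta_i\eta_j$ is of order one. In the model, only $\tau\le\pi x_0/2$ holds.

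\textbf{What survives.} Two parts of your plan stand: $X\tau=-1$, and the implicit-function argument for smoothness of $\tau$ (transversal exit plus the smoothly extended Hamiltonian flow). The smoothness argument does not rely on the false gradient identity, unlike the paper's deduction of smoothness from it. For the later differentiation of $u^f$, what is actually needed is that the endpoint term $\lambda f(\phi_{\tau}(z,\zeta))\,\der\tau(\theta)$ drops out. That follows from $\lambda f=f_0\xi+x^2h^{ij}f_i\eta_j$ vanishing on $\partial S^*M$ when $f\in x^kC^\infty$ with $k\ge1$, not from $\der\tau(\theta)=0$.
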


\begin{proof}
    We begin by proving the claim in the interior points $S^* M^\circ$ and then using a smooth extension of the manifold $M$ w.r.t.\ symplectic structure to prove the claim in $\partial T^* M\setminus \partial_0 T^* M$ and hence also in $\partial S^* M\subset \partial T^* M\setminus \partial_0 T^* M$ (inclusion follows from the gnomonic projection discussion in section~\ref{sec:Boundary determination}). 
    
    First we show that $X\tau(z,\zeta)=-1$ for all $(z,\zeta)\in S^* M^\circ$. Since the geodesic flow on the cotangent bundle is smooth then
    \begin{align}
    \label{eqn:X derivative of exit time}
        X\tau(z,\zeta)=\frac{\der }{\der s}\tau(\phi_s(z,\zeta))|_{s=0}=\frac{\der }{\der s}(\tau(z,\zeta)-s)|_{s=0}=-1.
    \end{align}

    Next we show that $\gradv\tau (z,\zeta) =\gradh \tau (z,\zeta)=0$ for all $(z,\zeta)\in S^* M^\circ$. To this end let $s\in (-1,1)$ and $c(s)$ be a curve in $S^* M^\circ$ with $c(0)=(z,\zeta)$, $\dot{c}(0)=\theta\in T_{z,\zeta} S^* M$ and $\theta\perp X$. We use the implicit function theorem on the function $F\colon \R \times \R \to \R $, $F(s,t)=x(\pi(\phi_t(c(s)))$ at the point $(s,\tau(c(s)))$ and value $s=0$. Here we assume that the scaling $s$ in $c(s)$ is chosen so that $\phi_{\tau(c(s))}(c(s))$ is within a single patch of boundary normal coordinates for all $s\in (-1,1)$ so that $F$ is well-defined. Observe that $F(0,\tau(z,\zeta))=0$ and
    \begin{equation}
        \begin{split}
        \partial_t F(0,t)|_{t=\tau (c(0))}&=\partial_t F(0,t) |_{t=\tau (z,\zeta)}\\
        &=\der x(\der \pi (\dot{\phi}_{\tau(z,\zeta)}(c(0))))\\
        &=\der x(\der \pi (\dot{\phi}_{\tau(z,\zeta)}(z,\zeta)))
        \end{split}
    \end{equation}
    where $\dot{\phi}_{\tau(z,\zeta)}(z,\zeta)=(\dot{\gamma}_{f},\dot{\zeta}_{f})$. Hence
    \begin{equation}
        \partial_t F(0,t)|_{t=\tau(z,\zeta)}=\der x( \der \pi (\dot{\gamma}_{f},\dot{\zeta}_{f}))=\der x (\dot{\gamma}_{f})=\dot{\gamma}_{f}^0.
    \end{equation}
    By Hamilton's equations and the unit speed constraint of geodesics we know that $\dot{\gamma}_{f}^0= -1$ so $\partial_t F(0,t)|_{t=\tau(z,\zeta)}$ is non-zero and hence we may apply the implicit function theorem. Then
    \begin{equation}
        \frac{\der F(s,\tau(c(s)))}{\der s}=\partial_1 F(s,\tau (c(s)))+\partial_2 F(s,\tau (c(s))) \frac{\der \tau (c(s))}{\der s}=0
    \end{equation}
    so that
    \begin{equation}
    \frac{\der \tau (c(s))}{\der s}|_{s=0}=-\frac{\partial_1 F(s,\tau(c(s)))}{\partial_2 F(s,\tau(c(s)))}\Bigg|_{s=0}.
    \end{equation}
    Next observe that $\pi_{T^* M}(\phi_t(z,\zeta))=\pi_{TM}(\gamma(t),\dot{\gamma}(t))=\gamma(t)$ so $\der \pi (\der \phi_t(\theta))=J_\theta(t)$, where, by $\theta\perp X$, $J_\theta$ is the normal Jacobi field along the geodesic $\gamma$ with the initial condition $J_\theta(0)=\der \pi(\theta)$. Then by lemma~\ref{lma:Jacobi fields} we know that the values of the components of $J_\theta$ are bounded so that
    \begin{equation}
        \partial_s F(s,t)|_{s=0}=\der x(\der \pi(\der \phi_t(\partial_s c(s)))) |_{s=0}=\der x (J_\theta(t))
    \end{equation}
    and
    \begin{equation}
        \partial_s F(s,\tau(z,\zeta))|_{s=0}=\der x (J_\theta(\tau(z,\zeta)))=J_\theta^0(\tau(z,\zeta))=0
    \end{equation}
    where the zero value follows from the fact that $J_\theta$ is a normal Jacobi field so in particular it has to be normal to $\dot{\gamma}_f=(-1,0,\dots ,0)$ at $\tau(z,\zeta)$.
    Hence we get 
    \begin{equation}
    \frac{\der \tau (c(s))}{\der s}|_{s=0}=0\teksti{in} S^*M^\circ
    \end{equation}
    which proves $\gradh\tau(z,\zeta)=\gradv\tau(z,\zeta)=0$ for all $(z,\zeta)\in S^* M^\circ$

    To prove the claim in $S^* M$ we will prove it on $\partial T^* M\setminus \partial_0 T^* M$ via the same implicit function theorem proof as above and note that by the boundary determination we may identify $\partial T^* M\setminus \partial_0 T^* M$ to $\partial S^* M$. Since we are taking variations on the boundary $\partial M$ it will be useful to extend the manifold. First we consider the Riemannian manifold $(M,\overline{g})$ with a smooth metric $\overline{g}$. By~\cite[lemma 3.1.8]{PSU2023} there is a closed smooth extension $(N,\overline{g})$ of $(M,\overline{g})$, see also~\cite[Example 9.32]{Lee2013}. Since $(N,\overline{g})$ is not expected to carry any useful information about the singular gas giant metric $g$, we drop the Riemannian structure from here on and consider the smooth manifold $M$ and its closed smooth extension $N$ (not to be confused with the notation of the normal bundle). We will extend various smooth functions from $M$ to $N$ by the extension theorem of Whitney~\cite{Whi1936}.
    
    We start by proving that there exists a smooth extension of the geodesic flow of a gas giant from $M$ to $N$. Since $N$ is a smooth extension of $M$ then the cotangent bundle $T^* N$ is a smooth extension of $T^* M$. Hence also the canonical symplectic form $\omega =\sum_{\mu =0}^{n-1}\der z^\mu \wedge \der \zeta_\mu $ extends smoothly from $T^* M$ to $T^* N$. By Whitney extension theorem we may extend the smooth Hamiltonian $H\colon T^*M\to \R $, $H(z,\zeta)=\frac{1}{2}g^{\mu \nu }(z)\zeta_\mu\zeta_\nu $ as a smooth function in $T^* N$. On the level of symplectic geometry the corresponding Hamiltonian flow is given by the equation $\der H=i_X\omega$ and since both $H$ and $\omega$ have smooth extensions in $T^* N$ then so does the Hamiltonian vector field of geodesic flow $X$. Hence also the geodesic flow has a smooth extension $\phi_t\colon T^* N\to T^* N$. With this we can compute the Sasaki gradient of the exit time on $\partial T^* M\setminus \partial_0 T^* M$.

    Let $s\in (-1,1)$ and $c(s)$ be a curve in $T^* N$ with $c(0)=(z,\zeta)\in \partial T^* M\setminus \partial_0 T^* M$ and $\dot{c}(0)=\theta\in T_{z,\zeta}(\partial T^* M\setminus \partial_0 T^* M)$ with $\theta\perp X$. Let $F(s,t)=x(\pi(\phi_t(c(s)))$ be the smoothly extended version of the function used above. Again we use the implicit function theorem at a point $F(0,\tau(z,\zeta))=0$. The main observation is that since $(z,\zeta)\in \partial T^* M\setminus \partial_0 T^* M$ then by Hamilton's equations and boundary determination we know that $\dot{\gamma}_f^0\not=0$. Hence the same calculation as in the case of $S^* M^\circ$ gives us
    \begin{equation}
    \frac{\der \tau (c(s))}{\der s}\Big|_{s=0}=0\teksti{on} \partial T^* M\setminus \partial_0 T^* M
    \end{equation}
    which in particular proves $\gradh \tau(z,\zeta)=\gradv \tau (z,\zeta)=0$ for all $(z,\zeta)\in \partial S^* M\subset \partial T^* M\setminus \partial_0 T^* M$. 

    Next we will prove that $X\tau (z,\zeta )=-1$ on $\partial T^* M\setminus \partial_0 T^* M$. Note that the calculation in equation~\eqref{eqn:X derivative of exit time} does not work on the boundary so we will instead use the implicit function theorem. The main difference with the previous calculation is that we take $\dot{c}(0)=\theta\in T_{z,\zeta}(\partial T^* M\setminus \partial_0 T^* M )$ with $\theta \parallel X$. Let $A>0$ be some real number. Then locally we can write $(z,\zeta)=(z,\pm A,\eta)$ so that $X=\pm A\partial_x$ at $(z,\zeta)\in \partial T^* M\setminus \partial_0 T^* M$. Since we are only interested in taking a variation w.r.t.\ $X$ then by $X\parallel \theta$ we restrict to the case where $\der \pi (\der \phi_t (\theta))=J_\theta (t)$ and $J_\theta$ is the parallel Jacobi field characterized by $J_\theta(0)=(\pm A,0,\dots ,0)$. By the constant speed condition of the flow we have $\dot{\gamma}_f=(- A,0,\dots ,0)$ and $J_\theta (\tau(z,\zeta))=\dot{\gamma}_f$ so by applying the implicit function theorem to $F$ as above gives us
    \begin{equation}
        X\tau (z,\zeta)=-\frac{J^0_\theta (\tau(z,\zeta))}{\dot{\gamma}_f^0}=-\frac{- A}{- A}=-1\teksti{on} \partial T^* M\setminus \partial_0 T^* M.
    \end{equation}
    Restricting to $\partial S^* M\subset \partial T^* M\setminus \partial_0 T^* M$ finishes the proof of the Sasaki gradient of the exit time. Since $\nabla_{S^* M}\tau=-X$ then all further derivatives will be zero so it follows that $\tau\in C^ \infty (S^* M)$.
\end{proof}

Proposition~\ref{prop:Sasaki gradient of exit time} says that the exit time on a gas giant is geodesic. This is unique to gas giant geometry and is false on standard Riemannian manifolds, which is quickly verified, for example, on the unit disk in $\R^2$ equipped with the Euclidean metric.

Next we prove the first statement of proposition~\ref{prop:blow-up rates of u}:

\begin{lemma}
    \label{lma:asymptotics of integral function}
    Let $(M,g)$ be a non-trapping gas giant.
    Let $k\in \N$, $f\in x^k C^\infty(M,T^* M)$ and $If=0$. Then $u^f\in x^{k+1} C^\infty (S^* M^\circ)$.
\end{lemma}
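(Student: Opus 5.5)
Two things have to be shown: smoothness of $u^f$ on $S^* M^\circ$, and the vanishing rate $\abs{u^f}\le Cx^{k+1}$ near the boundary. The natural route is to split $S^*M$ near $\partial M$ into outgoing covectors ($\xi\le 0$) and incoming ones ($\xi>0$), estimate $u^f$ directly on the outgoing part, and reduce the incoming part to the outgoing one using $If=0$.

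\textbf{Smoothness in the interior.} Write
\begin{equation}
u^f(z,\zeta)=\int_0^{\tau(z,\zeta)}\lambda f(\phi_t(z,\zeta))\,\der t .
\end{equation}
By lemma~\ref{lma:regularity of function}, $\lambda f\in C^\infty(S^*M)$. The geodesic flow $\phi_t$ is smooth. By proposition~\ref{prop:Sasaki gradient of exit time}, $\tau\in C^\infty(S^*M)$. Differentiating under the integral and differentiating the upper limit therefore gives $u^f\in C^\infty(S^*M^\circ)$.

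\textbf{Outgoing covectors.} Fix $(x_0,y,\xi_0,\eta)\in S^*M$ with $0<x_0<\varepsilon$ and $\xi_0\le 0$. By the analysis preceding proposition~\ref{prop:asymptotic of exit time}, $x(t)$ is strictly decreasing along the geodesic, and $-1/\xi=1+\Order(x^2)$ with uniform constants (using lemma~\ref{lma:h-norm bound} to bound $h(\eta,\eta)$). Lemma~\ref{lma:regularity of function} gives $\abs{\lambda f}\le Cx^k$. Changing variables $\der x=\xi\,\der t$ yields
\begin{equation}
\abs{u^f(x_0,y,\xi_0,\eta)}\le\int_0^{x_0}C x^k\,\abs{\xi}^{-1}\der x\le C x_0^{k+1}.
\end{equation}
All constants are uniform in $(y,\xi_0,\eta)$ by proposition~\ref{prop:asymptotic of exit time}.

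\textbf{Incoming covectors.} Now take $\xi_0>0$. Since $If=0$, the integral over the full maximal geodesic through $(z,\zeta)$ vanishes. Hence $u^f(z,\zeta)=-u^f(z,-\zeta)$, i.e.\ minus the integral over the backward half-geodesic. The covector $-\zeta$ has $\xi$-component $-\xi_0<0$, so it is outgoing and the previous step gives $\abs{u^f(z,\zeta)}\le Cx_0^{k+1}$. This reflection identity uses $If=0$ on every maximal geodesic and non-trapping to make sense of both halves.

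\textbf{Higher derivatives and the main obstacle.} Membership in $x^{k+1}C^\infty$ is read here as smooth in the interior with the $\Order(x^{k+1})$ bound. If derivative bounds are also needed, I would obtain them from the same representation, differentiating $\lambda f\circ\phi_t$ and controlling $\der\phi_t$ by lemma~\ref{lma:Jacobi fields}; that is postponed to the later gradient estimates. The step I expect to be the main obstacle is uniformity near grazing directions $\xi_0\approx 0$ at small $x_0$. There the geodesic is not monotone in $x$ in general, and the bound $-1/\xi\le C$ near $x_0$ becomes delicate. For $\xi_0\le 0$ the monotonicity from the strict convexity region handles this, and the reflection trick moves every case with $\xi_0>0$ to $\xi_0<0$. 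So the key point to verify carefully is that for $x_0<\varepsilon$, every outgoing unit covector exits without $x$ ever increasing. The integral $\int_0^{x_0}(1-x^2h(\eta,\eta)\xi^{-2}\cdots)$ then needs care only where $\xi\to0$, which happens only at $x=x_0$ with $\xi_0=0$. There $\abs{\xi}\ge c\sqrt{x_0^2-x^2}\,x_0^{-1}\cdot x_0$ from $\xi^2=x_0^2h(x_0)-x^2h(x)$, and the integrand stays integrable with $\int_0^{x_0}x^k(x_0^2-x^2)^{-1/2}\der x\le Cx_0^{k}$. This gives only $x_0^{k}$ in the worst case, so I would refine the estimate using $\abs{\eta}$ bounded and $\xi^2=1-x^2h$, i.e.\ use the unit-speed normalization, which forces $\abs{\xi}\approx1$ when $x_0$ is small.
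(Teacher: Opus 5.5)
Your route is the paper's: smoothness of $u^f$ from that of $\lambda f$, $\phi_t$ and $\tau$; reduction to $\xi_0\le 0$ via $If=0$; monotone decrease of $x$ in the convex collar; and $\abs{u^f}\le Cx_0^k\tau$ with $\tau\lesssim x_0$. As written, however, the outgoing step is not justified near grazing covectors, and the repair in your last paragraph rests on a false claim.

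Lemma~\ref{lma:h-norm bound} is about the speed-$s$ short geodesics normalized by $h_0(\eta,\eta)=1$. On $S^*M$ one only has $\xi^2+x^2h(\eta,\eta)=1$, so $h(\eta,\eta)$ can be as large as $x^{-2}$. In particular $(x_0,y,0,\eta)$ with $x_0^2h(\eta,\eta)=1$ is a unit covector at every height. So $\abs{\eta}$ is not bounded, unit speed does not force $\abs{\xi}\approx 1$, and $-1/\xi=1+\Order(x^2)$ is not uniform in $\xi_0\le 0$. In the model $g=\der x^2+x^{-2}\der y^2$, that covector gives $x(t)=x_0\cos(t/x_0)$ and $\tau=\pi x_0/2$. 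So the uniform asymptotic $\tau=x_0+\Order(x_0^3)$ of proposition~\ref{prop:asymptotic of exit time} fails as well. The paper's proof cites it too, but only $\tau\le Cx_0$ is actually needed.

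Your grazing computation also loses a factor $x_0$. With $x_0^2h\approx 1$ one has $\xi^2=1-x^2h\approx(x_0^2-x^2)/x_0^2$, so $\abs{\xi}\ge c\sqrt{x_0^2-x^2}/x_0$. Then $\int_0^{x_0}x^k\abs{\xi}^{-1}\der x\le Cx_0\int_0^{x_0}x^k(x_0^2-x^2)^{-1/2}\der x\le Cx_0^{k+1}$. The ``worst case $x_0^k$'' is an artifact of treating $h(\eta,\eta)$ as $\Order(1)$.

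What is missing is a uniform bound $\tau\le Cx_0$ for outgoing unit covectors, and it follows in general along the same lines:
\begin{itemize}
\item By the computation in lemma~\ref{lma:constant of motion}, along the outgoing geodesic $\der h(\eta,\eta)/\der x=\partial_xh^{ij}\eta_i\eta_j=\Order(h(\eta,\eta))$. Hence in a thin collar $h(\eta,\eta)\ge E/2$, where $E=(1-\xi_0^2)/x_0^2$ is its initial value.
\item Consequently $2I\!I(\eta,\eta)\ge xh(\eta,\eta)\ge xE/2$.
\item Integrating $\der\xi^2/\der x=-2I\!I(\eta,\eta)$ from $x$ to $x_0$ gives $\xi(x)^2\ge\xi_0^2+\frac14(1-\xi_0^2)(1-x^2/x_0^2)\ge\frac14(1-x^2/x_0^2)$.
\item Hence $\tau=\int_0^{x_0}\abs{\xi}^{-1}\der x\le\pi x_0$, and $\abs{u^f}\le Cx_0^k\tau\le Cx_0^{k+1}$ uniformly, grazing directions included.
\end{itemize}

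There is also a harmless sign slip, shared with the paper. Since $\lambda f$ is odd in $\zeta$ and $t\mapsto\phi_t(z,-\zeta)$ traces the reversed orbit, $u^f(z,\zeta)-u^f(z,-\zeta)=If$. So $If=0$ gives $u^f(z,\zeta)=u^f(z,-\zeta)$. Only $\abs{u^f(z,\zeta)}=\abs{u^f(z,-\zeta)}$ is used, so nothing breaks.
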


\begin{proof}

The fact that $u^f\in C^\infty (S^* M^\circ)$ follows directly from $\lambda f\in C^\infty (S^* M)$ and $\tau\in C^\infty (S^* M)$, cf.\  lemma~\ref{lma:regularity of function} and proposition~\ref{prop:Sasaki gradient of exit time}. To prove that $u^f(z,\zeta)=\Order (x^{k+1})$ where the constants are uniform in $(x,y,\xi,\eta)$ let $z\in M^\circ$. By assumption $u^f (z,\zeta)+u^f(z,-\zeta)=I f= 0$. Namely we have
    \begin{equation}
    \abs{u^f(z,\zeta)}=\abs{u^f(z,\zeta)+u^f(z,-\zeta)-u^f(z,-\zeta)}=\abs{u^f(z,-\zeta)}
    \end{equation}
    so the order of magnitude depends only on the initial base point $z$ and we can choose the direction so that the covector $\zeta$ locally points to the boundary, i.e. $\zeta=(\xi,\eta)$ with $\xi\le 0$. Suppose we take $z$ close to the boundary and $\zeta$ so that $x$ is strictly decreasing along the geodesic $\gamma_{z,\zeta}$. Denote any uniform constant by $C>0$. Then
\begin{equation}
\begin{split}
    \abs{u^f(z,\zeta)}
    &\le
    \int_0^{\tau(z,\zeta)}\abs{f_\mu (\gamma_{z,\zeta}(t))\dot{\gamma}^\mu_{z,\zeta}(t)}\der t
    \\
    &\le
    C\int _0^{\tau(z,\zeta)}x(t)^k \der t
    \\
    &\le
    C\int_0^{\tau(z,\zeta)}x(0)^k\der t
    \\
    &=C\tau(z,\zeta)x(0)^k
    \\
    &=Cx(0)^{k+1}+\Order\left ( x(0)^{k+3} \right)
\end{split}
\end{equation}
    since $\tau(x,y,\xi,\eta)=x+\Order(x^{3})$ whenever $\xi\le 0$ by proposition~\ref{prop:asymptotic of exit time}.
\end{proof}

To conclude the section we prove two lemmas regarding the derivatives of the integral function.

\begin{lemma}
\label{lma:asymptotics of derivatives of integral funciton}
    Let $(M,g)$ be a non-trapping gas giant.
    Let $k\in \N$, $f\in x^k C^\infty(M,T^* M)$ and $If=0$. Then in the interior $S^* M^\circ$ we have
    \begin{equation}
        \abs{\gradv u^f}_g^2=\Order(x^{2k-2}),\quad \abs{\gradh u^f}_g^2=\Order(x^{2k-2}),\quad 
        \abs{X\gradv u^f}^2_g=\Order(x^{2k-4}).
    \end{equation}
\end{lemma}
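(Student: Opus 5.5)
The plan is to differentiate the integral formula for $u^f$ along variations in $S^*M$ and control the integrand by the Jacobi field estimates of lemma~\ref{lma:Jacobi fields}. Fix $(z,\zeta)\in S^*M^\circ$ near the boundary and a vector $\theta\in T_{(z,\zeta)}S^*M$ orthogonal to $X$, with $\theta$ either horizontal or vertical. By proposition~\ref{prop:Sasaki gradient of exit time} we have $\der\tau(\theta)=0$, so no boundary term appears and
\begin{equation}
\der u^f(\theta)=\int_0^{\tau(z,\zeta)}\der(\lambda f)\big(\der\phi_t(\theta)\big)\,\der t .
\end{equation}
The vector $\der\phi_t(\theta)$ is described by the normal Jacobi field $J_\theta(t)=\der\pi(\der\phi_t\theta)$ and its covariant derivative $\Der_tJ_\theta$. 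The two cases are $J_\theta(0)=\der\pi\theta$, $\Der_tJ_\theta(0)=0$ for horizontal $\theta$, and $J_\theta(0)=0$, $\Der_tJ_\theta(0)=K\theta$ for vertical $\theta$.

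As in lemma~\ref{lma:asymptotics of integral function}, the identity $u^f(z,\zeta)=-u^f(z,-\zeta)$ (from $If=0$) lets us assume $\xi\le0$. Then $x(t)\le x(0)$ along the geodesic, and $\tau=x(0)+\Order(x(0)^3)$ by proposition~\ref{prop:asymptotic of exit time}.

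\textbf{Horizontal gradient.} Lemma~\ref{lma:Jacobi fields} gives $|J|_{\bar g}\le C$ and $|\Der_tJ|_{\bar g}\le Cx(t)^{-1}$. We convert these into Euclidean bounds on the components of $\der\phi_t\theta$ in $(x,y,\xi,\eta)$ coordinates, accounting for the $x^{-2}$ factors relating $\eta$ to $\dot y$ and the Christoffel symbols. Combined with $|\der\lambda f|_e\le Cx^{k-1}$ from lemma~\ref{lma:regularity of function}, this yields an integrand of size $Cx^{k-1}$ times a factor depending on how $|\theta|_{\text{Sasaki}}$ compares with $|\theta|_e$. Integrating over a time interval of length $\Order(x(0))$ gives a bound of the form $Cx(0)^{k}\cdot x(0)^{-1}$. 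Squaring and measuring in the $g$-norm should produce $\Order(x^{2k-2})$ for $|\gradh u^f|_g^2$.

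\textbf{Vertical gradient.} The same computation applies with the vertical initial data.

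\textbf{The term $X\gradv u^f$.} Use the commutator $[X,\gradv]=-\gradh$ (the standard structure equation, valid in the interior) to write
\begin{equation}
X\gradv u^f=\gradv Xu^f-\gradh u^f=-\gradv\lambda f-\gradh u^f .
\end{equation}
Corollary~\ref{cor:regularity of lambda f} gives $|\gradv\lambda f|_g^2=\Order(x^{2k-4})$, and the horizontal bound just obtained is better than this. Hence $|X\gradv u^f|_g^2=\Order(x^{2k-4})$, which is the third claim.

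The main obstacle is the bookkeeping of the weights. The Sasaki metric, the $g$-norm of normal bundle sections, and the coordinate representation $(\xi,\eta)$ all scale differently in $x$; in particular $\eta$ is large compared with $\dot y$. The Jacobi bound $|\Der_tJ|\le Cx^{-1}$ must combine with $\tau\sim x(0)$ so that exactly one power of $x$ is lost, not two. I would first carry out the estimate in the model $g=\der x^2+x^{-2}\der y^2$, where $J$ can be computed explicitly, to fix the correct powers, and then treat the general $h$ as a perturbation with $\Order(x)$ errors.
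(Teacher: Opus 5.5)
Your outline is the paper's proof. You differentiate under the integral using $\nabla_{S^*M}\tau=-X$, represent $\der\phi_t\theta$ by normal Jacobi fields, and feed in lemma~\ref{lma:Jacobi fields}, lemma~\ref{lma:regularity of function} and $\tau\le Cx$. You then get $X\gradv u^f$ from $[X,\gradv]=-\gradh$ and corollary~\ref{cor:regularity of lambda f}. Your explicit reduction to $\xi\le 0$ via $u^f(z,\zeta)=-u^f(z,-\zeta)$ is legitimate, since the antipodal map is a Sasaki isometry preserving $\Hor$ and $\Ver$; the paper uses it only implicitly.

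\textbf{The gap.} The exponents, which are the whole content of the lemma, are never derived. You posit that the conversion factor is $x^{-1}$ and conclude the result ``should'' be $\Order(x^{2k-2})$. Carried out as you describe, bounding the integrand by $|\der\lambda f|_e\,|\der\phi_t\theta|_e$, this is wrong for horizontal $\theta$:
\begin{itemize}
\item A Sasaki-unit horizontal vector such as $\delta_x$ or $x\delta_{y^i}$ has $\partial_\eta$-components $\Gamma^i_{0j}\eta_i$ or $x\Gamma^0_{ij}\xi$. These are of size $x^{-2}$, because $\Gamma^i_{j0}\sim x^{-1}$, $\Gamma^0_{ij}\sim x^{-3}$ and $|\eta|_h\le x^{-1}$ on $S^*M$.
\item This size persists along the flow. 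For example, in the model $g=\der x^2+x^{-2}\der y^2$ at $(x_0,y_0,-1,0)$, the unit vector $x_0\delta_y=x_0\partial_y-x_0^{-2}\partial_\eta$ is carried by $\der\phi_t$ with $\delta\eta\equiv-x_0^{-2}$.
\end{itemize}
So the factor is $x^{-2}$, and your scheme only gives $|\gradh u^f|_g^2=\Order(x^{2k-4})$. For vertical $\theta$ one has $|\der\phi_t\theta|_e\lesssim x^{-1}$, so your count is right there. The third estimate also survives, since it only needs $\Order(x^{2k-4})$ from $\gradh u^f$. The second claim, however, is not reached.

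\textbf{The fix.} Track components instead of norms. Write $\lambda f=f_0\xi+x^2h^{ij}f_i\eta_j$. The large components of $\der\phi_t\theta$ are $\delta\eta=\Order(x^{-2})$ and $\delta\xi=\Order(x^{-1})$; a Gronwall argument on the time scale $\tau\sim x$ keeps these sizes along the flow.
\begin{itemize}
\item These large components meet only the small derivatives $\partial_{\eta_j}\lambda f=x^2h^{ij}f_i=\Order(x^{k+2})$ and $\partial_\xi\lambda f=f_0=\Order(x^k)$.
\item The large derivative $\partial_x\lambda f=\Order(x^{k-1})$ meets only $\delta x=\Order(1)$.
\item $\partial_y\lambda f=\Order(x^k)$ meets $\delta y=\Order(x)$.
\end{itemize}
Hence the integrand is $\Order(x^{k-1})$, and integrating over $\tau\le Cx$ gives $|\gradh u^f|_g=\Order(x^{k})$, which is better than claimed.

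The paper does this bookkeeping on $M$ rather than in fiber coordinates on $T^*M$. It expresses the integrands through the Jacobi field, as $f(\Der_tJ_\theta)$ and $\der f(J_\theta,\dot\gamma)$, so it never has to size the fiber components of $\der\phi_t\theta$. It bounds these by $Cx^{k-1}$ using $|f|\le Cx^k$ and lemma~\ref{lma:Jacobi fields}, and integrates to a coordinate size $C\tau x^{k-1}\le Cx^k$. Only at the end does it convert to the $g$-norm via $|\cdot|_g^2\le Cx^{-2}|\cdot|_e^2$, which comes from $g_{ij}=x^{-2}h_{ij}$ acting on the normal-bundle vector.
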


\begin{proof}
    We begin by computing the vertical and horizontal gradients of $u^f$. To clear up notation we omit the identification $\lambda$ from the following calculations and let the arguments define if $f$ is a function or a 1-form. Let $c(s)$ be a curve in $S^* M^\circ$ so that $c(0)=(z,\zeta)$ and $\dot{c}(0)=\theta$ for some $\theta\in T_{z,\zeta} S^* M^\circ$, with $\theta \perp X$. Then by proposition~\ref{prop:Sasaki gradient of exit time} and smoothness of $f$
    \begin{equation}
    \begin{split}
        \partial_\theta u^f(z,\zeta)
        &=
        \frac{\der }{\der s}\int_0^{\tau(c(s))}f(\phi_t(c(s)))\der t |_{s=0}\\
        &=f(\phi_{\tau(z,\zeta)}(z,\zeta))\frac{\der }{\der s}\tau(c(s))|_{s=0} +\int_0^{\tau(z,\zeta)}\frac{\der }{\der s}f(\phi_t(c(s)))|_{s=0}\der t\\
        &=\int_0^{\tau(z,\zeta)}\frac{\der }{\der s}f(\phi_t(c(s)))|_{s=0}\der t
    \end{split}
    \end{equation}

    Let $J_\theta(t)$ be a normal jacobi field along $\gamma_t=\pi (\phi_t(z,\zeta))$ with $J_\theta(0)=\der \pi (\theta)\in \Hor$ and $\Der_t J_\theta(0)=K(\theta)\in \Ver$. Since the initial condition of the geodesic flow is in the interior $S^* M^\circ$ then we can identify the geodesic flow from the cotangent bundle to the tangent bundle via the gas giant metric $g$ and hence 
    \begin{equation}
    \der \phi_t(\theta)=(\der \pi (\der \phi_t(\theta)),K(\der \phi_t(\theta)))=(J_\theta(t),\Der_t J_\theta (t))\in \Hor\oplus \Ver.
    \end{equation}
    By chain rule we have that 
    \begin{equation}
    \begin{split}
    \frac{\der}{\der s} f(\phi_t(c(s)))\Big|_{s=0}
    &=
    \der f\frac{\partial }{\partial s}\phi_t(c(s))\Big|_{s=0}
    \\
    &=
    \der f\left(\der \phi_t \frac{\partial }{\partial s}c(s)\Big|_{s=0}\right)
    \\
    &=
    \der f(\der \phi_t(\theta))
    \\
    &=
    \der f(J_\theta(t),\Der_t J_\theta(t)).
    \end{split}
    \end{equation}
    so that combining everything gives us
    \begin{equation}
    \gradv u^f(z,\zeta)=\int_0^{\tau(z,\zeta)} f(\Der_t J_\theta(t))\,\der t
    \end{equation}
    and
    \begin{equation}
    \gradh u^f(z,\zeta)=\int_0^{\tau(z,\zeta)}\der f(J_\theta(t),\dot{\gamma}_{z,\zeta} (t))\,\der t.
    \end{equation}
    Using the bounds of lemma~\ref{lma:Jacobi fields} for Jacobi fields, lemma~\ref{lma:regularity of function} for the blow-up estimates of $f$ and proposition~\ref{prop:asymptotic of exit time} for the asymptotic of the exit time gives us in a similar fashion as in the proof of lemma~\ref{lma:asymptotics of integral function} that
\begin{equation}
\begin{split}
        \abs{\gradv u^f(z,\zeta)}_g^2
        &\le
        C x^{-2}\abs{\gradv u^f(z,\zeta)}_e^2
        \\
        &\le
        Cx^{-2}
        \left ( \int_0^{\tau(z,\zeta)}
        \abs{f(\Der_t J_\theta(t))}
        \der t
        \right )^2
        \\
        &\le
        C x^{-2}(\tau(z,\zeta) x^{k-1})^2
        \\
        &\le
        Cx^{2k-2}
\end{split}
\end{equation}
and
\begin{equation}
\begin{split}
        \abs{\gradh u^f(z,\zeta)}_g^2
        &\le
        C x^{-2}\abs{\gradh u^f(z,\zeta)}_e^2
        \\
        &\le
        Cx^{-2}\left (  \int_0^{\tau(z,\zeta)} \abs{\der f(J_\theta(t),\dot{\gamma}_{z,\zeta} (t))}\,\der t \right )^2
        \\
        &\le
        C x^{2k-2}
\end{split}
\end{equation}
    where $C$ denotes any uniform constant. To prove the last claim we use the commutator formula $[X,\gradv ]=-\gradh$ in $S^* M^\circ$ and corollary~\ref{cor:regularity of lambda f} to compute
    \begin{equation}
    \begin{split}
        \abs{X\gradv u^f(z,\zeta)}_g^2
        &=
        \abs{\gradv Xu^f(z,\zeta)-\gradh u^f(z,\zeta)}^2_g
        \\
        &\le
        \abs{\gradv Xu^f(z,\zeta)}_g^2+\abs{\gradh u^f(z,\zeta)}_g^2
        \\
        &\le
        C\left(x^{2k-4}+x^{2k-2}\right)
        \\
        &\le
        2Cx^{2k-4}
    \end{split}
    \end{equation}
    where $C$ is a uniform constant and we used the arbitrary estimate $x^2\le 1$.
\end{proof}

\begin{lemma}
\label{lma:vertical gradient on boundary}
    Let $(M,g)$ be a non-trapping gas giant and let~$k\in \N$, $f\in x^k C^\infty(M,T^* M)$. If $If = 0$ then $\gradv u^f|_{\partial S^* M}=0$.
\end{lemma}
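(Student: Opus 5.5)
We need to show that the vertical gradient of $u^f$ vanishes on $\partial S^*M=\partial_+S^*M\sqcup\partial_-S^*M$. The two components behave differently. On $\partial_-S^*M$ the geodesic starts on the boundary heading out, so $\tau=0$ there, and $u^f$ vanishes identically on a neighbourhood of $\partial_- S^*M$ inside $\partial S^*M$. On $\partial_+S^*M$ we have $u^f|_{\partial_+S^*M}=If=0$. In both cases $u^f|_{\partial S^*M}\equiv 0$ as a function on the manifold $\partial S^*M$. Hence every derivative of $u^f$ along directions tangent to $\partial S^*M$ vanishes there. The plan is then to show that vertical directions at points of $\partial S^*M$ are tangent to $\partial S^*M$, and to make this rigorous in the extended smooth structure used in the proof of proposition~\ref{prop:Sasaki gradient of exit time}.

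\textbf{Step 1: vertical directions are tangent to $\partial S^*M$.} At $(0,y,\pm1,\eta)$, a vertical variation keeps the base point fixed, so $x=0$ is preserved. The constraint $H=\frac12$ reads $\xi^2+x^2h^{ij}\eta_i\eta_j=1$, which at $x=0$ becomes $\xi=\pm1$ with $\eta$ arbitrary. Thus the fiber of $S^*M$ over a boundary point is exactly the set $\{(0,y,\pm1,\eta)\}$, which is contained in $\partial S^*M$. Vertical vectors there are therefore combinations of the $\partial_{\eta_i}$. The vertical gradient in the coordinate formula~\eqref{eqn:vertical gradient local} is assembled from $\partial_{\zeta_\mu}u^f$ projected orthogonally to $\zeta$, which at the boundary reduces to the $\partial_{\eta_i}$-derivatives with coefficients $g_{\mu\nu}$.

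\textbf{Step 2: the weights in the vertical gradient.} The coefficients $g_{ij}=x^{-2}h_{ij}$ blow up, so the vanishing of $\partial_\eta u^f$ at $x=0$ alone is not enough. We need $\partial_{\eta_i}u^f=o(x^{2})$ uniformly, or at least $x^{-2}\partial_\eta u^f\to0$ in the relevant norm. Here I would use the representation from the proof of lemma~\ref{lma:asymptotics of derivatives of integral funciton},
\[
\gradv u^f(z,\zeta)=\int_0^{\tau(z,\zeta)}f(\Der_tJ_\theta(t))\,\der t .
\]
The Jacobi fields obey the bounds of lemma~\ref{lma:Jacobi fields}, and $f\in x^kC^\infty$, so the integrand is $O(x^{k-1})$.

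\textbf{Step 3: pass to the limit at the boundary.} On $\partial_-S^*M$ the integral is over an interval of length $\tau=0$. On $\partial_+S^*M$ I would approach along points $(x,y,1,\eta)$ and combine the identity $u^f(z,\zeta)=-u^f(z,-\zeta)$ with proposition~\ref{prop:asymptotic of exit time}. This gives $\abs{\gradv u^f}_g^2=O(x^{2k-2})$ near the boundary, which tends to $0$ for $k\ge 2$. For small $k$ I would use the symmetry instead: differentiating $u^f(z,\zeta)+u^f(z,-\zeta)=0$ vertically relates $\gradv u^f$ at $\partial_+$ to the exiting branch. I would then apply this on the short final piece of length $\tau\sim x$ together with $u^f(z,\zeta)=O(x^{k+1})$.

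\textbf{Main obstacle.} I expect the hardest part to be justifying continuity of $\gradv u^f$ up to $\partial S^*M$ in the degenerate metric. The fiber $S^*_zM$ is noncompact at $x=0$ because $\eta$ is unbounded, so the uniform constants in lemma~\ref{lma:Jacobi fields} and proposition~\ref{prop:asymptotic of exit time} must be checked to hold uniformly in $\eta$. I would first try to handle this with the smooth extension and implicit-function argument of proposition~\ref{prop:Sasaki gradient of exit time}. That argument shows $u^f=\int_0^\tau\lambda f\circ\phi_t\,\der t$ is smooth near $\partial T^*M\setminus\partial_0T^*M$. Its $\eta$-derivatives at $x=0$ are then tangential derivatives of a function vanishing on $\partial S^*M$, and hence they vanish.
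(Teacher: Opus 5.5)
Your opening paragraph, Step 1 and the closing sentence of your last paragraph are exactly the paper's proof. The paper argues that $u^f\equiv 0$ on $\partial S^*M$, since $If=0$ on $\partial_+S^*M$ and $\tau=0$ on $\partial_-S^*M$, and that the fiber over a boundary point is $\{(0,y,\pm1,\eta)\}\subset\partial S^*M$, so every fiber derivative of $u^f$ there vanishes.

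Steps 2--3 are not needed and come from a misreading of \eqref{eqn:vertical gradient local}. There $g_{\mu\nu}$ cancels against $g^{\nu\lambda}$, so the components of $\gradv u$ are just $\partial_{\zeta_\mu}u$. The $x^{-2}$ weight only enters $\abs{\gradv u}_g$. Decay of that norm is not what this lemma claims, and your Step 3 does not establish it for $k\le 1$; where it is needed later, it comes from the $x^\infty$ rates in proposition~\ref{prop:blow-up rates of u}.
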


\begin{proof}
    Fix $y\in \partial M$. Let $c:(-1,1)\to \partial S^* M$ be a smooth path with
    \begin{equation}
    c(s)
    =
    (0,y,\zeta(s))
    =
    (0,y,\pm 1, \eta (s)).
    \end{equation}
    For $(0,y,1,\eta(s))$ we have $u^f(c(s))=If= 0$ since, e.g.\  by the gnomonic projection discussion in section~\ref{sec:Boundary determination}, $(0,y,1,\eta(s))$ is an initial condition of a maximal geodesic for all $s\in (-1,1)$. For $(0,y,-1,\eta(s))$ the corresponding maximal geodesic will be stationary so also in this case $u^f(c(s))=0$ for all $s\in (-1,1)$. Hence
    \begin{equation}
    \gradv u^f|_{\partial S^* M}=\frac{\der }{\der s}u^f(c(s))\Big|_{s=0}=\frac{\der }{\der s} 0\Big|_{s=0}=0
    \end{equation}
    which proves the claim.
\end{proof}

We combine everything proven in this section to prove proposition~\ref{prop:blow-up rates of u}.

\begin{proof}[Proof of proposition~\ref{prop:blow-up rates of u}]
    Let $k\in \N$, $f\in x^k C^\infty (M,T^* M)$ and $If=0$ on a non-trapping gas giant $(M,g)$. By lemma~\ref{lma:asymptotics of integral function} we have $u^f\in x^{k+1} C^\infty (S^* M^\circ)$ and by corollary~\ref{cor:regularity of lambda f} we have $Xu^f\in C^\infty (S^* M)$. By lemma~\ref{lma:asymptotics of derivatives of integral funciton} and corollary~\ref{cor:regularity of lambda f} we have the blow-up rates of the derivatives in the claim of proposition~\ref{prop:blow-up rates of u}. Finally by lemma~\ref{lma:vertical gradient on boundary} we have $\gradv u^f |_{\partial S^* M}=0$ and this concludes the proof of proposition~\ref{prop:blow-up rates of u}.
\end{proof}

\section{Pestov identity}
\label{sec:Pestov identity}

In this section we prove a Pestov identity for functions $u\in \Omega$ on a gas giant and use the Pestov identity to prove the lemma~\ref{keylemma:pestov}.

\begin{proposition}
\label{prop:Pestov_with_regularities}
    Let $(M,g)$ be a non-trapping gas giant with $R\le 0$ and $u\in \Omega$ in the sense of definition~\ref{def:niceness-of-uf}. Then we have
    \begin{equation}
    \norm{\gradv Xu}^2\ge \norm{X\gradv u}^2+(n-1)\norm{Xu}^2.
    \end{equation}
\end{proposition}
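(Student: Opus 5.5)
We will follow the standard route: first establish the Pestov identity
\begin{equation}
\norm{\gradv Xu}^2=\norm{X\gradv u}^2-(R\gradv u,\gradv u)+(n-1)\norm{Xu}^2
\end{equation}
for $u\in\Omega$, with all boundary terms vanishing. The claimed inequality then follows at once, because $R\le 0$ gives $-(R\gradv u,\gradv u)\ge 0$. The algebraic input is the commutator formulas on $S^*M^\circ$, namely $[X,\gradv]=-\gradh$, $[X,\gradh]=R\gradv$, and $\gradh\cdot\gradv-\gradv\cdot\gradh=(n-1)X$, together with the integration by parts formulas $(Xu,w)=-(u,Xw)$, $(\gradv u,V)=-(u,\gradv\cdot V)$ and $(\gradh u,W)=-(u,\gradh\cdot W)$. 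From these one gets the pointwise Pestov identity of Paternain--Salo--Uhlmann type,
\begin{equation}
\abs{\gradv Xu}^2-\abs{X\gradv u}^2+(R\gradv u,\gradv u)-(n-1)(Xu)^2=\text{divergence terms},
\end{equation}
which we will write explicitly as $X(\cdot)+\gradh\cdot(\cdot)+\gradv\cdot(\cdot)$ applied to bilinear expressions in $u,Xu,\gradv u,\gradh u$. Since $u$ is only known to be smooth in the interior, we will integrate over the truncated region $S^*M_\varepsilon=\{x\ge\varepsilon\}$ and let $\varepsilon\to0$.

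On $S^*M_\varepsilon$ the vertical divergence terms integrate to zero, since the fibres are closed spheres away from the boundary. The $X$- and horizontal divergence terms become integrals over $\partial S^*M_\varepsilon=\{x=\varepsilon\}$. With respect to $\der\Sigma$ these boundary integrands have the form $\langle\dot\gamma^0 Xu,\ldots\rangle$, $\langle\gradh u,\ldots\rangle$ paired with $\gradv u$ or $u$, multiplied by the induced measure. The induced measure grows at most like $\varepsilon^{-N}$ for some fixed $N$ depending on $n$, coming from $\der V_g=x^{-(n-1)}\der x\,\der V_h$ and the fibre geometry in $(x,y)$ coordinates. Properties (1) and (3) of Definition~\ref{def:niceness-of-uf} give $u$, $\gradv u$, $\gradh u=\Order(x^\infty)$ in $L^\infty$, and property (2) gives $Xu$ bounded. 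Every boundary term contains at least one factor of $u$ or $\nabla_{S^*M}u$ (each term is bilinear, e.g.\ $\langle \gradv u, \gradh u\rangle$ or $\langle Xu,\ldots\rangle$ paired with $\gradv u$), so each is $\Order(\varepsilon^\infty)\cdot\varepsilon^{-N}\to0$.

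For the bulk terms we use property (4): $\gradv Xu, X\gradv u\in L^2(N)$. The term $\norm{Xu}^2$ is finite because $Xu=-\lambda f$ vanishes to infinite order for our application. In general we will instead obtain it from the identity itself via monotone convergence, using $Xu\in C^\infty$ together with the finiteness of the other two terms. The curvature term is handled in the same way: $R\sim x^{-2}$ is only polynomial, while $\gradv u=\Order(x^\infty)$, so $(R\gradv u,\gradv u)$ is integrable despite the infinite volume. Letting $\varepsilon\to0$ by dominated convergence then gives the identity on all of $S^*M$.

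The main obstacle is the rigorous bookkeeping of the boundary terms on $\{x=\varepsilon\}$. The Sasaki volume and the $g$-norms degenerate polynomially, and $S^*M$ is not a subbundle at $x=0$, so we must verify that all Jacobian factors of $\der\sigma$ on the level sets are polynomially bounded in $\varepsilon^{-1}$. We will do this in the coordinates $(x,y,\xi,\eta)$ using the appendix formulas: the fibre $\xi^2+x^2\abs{\eta}_h^2=1$ has $h$-size $\sim x^{-1}$ in $\eta$, which contributes only powers of $\varepsilon^{-1}$. This polynomial growth is then absorbed by the $x^\infty$ decay from Definition~\ref{def:niceness-of-uf}.
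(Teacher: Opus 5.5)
Your proposal is correct and follows essentially the same route as the paper. You apply the Pestov identity with boundary term on the truncation $S^*M_\varepsilon=\{x\ge\varepsilon\}$, kill the boundary term on $\{x=\varepsilon\}$ because the $x^\infty$ decay beats the polynomial growth $\varepsilon^{-(n-1)}$ of the boundary measure, pass to the limit in the bulk terms (including the curvature term via $R\sim x^{-2}$), and discard $-(R\gradv u,\gradv u)\ge 0$. The only differences are cosmetic: the paper cites \cite{GGSU2019} for the truncated identity and runs the limit under the finite-order rates of Proposition~\ref{prop:blow-up rates of u}, while you derive the identity from the commutator formulas and use properties (1), (3), (4) of $\Omega$ directly, which matches the hypothesis $u\in\Omega$ more literally.
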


\begin{proof}
    Let $M_\varepsilon=\{x\ge \varepsilon\}$ denote the truncation manifold for $\varepsilon>0$. Then $u\in C^\infty(S^* M_\varepsilon)$ so by~\cite{GGSU2019} the following Pestov identity holds
    \begin{align}
    \label{eqn:Pestov truncated}
        \norm{\gradv X u}_\varepsilon^2=\norm{X\gradv u}_\varepsilon^2-(R\gradv u,\gradv u)_\varepsilon+(n-1)\norm{Xu}_\varepsilon^2+B_\varepsilon(u)
    \end{align}
    where $B_\varepsilon(u)$ is a boundary term on $\partial S^* M_\varepsilon$ defined as
    \begin{equation}
        \int_{\partial S^* M_\varepsilon} \ip{\gradv u}{\gradh u}-(n-1) uXu \, \der \sigma
    \end{equation}
    and the subscript $\varepsilon$ denotes restriction of the norms to $L^2 (S^* M_\varepsilon)$. We will show that this identity converges as $\varepsilon \to 0$ in the weaker setting where $u$ has all the same properties as $u^f$ in  proposition~\ref{prop:blow-up rates of u}, namely we assume that $u\in x^{k+1} C^\infty (S^* M^\circ )$, $Xu\in x^k C^\infty (S^* M)$ and we have the asymptotic rates 
    \begin{align}
    \abs{\gradv u}_g^2&=\Order(x^{2k-2}),\quad &\abs{\gradh u}_g^2&=\Order(x^{2k-2}),\\
    \abs{\gradv X u}^2_g&=\Order(x^{2k-4}),\quad &\abs{X\gradv u}^2_g&=\Order(x^{2k-4}) 
    \end{align}
    for any integer $k$. In the following calculations we take the integer $k$ large enough, $k\ge n+1$ to be precise, so that various upper bounds converge to zero as $\varepsilon\to 0$.

    First we have
\begin{equation}
\begin{split}
    \abs{B_\varepsilon(u)}
    &\le \int_{\partial S^* M_\varepsilon} \abs{\ip{\gradv u}{\gradh u}}+\abs{(n-1)uXu}\, \der \sigma
    \\
    &\le
    \Vol^{n-1} (x=\varepsilon )\omega_n\left [\abs{\gradv u}_{g,x=\varepsilon}\abs{\gradh u}_{g,x=\varepsilon} +C(n-1)\varepsilon^{k+1+k} \right]
    \\
    &\le
    C'\varepsilon^{-(n-1)}\left [ \varepsilon^{2k-2}+(n-1)\varepsilon^{2k+1}\right],
\end{split}
\end{equation}
    where $\omega_n$ denotes the constant spherical measure of $S^*_z M_\varepsilon$ on $z\in \{x=\varepsilon\}$, and $C$ and $C'$ are some uniform constants. By choosing $k$ large enough we see that $B_\varepsilon (u)\to 0$ as $\varepsilon \to 0$. Next
\begin{equation}
\begin{split}
        \norm{X\gradv u}^2-\norm{X\gradv u}^2_\varepsilon
        &=
        \int_{S^* M\setminus S^* M_\varepsilon} \abs{X\gradv u}_g^ 2\der \Sigma
        \\
        &\le
        C\int_{S^* M\setminus S^* M_\varepsilon} x^{2k-4} \der \Sigma
        \\
        &\le
        C\Vol_{\overline{g}}(M\setminus M_\varepsilon ) \int_0^\varepsilon x^{2k-4-(n-1)}\der x
        \\
        &\le
        C\Vol_{\overline{g}}(M\setminus M_\varepsilon ) \varepsilon^{2k-2-n}.
\end{split}
\end{equation}
    By choosing $k$ large enough we see that $\norm{X\gradv u}_\varepsilon^2\to \norm{X\gradv u}^2$ as $\varepsilon\to 0$. Likewise
    \begin{equation}
    \norm{Xu}^2-\norm{Xu}^2_\varepsilon\le C \varepsilon^{2k+2-n}\teksti{and} \norm{\gradv Xu}^2-\norm{\gradv Xu}^2_\varepsilon\le C\varepsilon^{2k-2-n}
    \end{equation}
    so also $\norm{Xu}_\varepsilon^2\to \norm{Xu}^2$ and $\norm{\gradv Xu}_\varepsilon^2\to \norm{\gradv Xu}^2$ as $\varepsilon \to 0$ when $k$ is large enough. For completeness we will also show that the curvature term converges even though we already have all the necessary bits to prove the claim. In the smooth structure $(x,y)$ near the boundary we have $R\sim x^{-2}$ in the sectional curvature sense, cf.~\cite[proposition 1]{dHIKM2024}. Then
\begin{equation}
\begin{split}
    (R\gradv u,\gradv u)-(R\gradv u,\gradv u)_\varepsilon
    &\le
    C\int_{S^* M\setminus S^* M_\varepsilon} x^{-2} \abs{\gradv u}^2 _g \der \Sigma
    \\
    &\le
    C\int_{S^* M\setminus S^* M_\varepsilon} x^{-2+2k-2}\der \Sigma
    \\
    &\le
    C\varepsilon^{2k-2-n}
\end{split}
\end{equation}
    which shows that $(R\gradv u,\gradv u)_\varepsilon\to (R\gradv u,\gradv u)$ as $\varepsilon\to 0$. Hence for a function $u$, that fulfills the same properties as $u^f$ in proposition~\ref{prop:blow-up rates of u} for $k$ large enough, we get the Pestov identity
    \begin{align}
    \label{eqn:Pestov full}
        \norm{\gradv X u}^2=\norm{X\gradv u}^2-(R\gradv u,\gradv u)+(n-1)\norm{Xu}^2.
    \end{align}
    in the limit $\varepsilon\to 0$ by~\eqref{eqn:Pestov truncated}. Therefore the identity~\eqref{eqn:Pestov full} also holds under the stronger assumption $u\in \Omega$. Since $R\le 0$ then this proves the claim.
\end{proof}

\begin{corollary}
\label{cor:corollary of Pestov}
Let $(M,g)$ be a non-trapping gas giant with non-positive curvature.
    Let $u\in \Omega$ and $Xu(z,\zeta)$ be linear in $\zeta$ for all $z\in M$.
    Then $X\gradv u=0$.
\end{corollary}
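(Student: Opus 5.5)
The plan is to combine the inequality of proposition~\ref{prop:Pestov_with_regularities} with an exact computation of $\norm{\gradv Xu}^2$ in terms of $\norm{Xu}^2$ for functions that are linear in $\zeta$. Write $Xu=\lambda w$ for a $1$-form $w$, that is, $Xu(z,\zeta)=w_\mu(z)g^{\mu\nu}(z)\zeta_\nu$. Fibrewise, $\lambda w$ is the restriction to the unit cosphere $S^*_zM$ of a linear function. For such a function the standard identity on each fibre gives
\begin{equation}
\norm{\gradv \lambda w}^2 = (n-1)\norm{\lambda w}^2 .
\end{equation}
This holds because $\lambda w$ is a first-order spherical harmonic, so the vertical Laplacian acts on it with eigenvalue $n-1$. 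Equivalently, $\gradv \lambda w = w^\sharp - (\lambda w)\zeta^\sharp$, and integrating $\abs{w}_g^2-(\lambda w)^2$ over the sphere with respect to the uniform measure yields $(n-1)$ times the integral of $(\lambda w)^2$.

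The fibrewise identity is integrated over $S^*M$ using the product structure of $\der\Sigma$, namely the base volume times the fibre measure. Since $u\in\Omega$, both $\gradv Xu$ and $Xu$ lie in $L^2$ (the latter by the decay established in the proof of the proposition), so both sides are finite and the identity holds globally. Substituting into proposition~\ref{prop:Pestov_with_regularities} gives
\begin{equation}
(n-1)\norm{Xu}^2 = \norm{\gradv Xu}^2 \ge \norm{X\gradv u}^2 + (n-1)\norm{Xu}^2 .
\end{equation}
Hence $\norm{X\gradv u}^2\le 0$, so $X\gradv u=0$ almost everywhere. Because $u$ is smooth in the interior, $X\gradv u=0$ pointwise on $S^*M^\circ$.

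The main obstacle is justifying the fibrewise identity in the gas giant setting, where $S^*M$ is not a subbundle up to the boundary and the Sasaki volume form degenerates. I would handle this by working only on the truncations $S^*M_\varepsilon$ from the proof of proposition~\ref{prop:Pestov_with_regularities}. There the metric is smooth and the identity is classical. One then passes to the limit $\varepsilon\to0$ by monotone convergence, using the integrability already verified for $u\in\Omega$. A second point to check is that the Sasaki fibre measure on $S^*_zM$ is the round measure induced by $g^{-1}(z)$, so that the spherical harmonic computation applies at each interior point. This follows from the coordinate formulas in the appendix.
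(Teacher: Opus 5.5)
Your proposal is correct and follows essentially the same route as the paper. The paper also uses the fibrewise identity $\int_{S^*_zM}\abs{\gradv w}^2\,\der\zeta=(n-1)\int_{S^*_zM}\abs{w}^2\,\der\zeta$ for $w$ linear in $\zeta$, integrates it over the truncations $S^*M_\varepsilon$, lets $\varepsilon\to0$, and combines the result with proposition~\ref{prop:Pestov_with_regularities} to force $\norm{X\gradv u}^2=0$; your explicit computation via $\gradv\lambda w=w^\sharp-(\lambda w)\zeta^\sharp$ just supplies the fibre identity that the paper cites from the literature.
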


\begin{proof}
    Let $\varepsilon>0$ and $M_\varepsilon$ be the truncated manifold as in the proof of proposition~\ref{prop:Pestov_with_regularities}.
   If $w\in x^\infty C^\infty (S^* M)$ and $w(z,\zeta)$ is linear in $\zeta$ for all $z$, then
   on every interior fiber we have (cf.~\cite[proof of lemma 4]{Ilm2018})
    \begin{align}
    	\label{eqn:sphere integral 1}
    	\int_{S^* _z M_\varepsilon} \abs{\gradv w(z,\zeta)}^2\der \zeta
        =
        (n-1)\int_{S^* _z M_\varepsilon} \abs{w(z,\zeta)}^2\der \zeta
        .
    \end{align}
    Therefore
    \begin{equation}
    \norm{\gradv w}_\varepsilon^2=(n-1) \norm{ w}_\varepsilon^2.
    \end{equation}
    By our assumptions on $u$ we may use this on $w=Xu$ and take the limit $\eps\to0$, so we get
    \begin{equation}
    \norm{\gradv Xu}^2=(n-1) \norm{ Xu}^2
    .
    \end{equation}
    Combining this with proposition~\ref{prop:Pestov_with_regularities} gives $\norm{X\gradv u}^2=0$.
\end{proof}

We conclude by proving the following lemma which is a slightly more general version of lemma~\ref{keylemma:pestov}.

\begin{lemma}
\label{lma:Pestov conclusion}
Let $(M,g)$ be a non-trapping gas giant with non-positive curvature. Let $u\in \Omega$ and $Xu(z,\zeta)$ be a polynomial of order 1 w.r.t.\  $\zeta$ for all $(z,\zeta)\in S^* M$. Then $\gradv u\equiv 0$ and there exists a function $p\in C^\infty (M)$ so that $u=\pi^* p$.
\end{lemma}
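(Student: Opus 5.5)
Since $Xu$ is a polynomial of order $1$ in $\zeta$ and, by definition~\ref{def:niceness-of-uf}, $Xu\in C^\infty(S^*M)$ with the vanishing needed for the limits, corollary~\ref{cor:corollary of Pestov} applies and gives $X\gradv u=0$ on $S^*M$. (If $Xu$ also has a constant term in $\zeta$, I would first split it into its even and odd parts in $\zeta$. Only the degree-one part is used, and this is the form the identity~\eqref{eqn:sphere integral 1} needs. In our application $Xu=-\lambda f$ is already homogeneous of degree one, so nothing is lost.)

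The plan is to show that $W:=\gradv u$, a section of $N$ with $XW=0$ in the interior, vanishes identically. Fix $(z,\zeta)\in S^*M^\circ$ and follow the geodesic $\phi_t(z,\zeta)$ for $t\in[0,\tau(z,\zeta)]$; non-trapping makes $\tau$ finite. The operator $X$ acting on sections of $N$ is covariant differentiation along the geodesic flow, so $XW=0$ says that $t\mapsto W(\phi_t(z,\zeta))$ is a parallel vector field along $\gamma$. Consequently $\abs{W}_g$ is constant along the orbit.

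As $t\to\tau$ the orbit approaches $\partial S^*M$. By condition (3) of definition~\ref{def:niceness-of-uf}, $\abs{\gradv u}$ vanishes to infinite order at $x=0$ uniformly, even after converting the Euclidean norm to the $g$-norm, which costs only a factor $x^{-1}$ or $x^{-2}$; condition (5) gives $\gradv u|_{\partial S^*M}=0$ in the same direction. Since $\abs{W}_g$ is constant along the orbit and tends to $0$ at the endpoint, it vanishes on the whole orbit. Hence $\gradv u\equiv0$ on $S^*M^\circ$, and by continuity on $S^*M$.

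The remaining step is to show that vertical constancy gives $u=\pi^*p$. Each fibre $S^*_zM$ with $z\in M^\circ$ is a connected sphere (here $n\ge2$), and $\gradv u=0$ means that the derivative of $u$ along the fibre vanishes. So $u(z,\cdot)$ is constant, and we set $p(z):=u(z,\zeta)$. Then $p$ is smooth in $M^\circ$ because $u$ is, it is continuous up to the boundary with $p|_{\partial M}=0$ by (1) and (5), and it vanishes to infinite order at $\partial M$ by (1). Smoothness of $p$ up to the boundary then follows as in the proof of theorem~\ref{thm:injectivity-mod-gauge}: $\lambda\der p=X\pi^*p=Xu$ is smooth, so $\der p$ is smooth, and with $p\in x^\infty$ this gives $p\in C^\infty(M)$. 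In the $(x,y)$ structure I would check this last implication directly by integrating $\partial_xp$ from $x=0$.

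The main obstacle I expect is the limiting argument. It needs the conversion between $g$-norms and the coordinate norm near the boundary to be controlled uniformly along the orbit near its exit point, since $g$ degenerates there, and it needs the identification of $X$ on $N$ with parallel transport to remain valid right up to the boundary. I would handle both by working only on interior orbits and letting $t\uparrow\tau$, using the $x^\infty$ bound to absorb every blow-up factor.
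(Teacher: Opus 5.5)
Your proposal is correct and follows essentially the same route as the paper. You apply corollary~\ref{cor:corollary of Pestov} to get $X\gradv u=0$, carry $\gradv u$ along the orbit to its exit point where it vanishes, and then read off $u=\pi^*p$, with smoothness of $p$ coming from smoothness of $Xu=\lambda\der p$. Your parallel-transport version (constancy of $\abs{\gradv u}_g$ along interior orbits, with the limit $t\uparrow\tau$ controlled by the $x^\infty$ bound in condition (3)) is in fact a more careful rendering of the paper's fundamental-theorem-of-calculus step. That step tacitly treats the section $\gradv u$ of $N$ as a scalar and evaluates it on $\partial S^*M$, where $g$ degenerates.
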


\begin{proof}
    Let $(z,\zeta)\in  S^* M^\circ$. Since $u\in \Omega$ then $\gradv u|_{\partial S^* M}=0$ and by smoothness of the flow and corollary~\ref{cor:corollary of Pestov} we have
\begin{equation}
\begin{split}
    0
    &=
    \int_0^{\tau(z,\zeta) }X\gradv u(\phi_t(z,\zeta))\der t
    \\
    &=
    \gradv u(\phi_{\tau(z,\zeta)}(z,\zeta))-\gradv u(z,\zeta)
    \\
    &=
    0-\gradv u(z,\zeta).
\end{split}
\end{equation}
    Combining this with the boundary value $\gradv u|_{\partial S^* M}=0$ then shows that $\gradv u\equiv 0$.
    
    Next observe that since $\gradv u \equiv 0$ then $u$ is independent of $\zeta\in S^*_zM$ for each $z\in M$. Write $u(z,\zeta)=p(z)$ for a scalar function $p$. By the assumption $u\in \Omega$ we have that $Xu\in C^\infty (S^* M)$ so 
    \begin{equation}
    Xu=X\pi^* p=\lambda\der p
    \end{equation}
    and hence also $\lambda\der p\in C^\infty (S^* M)$. If $p$ is not smooth then $\lambda \der p$ is not smooth so we must have $p\in C^\infty(M)$.
\end{proof}

\appendix

\section{Coordinate formulas for derivatives on the unit cosphere bundle}
\label{sec:appendix}

We derive the coordinate formulas on a smooth Riemannian manifold $(M,g)$ where the metric $g$ is smooth up to the boundary. From the derivation we see that some of the formulas degenerate up to the boundary in gas giant geometry so the coordinate formulas are well-defined only in the interior $M^\circ$ of a gas giant. This section aims to be partially a cotangent bundle counterpart of~\cite[Appendix A]{PSU2015} so we will omit all the details which follow the same proof as in the case of the tangent bundle, cf.\ also~\cite{Pat1999} for further details.

Let $z$ be a system of local coordinates on $M$. Denote the local coordinates of $T^* M$ by $(z,\zeta)$, i.e.\ the covectors can be written as 1-forms $\zeta_\mu \der z^\mu$. By taking any arbitrary path in $T^* M$ and calculating its tangent vectors at a point we get the induced local coordinates $(z,\zeta,Z,\Zeta)$ in $T(T^* M)$, i.e.\ vectors of $T(T^* M)$ are written as $Z^\mu \partial_{z^\mu } +\Zeta_\mu \partial_{\zeta_\mu}$. We introduce the vector fields $\delta_{z^\mu}=\partial_{z^\mu }+\Gamma^\nu_{\mu\lambda}\zeta_\nu \partial_{\zeta_\lambda}$. The Sasaki metric on $T^* M$ is expressed by
\begin{align}
\label{eqn:Sasaki metric}
    \langle Z^\mu \delta_{z^\mu}+\Zeta_\mu \partial_{\zeta_\mu },\, \Tilde{Z}^\mu \delta_{z^\mu}+\Tilde{\Zeta}_\mu \partial_{\zeta_\mu}\rangle=g_{\mu\nu}Z^\mu\Tilde{Z}^\nu+g^{\mu\nu}\Zeta_\mu \Tilde{\Zeta}_\nu.
\end{align}
By~\cite{Mok1977} the horizontal and vertical subbundles of $T^* M$ are spanned by $\{\delta_{z^\mu}\}_{\mu=0}^{n-1}$ and $\{ \partial_{\zeta_\mu}\}_{\mu=0}^{n-1}$ respectively.

The hypersurface $S^* M$ in $T^* M$ is given by $S^* M=f^{-1} (1)$ where $f \colon T^* M\to \R $ is the function $f(z,\zeta)=g^{\mu\nu}(z)\zeta_\mu\zeta_\nu$.

A calculation similar to the one done for the case of tangent bundle gives us 
\begin{equation}
\der f (Z^\mu \delta_{z^\mu }+\Zeta_\mu \partial_{\zeta_\mu})=2g^{\mu\nu}\zeta_\mu\Zeta_\nu
\end{equation}
By this calculation we may define
\begin{equation}
TS^* M=\{ Z^\mu \delta_{z^\mu}+\Zeta_\mu \partial_{\zeta_\mu}\in TT^* M\mid (z,\zeta)\in S^* M,\ g^{\mu\nu }\zeta_\mu \Zeta_\nu =0\}
\end{equation}
Combining the definition of $TS^* M$ with the Sasaki metric in equation~\eqref{eqn:Sasaki metric} gives the Sasaki metric on $S^* M$. Since we can identify the quantity $g^{\mu\nu}\Zeta_\nu $ as the components of a vector field on $M$, i.e.\ $v:=g^{\mu\nu}\Zeta_\nu\partial_{z^\mu}$, then in the definition of $TS^* M$ we may write
\begin{equation}
g^{\mu\nu}\zeta_\mu \Zeta_\nu =\zeta_\mu v^\mu =\zeta(v).
\end{equation}
We define the normal bundle $N\to S^* M$ by 
\begin{equation}
\label{eqn:normal bundle}
N_{(z,\zeta)}=\{ v\in T_z M\ \colon \ \zeta(v) =0\}
\end{equation}
With this discussion we see that locally we have the identifications for horizontal and vertical distributions $\mathcal{H}\to N$ and $\mathcal{V}\to N$ by $Z^\mu \delta_{z^\mu}\mapsto Z^\mu \partial_{z^\mu}$ and $\Zeta_ \mu \partial_{\zeta_ \mu }\mapsto g^{\mu\nu} \Zeta_\mu \partial_{z^\nu}$ respectively. The latter mapping should be interpreted as the change of basis $\partial_{\zeta_\mu}\mapsto g^{\mu\nu}\partial_{z^\nu}$, since $\mathcal{V}\to N$ is given by the natural isomorphism $K_{(z,\zeta)}$ for $(z,\zeta)\in T^* M$, the local version of the connection map defined in section~\ref{sec:preliminaries}.

Define $p\colon T^* M\setminus 0\to S^* M$ by $p(z,\zeta)=(z,\zeta/\abs{\zeta}_g)$. We extend the derivatives $\delta_{z^\mu}$ and $\partial_{\zeta_\mu}$ of a function $u\in C^\infty (T^* M)$ from $T^* M$ to $S^* M$ by 
\begin{equation}
\delta_{z^\mu } u =\delta_{z^\mu } (u\circ p)|_{S^* M}\teksti{and} \partial_{\zeta_\mu } u=\partial_{\zeta_\mu }(u\circ p)|_{S^* M}.
\end{equation}
We calculate the vertical gradient by considering the restriction $u_z=u|_{S^*_z M}$. Use $p$ to extend $u_z$ from $S^*_z M$ to $T^*_z M$ so that the differential is
\begin{equation}
\der (u_z\circ p)=\partial_{\zeta_\mu }(u(z,\zeta))\der \zeta_\mu =\partial_{\zeta_\mu }u \der \zeta_\mu 
\end{equation}
and the gradient is obtained by taking the musical isomorphism
\begin{equation}
\nabla (u_z\circ p)=(g_{\mu\nu}\partial_{\zeta_\nu}u)\partial_{\zeta_\mu }.
\end{equation}
This final formula is the vertical gradient $\gradv$ on $\mathcal{V}$. Taking the identification $\mathcal{V}\to N$ via $K_{(z,\zeta)}$ for $(z,\zeta)\in T^* M$ changes the basis so that on the normal bundle $N$ the vertical gradient is given by
\begin{equation}
\label{eqn:vertical gradient local}
    \gradv u
    =
    (g_{\mu\nu}\partial_{\zeta_\mu }u)g^{\nu\lambda}\partial_{z^\lambda }
    =
    \delta^\lambda_{\ \mu}\partial_{\zeta_\mu }u\partial_{z^\lambda}
    =
    \partial_{\zeta_\mu }u\partial_{z^\mu }
\end{equation}
for $u\in C^\infty (S^* M)$.
A similar computation as in the case of the unit tangent bundle $SM$ gives that the horizontal gradient is given by
\begin{equation}
\label{eqn:horizontal gradient local}
    \gradh u
    =
    (g^{\mu\nu }\delta_{z^\nu}u -(Xu)g^{\mu\nu}\zeta_\nu )\partial_{z^\mu} 
\end{equation}
for $u\in C^\infty(S^* M)$ where $X$ is the Hamiltonian vector field of the geodesic flow. By construction we have 
\begin{equation}
\gradv u(z,\zeta) \in N_{z,\zeta}
\quad\text{and}\quad
\gradh u(z,\zeta)\in N_{z,\zeta}
\end{equation}
for all $(z,\zeta)\in S^* M$, i.e. the vertical and horizontal gradients are sections of the normal bundle $N$.

\bibliographystyle{abbrv}
\bibliography{references.bib}

\end{document}